%%%%%%%%
%   Locally min 1
%.--------------------------------------------
%     A T T E N T I O N    ! ! ! ! ! ! !
%%%%%%%%%%%%%%%%%%%%%%%%%%%%%%
%
% aqui se pone la fecha cada vez cuado se abre el fichero
% ----
% A Coru\~{n}a, 14 de octubre de 2009
%%%%%%%%%%%%%%%%%%%%%%%%%%

\documentclass[a4paper,english]{article}%[15pt,psamsfonts]
\usepackage{amssymb,amsmath}
%\usepackage{color}
%%%%%%%%%%%%%%%%%%
% Dikran's format
%%%%%%%%%%%%%%%%%%%%%%%%%%%%
%\headheight=0in
%\headsep =-0.45in
%\topmargin=0in
%\textwidth=7.65in
%\textheight=10.6in
%\oddsidemargin=-0.65in
%\evensidemargin=-0.65in
%%%%%%%%%%%%%%%%
% Spanish format
%%%%%%%%%%%%%%%%%%%%%%%%%%%%%%%%%%%%%%
%\headheight=0in \headsep =-0.45in \topmargin=0in \textheight=10.35in
%\textwidth=7.45in
%%\textheight=8.7in\textwidth=6.3in
%\oddsidemargin=-0.55in \evensidemargin=-0.55in
%\parindent=0.2in
%%

%\headheight=0in \headsep =-0.45in \topmargin=0in \textheight=9.9in
%\textwidth=7.45in
%%\textheight=8.7in\textwidth=6.3in
%\oddsidemargin=-0.55in \evensidemargin=-0.55in
%\parindent=0.12in
%

\headheight=0in \headsep =-0.1in \topmargin=0in \textheight=9.9in
\textwidth=7in
\oddsidemargin=-0.1in \evensidemargin=-0.1in
\parindent=0.2in

\makeindex

\input xy  \xyoption{all}

%----- Environments-----------------------
\newtheorem{deff}{Definition}[section]
\newtheorem{lemma}[deff]{Lemma}
\newtheorem{theorem}[deff]{Theorem}
%[subsection]
\newtheorem{corollary}[deff]{Corollary}
%[subsection]

\newtheorem{proposition}[deff]{Proposition}

\newtheorem{facts}[deff]{Facts}
\newtheorem{em-example}[deff]{Example}
\newtheorem{em-def}[deff]{Definition}        % definition(auxiliary)
\newtheorem{em-remark}[deff]{Remark}         % remark(auxiliary)
\newtheorem{em-question}[deff]{Question}

%[subsection]
\newtheorem{claim}[deff]{Claim}

\newenvironment{example}{\begin{em-example} \em }{ \end{em-example}}
\newenvironment{definition}{\begin{em-def} \em  }{ \end{em-def}}
\newenvironment{remark}{\begin{em-remark} \em }{\end{em-remark}}
\newenvironment{question}{\begin{em-question}\em }{\end{em-question}}
\newenvironment{proof}{\noindent {\it Proof}.}{\QED \smallskip}
%{\medskip}
%\newtheorem{problem}{\begin{em-problem} \em  }{ \end{em-problem}}

%%%%%%%%%%%%%%%%%%%%%%%%%  MACROS  %%%%%%%%%%%%%%%%%%%%%%%%
\newcommand{\ol}[1]{\overline{#1}}
\newcommand{\eps}{\varepsilon}
\newcommand{\wt}[1]{\widetilde{#1}}
\newcommand{\dis}{\displaystyle}

\newcommand{\fP}{\mathfrak {P}}
\newcommand{\fZ}{\mathfrak {Z}}

\newcommand\QED{\hfill QED \medskip}

\newcommand{\bcal}{{\cal B}}

\newcommand{\ncal}{{\cal {N}}}

\newcommand{\tcal}{\cal {T}}

%%%%%%%%%%%%%%%%% M M %%%%%%%%%%%%%%%%%%%%%%%

\def\hull#1{\langle#1\rangle}
\def\eps{\varepsilon}

\def\ker{\mathop{\rm ker}}

\def\:{\nobreak \hskip .1111em\mathpunct {}\nonscript \mkern
-\thinmuskip {:}\hskip .3333emplus.0555em\relax}

\def\nbd{neighborhood}
\catcode`\@=12
\def\T{{\mathbb T}}

\def\Z{{\mathbb Z}}
\def\N{{\mathbb N}}
\def\R{{\mathbb R}}
\def\Q{{\mathbb Q}}
\def\P{{\mathbb P}}

\def\Prm{\P}

\def\cont{\mathfrak c}

%%%%%%%%%%%%%%%%%%%%%%%%%%%%%%%%%%%

\begin{document}
\title{{ Locally minimal topological groups} \thanks{The first named author was partially supported by MTM 2008-04599. The third author   was partially supported by SRA, grants P1-0292-0101 and J1-9643-0101. The other authors were partially supported by MTM 2006-03036 and FEDER funds.}
}
\author{
Lydia Au\ss enhofer \\{\small\em Institut f\"{u}r Algebraische Geometrie,} {\small\em Welfengarten 1
D-30167 Hannover,} {\small\em e-mail: aussenhofer@maphy.uni-hannover.de}\\
 M. J. Chasco \\ {\small\em Dept. de F\'{\i}sica y Matem\'{a}tica Aplicada,} 
{\small\em Universidad  de Navarra,} {\small\em e-mail: mjchasco@unav.es}\\
 Dikran Dikranjan \\{\small\em Dipartimento di Matematica e Informatica,} {\small\em Universit\`{a} di Udine,} {\small\em e-mail: dikranja@dimi.uniud.it}\\
 Xabier Dom\'{\i}nguez\\ {\small\em Departamento de M\'{e}todos Matem\'{a}ticos y de
Representaci\'{o}n,} {\small\em Universidad  de A Coru\~{n}a,} {\small\em e-mail: xdominguez@udc.es}
}
\date{October 15, 2009}
\maketitle
\begin{abstract}
%%%%%%%%%%%%%%%%%%%%%%%
%footnote{ Algunas cosas a tener en cuenta antes de mandar el art\'{\i}culo a publicar:
%begin{enumerate}
%item Cuando tengamos el macro de la revista, poner las direcciones. Recordar que la de Lydia es: Instituto de
%atem\'aticas - Universidad Jaume I - E-12071 Castell\'on.
%item Tambi\'en cuando sepamos la revista: Unificar el estilo de las referencias.
%item A riesgo de ser pesado, las referencias al libro de Dikran-Prodanov-Stoyanov
%eber\'{\i}an referir al resultado concreto o al menos a una p\'agina. (Algunos he encontrado, otros
%o; entonces he marcado el sitio con $\spadesuit $ and a corresponging footnote)
%item Comprobar las referencias cruzadas, porque algunos resultados han ``ascendido" de categor\'{\i}a.
%item No olvidarse de incluir a Elena en los agradecimientos.
%\item Decidirse entre ``neighborhood" y ``neighbourhood". Hecho: neighborhood (hice 9 cambios).
%end{enumerate}}

%%%%%%%%%%%%%%%%%%%%%%%
The aim of this paper is to go deeper into the study of local minimality and its connection
to some naturally related properties.
A Hausdorff topological group $(G,\tau)$ is called locally minimal if there exists a neighborhood $U$ of $0$ in $\tau$ such that $U$ fails to
 be a neighborhood of zero in any Hausdorff group topology on $G$ which is
 strictly coarser than $\tau.$ Examples of locally minimal groups are all
subgroups of Banach-Lie groups, all locally compact groups and all minimal groups.

%We extend the minimality criterion for dense subgroups of compact
%groups to local minimality.  Using this criterion
% we characterize the compact
%abelian groups containing dense countable locally minimal subgroups,
%as well as those
%  containing dense locally minimal subgroups of countable free-rank.
%  We  also characterize the compact abelian groups whose torsion part is dense
%  and locally minimal.

Motivated by the fact that locally compact NSS groups are Lie groups, we study the connection between local minimality
and the NSS property, establishing that under certain conditions, locally minimal  NSS groups are metrizable.

A symmetric subset of an abelian group  containing zero is said to be a GTG
set if it generates a group topology in an analogous way as convex and symmetric
subsets are unit balls for pseudonorms on a vector space. We consider topological
groups which have a neighborhood basis at zero consisting of GTG sets. Examples of
these locally GTG groups are: locally pseudo--convex spaces, groups uniformly free from small subgroups (UFSS groups) and locally compact abelian groups.
%$\spadesuit $\footnote{New; infinite products of locally GTG groups need not be loc. GTG}.
The precise relation between these classes of groups is obtained: a topological abelian group is UFSS if and only if it is
locally minimal, locally GTG and NSS.
We develop a universal construction of GTG sets  in arbitrary non-discrete metric abelian groups, that
generates a strictly finer non-discrete UFSS topology and we characterize the metrizable abelian groups admitting a strictly finer non-discrete
UFSS group topology.

Unlike the minimal topologies, the locally minimal ones are always available on ``large" groups. To support this line,
we prove  that a bounded abelian group $G$ admits
a non-discrete locally minimal and locally GTG group topology iff $|G|\geq \cont$.

%%%%%%%%%%%%%%%%%%%%%%%%%%%%
Keywords: locally minimal group, minimal group, group without small subgroups, group  uniformly free from small
subgroups,
%locally essential subgroup, essential subgroup,
pseudo--convex set, GTG set, locally GTG group, locally bounded group, bounded group.
%%%%%%%%%%%%%%%%%%%%%%%%%%%
%\NB\footnote{ Please check the MSC codes and the keywords, and edit them as you see convenient.
%Please, add some code on (pseudo)convexity or similar. {\sl Done: 52A30 Variants of convex sets (star-shaped, ($m, n$)-convex, etc.)}}
%%%%%%%%%%%%%%%%%%%%%%%%%%%%

% \PACS{PACS code1 \and PACS code2 \and more}

MSC 22A05, 22B05, 54H11, 52A30
\end{abstract}

%\tableofcontents

%%%%%%%%%%%%%%%%%%%%%%%%%%%%
\section{Introduction}
%\subsection{Main results}
%%%%%%%%%%%%%%%%%%%%%%%%%%%%%%%

Minimal topological spaces have been largely studied in the literature (\cite{BPS}). Minimal topological groups were introduced  independently by Choquet, Do\" \i tchinov \cite{Do1} and Stephenson \cite{St}: a Hausdorff topological group $(G,\tau)$ is called minimal
 if there exists no Hausdorff group topology on $G$ which is strictly coarser than $\tau$.
The major problem that determined the theory of minimal abelian groups
was establishing {\em precompactness} of the abelian minimal groups (Prodanov-Stoyanov's theorem
\cite[Theorem 2.7.7]{DPS}; for recent advances in this field see
\cite{D,D1,DPS}).

Generalizations of minimality were recently proposed by various authors. Relative minimality and co-mini\-ma\-lity were introduced by Megrelishvili in \cite{meg}
(see also \cite{DMeg, Menny}). The  notion of local minimality (see Definition \ref{def:loc:min}) was introduced by Morris and  Pestov in \cite{MP} (see also  Banakh \cite{TaB}). A stronger version of this notion was used in \cite{DM} to  characterize the locally compact subgroups of infinite products of locally compact groups.

   We start Section \ref{local_minimality} with some permanence properties of local minimality (with respect to taking closed or open subgroups).
We prove in Theorem \ref{nw} that $nw(G)=w(G)$ for every locally minimal group (in particular, all countable locally minimal groups are metrizable). Subsection \S \ref{subsection_NSS_groups} is dedicated to the NSS groups. Let us recall, that a  topological group $(G,\tau)$ is called {\em NSS group} (resp.,  {\em NSnS group}) if   a suitable zero neighborhood contains only the trivial (resp., normal) subgroup. The relevance of the NSS property comes from the fact that it characterizes the Lie groups within the class of locally compact groups.
Since local minimality generalizes local compactness, it is quite natural to investigate local minimality combined with the NSS property.
 It turns out that  locally minimal abelian NSS groups are metrizable (Proposition \ref{loc_min+NSS>metr}), which should be compared with the classical fact that locally compact NSS groups are Lie groups (hence, metrizable).
 We do not know whether ``abelian" can be removed here (cf. Question \ref{ques:NSS}).
 %A crucial role in the theory of minimal groups was played by the so called "minimality criterion", due to Banaschewski  \cite{B}, Stephenson \cite{St} and Prodanov \cite{P1}; namely,  the characterization of those dense subgroups of a minimal group that are minimal as topological
%subgroups (we recall it in Theorem \ref{Min_Crit}). In \S \ref{loc_ess_loc_min} we obtain a counterpart of this criterion (Theorem \ref{crit}) for local minimality based on the new notion of a
%{\em locally essential subgroup} (see Definition \ref{LocEsse}).  Using this local minimality criterion we characterize the compact abelian groups containing dense countable locally minimal subgroups, as well as those  containing dense locally minimal subgroups of countable
%free-rank. We also characterize the compact abelian groups whose torsion part is dense and locally minimal. All these results are based on  known counterparts in the case of minimal groups.
 %%%%%%%%%%%%%%%%%%%%%%%%%%%%

%%%%%%%%%%%%%%%%%%%%%%%%%%%%
Section \ref{section_ufss} is dedicated to a property, introduced by Enflo \cite{E} that simultaneously strengthens local minimality and the NSS property.
 A Hausdorff topological group is UFSS (Uniformly Free from Small Subgroups) if its topology is generated by a single neighborhood of zero in a natural analogous way as the unit ball of a normed space determines its topology (a precise definition is given in \ref{defenflo} below).
In Proposition \ref{Min+NSS} we show that locally minimal NSnS precompact groups are UFSS (hence minimal NSS abelian groups are UFSS).
Local minimality presents a common generalization of local compactness, minimality and UFSS. Since the latter property is not sufficiently
studied, in contrast with the former two, we dedicate \S \ref{perm_ufss} to a detailed study of the permanence properties of this remarkable class. We show in Proposition \ref{perm_prop_3}
that UFSS is stable under taking subgroups, extensions (in particular, finite products), completions and local isomorphisms.
%%%%%%%%%%%%%%%%%%%%%%%%%%%%

%%%%%%%%%%%%%%%%%%%%%%%%%%%%
In \S \ref{Lydia_Chapter} we introduce the concept of a GTG set that, roughly speaking,
is a symmetric subset $U$ of a group $G$ containing 0, with an appropriate
convexity-like property (i.~e., these sets are generalizations of the symmetric convex sets in real vector spaces,
see Definition \ref{definicion_gtg}).  A topological group is called locally GTG, if it has a base of \nbd s of 0 that are GTG sets.
Since locally precompact abelian groups, as well as UFSS groups, are locally GTG, this explains the importance of this new
class. On the other hand, minimal abelian groups are precompact,  so minimal abelian groups are both locally minimal and locally GTG.
%In other words, the class of locally minimal and locally GTG groups covers, under the same umbrella, all three classes of topological abelian groups: locally
%compact groups, minimal groups and UFSS groups (so in particular, all normed spaces). The precise relation is explained by Theorem \ref{MJX}:
We prove in Theorem \ref{MJX} that a Hausdorff abelian topological group is UFSS iff it is locally minimal, NSS and locally GTG.
According to a theorem of  Hewitt \cite{H}, the usual topologies on the
%%%%%%%%%%%%%%%%%%%%%%
%$\clubsuit$\footnote{\sl I propose to insert here: ``usual topologies on the"; done; strictly stronger included in
%the second part or the sentence.}
%%%%%%%%%%%%%%%%%%%%
group $\T$ and the group $\R$ have the property that the only strictly finer locally compact group
topologies are the discrete topologies. Since locally minimal locally GTG topologies generalize the locally compact group
topologies, it would be natural to ask whether the groups $\T$ and $\R$ admit stronger non-discrete locally minimal locally GTG topologies.
In Corollary \ref{Hewitt} we give a strongly positive answer to this question for the large class of all non-totally
disconnected locally compact metrizable abelian groups and for the stronger property of UFSS topologies.
To this end we develop, in Theorem \ref{Lydia_Theorem}, a universal construction of GTG sets  in arbitrary non-discrete metric abelian groups, that
generates a strictly finer non-discrete UFSS topology
%compact totally disconnected GTG sets in arbitrary complete  non-discrete metric abelian groups
 .

The description of the {\em algebraic structure} of locally minimal abelian groups seems to be an important problem. Its solution for the class of compact groups by the end of the fifties of the last century brought a significant development of the theory of infinite abelian groups.
This line was followed later also in the theory of minimal groups, but here the problem is still open even if
solutions in the case of  many smaller classes of abelian groups are available
(\cite[\S\S 4.3, 7.5]{D}, \cite[chapter 5]{DPS}).
Unlike the minimal topologies, the locally minimal ones are always available on ``large" groups. To support this line,
we prove in Theorem \ref{NewThm1} that a bounded abelian group $G$ admits
a non-discrete locally minimal and locally GTG group topology iff $|G|\geq \cont$
 (and this occurs precisely when $G$ admits a non-discrete locally compact
 group topology).
Analogously,  in another small group (namely, $\Z$), the non-discrete
locally minimal and locally GTG group topologies are not much more than the minimal ones (i.~e., they are either UFSS
 or have an open minimal subgroup, see Example \ref{Ex_LA1}). This line will be pursued further and in more detail in the forthcoming paper
\cite{AlmMin} where we study also the locally minimal groups that can be obtained as extensions of a minimal group via a UFSS
quotient group.

In the next diagram we collect all implications between all properties introduced so far:

%\vskip0.15in

$$%\xymatrix
{\xymatrix@!0@C4.1cm@R=1.5cm{
   \mbox{discrete} \ar@{->}[d]
 % \ar@{->}[ddrr]
   &  \mbox{compact}\ar@{->}[d]\ar@{->}[dl] \ar@{->}[dr]& &\mbox{normed spaces}
   \ar@{->}[ddl]\\
%. & .& .& .\\
  \mbox{locally compact} \ar@{->}[d] \ar@{->}[r]& \mbox{loc.min. \& loc.GTG}
  \ar@{->}[dl] & \mbox{minimal} \ar@{->}[l]& \\
   \mbox{loc. min.} \ar@{-->}[d]_{countable} %\ar@{->}[r]% \ar@{->}[dr]
  &\mbox{loc.GTG \& loc.min. \& NSS} \ar@{->}[u]\ar@{->}[l]\ar@{->}[d]\ar@{->}[r]
  %\ar@{-->}[d]_{?}
 % \ar@{->}[ddl]
 &  \mbox{UFSS} \ar@{->}[ul]\ar@{->}[l] \ar@{->}[ld]  &
     \mbox{minimal \& NSS}\ar@{->}[l]
     %_{abelian}
     \ar@{->}[ul]
    %\ar@{->}[d]
      \\%
    \mbox{metrizable} &\mbox{loc.min. \& NSS}\ar@{->}[ul]\ar@{->}[l]%_{abelian}
    &
    %\mbox{loc.min.\& NSS}  \ar@{->}[r]\ar@{->}[d]\ar@{-->}[l]_{\phantom{M}?} & \mbox{ NSS }
    \\
 %   \mbox{loc.minimal} \ar@{-->}[rr]^{\textrm{countable}} & & \mbox{metrizable}  & \\ % \ar@{->}[lu] %\ar@{->}[l]  \\
    } }$$

\vskip0.2in

All the implications denoted by a solid arrow are true for arbitrary abelian groups, those
that require some additional condition on the group are
given by dotted arrows accompanied by the additional condition  in question.
%Obviously, since (locally) GTG was defined only for abelian groups, the implications
%starting from or ending up in the central two boxes ``loc.GTG \& loc.min. \& NSS"  and
%``loc.GTG \& loc.min." should have been given as dotted and accompanied by ``abelian". This was not
%done in order to avoid useless complication of the diagram. Anyway, for the benefit of
%the reader we
We give separately in the next diagram only those arrows that are valid for all, not necessarily abelian, topological groups.
%\NB \footnote{I hope these diagram leave now no more question about what is valid in the abelian case, etc.
%Anyway, please, feel free to ask further questions. \par

%I have scaled these two diagrams in a different way to show how the difference
%may look like. We can make the second a bit smaller to avoid big divergence.
%{\sl OK but I would prefer simply one diagram for the abelian case and another one for the general case. Besides that, the arrow from ``locally compact" to ``metrizable" on the second diagram should dissapear.} }

%some of them also for non abelian groups.

\vskip0.2in
%\begin{center}
%\hbox{
$\phantom{MMMMM}%\xymatrix
{\xymatrix@!0@C5.4cm@R=2.1cm{
   \mbox{discrete} \ar@{->}[d]
 % \ar@{->}[ddrr]
   &  \mbox{compact}\ar@{->}[d]\ar@{->}[dl] \ar@{->}[dr]& & & \\
   %\mbox{normed spaces}\ar@{->}[ddl]\\
%. & .& .& .\\
  \mbox{locally compact} %\ar@{->}[d]
  \ar@{->}[r]& \mbox{loc.min.}
\ar@{-->}[dl]_{countable}
  & \mbox{minimal} \ar@{->}[l]& \\
   \mbox{metr.}
   % \ar@{-->}[d]_{countable} %\ar@{->}[r]% \ar@{->}[dr]
  &\mbox{loc.min. \& NSS} \ar@{->}[u] \ar@{->}[l]^{???}
  %\ar@{->}[l] \ar@{->}[d]\ar@{->}[r]
  %\ar@{-->}[d]_{?}
 % \ar@{->}[ddl]
 &  \mbox{UFSS} \ar@{->}[ul]\ar@{->}[l] %\ar@{->}[ld]
  & \\
 %    \mbox{minimal \& NSS}\ar@{->}[l]\ar@{->}[ul]
    %\ar@{->}[d]
    %\mbox{metrizable}
    &\mbox{loc.min. \& NSnS \& precompact}\ar@{->}[ur]\ar@{->}[u]\ar@{->}[ul]&
    %\mbox{loc.min.\& NSS}  \ar@{->}[r]\ar@{->}[d]\ar@{-->}[l]_{\phantom{M}?} & \mbox{ NSS }
    \\
 %   \mbox{loc.minimal} \ar@{-->}[rr]^{\textrm{countable}} & & \mbox{metrizable}  & \\ % \ar@{->}[lu] %\ar@{->}[l]  \\
    } }$
%}

%\end{center}

\vskip0.2in

%
%$$%\xymatrix
%{\xymatrix@!0@C4.2cm@R=1.6cm{
%   \mbox{discrete}  \ar@{->}[d]  \ar@{->}[ddrr]  &  \mbox{compact}\ar@{->}[dl] \ar@{->}[dr]& &\\
%%. & .& .& .\\
%  \mbox{locally compact} \ar@{->}[dr]  \ar@{->}[d]& & \mbox{minimal}\ar@{->}[dl] & \mbox{minimal \& NSS}\ar@{->}[d]\ar@{->}[dl]\ar@{->}[l] \\
%   \mbox{complete loc.q.c. loc.min.} \ar@{->}[r]% \ar@{->}[dr]
%    \ar@{->}[d]&\mbox{almost minimal} \ar@{-->}[d]_{?}\ar@{->}[ddl] &  \mbox{UFSS}\ar@{->}[r]\ar@{->}[l]\ar@{->}[dr] \ar@{->}[d]\ar@{->}[ld]  &
%    \mbox{minimal \& NSS}\ar@{->}[d]\ar@{->}[ul]\ar@{->}[l]
%    \mbox{loc.GTG \& loc.min. \& NSS}\ar@{->}[l] \ar@{->}[d]   \\%
%    \mbox{loc.q.c. \& loc.min.}\ar@{->}[r]\ar@{->}[d]  &\mbox{loc.min. \& loc.GTG}\ar@{->}[dl]&\mbox{loc.min.\& NSS}  \ar@{->}[r]\ar@{->}[d]\ar@{-->}[l]_{\phantom{M}?} & \mbox{ NSS }\\
%    \mbox{loc.minimal} \ar@{-->}[rr]^{\textrm{countable}} & & \mbox{metrizable}  & \\ % \ar@{->}[lu] %\ar@{->}[l]  \\
%    } }$$

\paragraph{Notation and terminology}

The subgroup generated by a subset $X$ of a group $G$ is denoted by
$\hull{X}$, and $\hull{x}$ is the cyclic subgroup of $G$ generated
by an element $x\in G$. The abbreviation $K\leq G$ is used to denote
a subgroup  $K$ of $G$.

We use additive notation for a not
necessarily abelian group, and denote by $0$ its neutral element.
We denote by $\N$, $\N_0$ and $\Prm$ the sets of positive natural numbers, non-negative
integers
and primes, respectively; by $\Z$ the integers, by $\Q$ the
rationals, by $\R$ the reals, and by $\T$ the unit circle group
which is identified with $\R/\Z$. The cyclic group of order $n>1$ is
denoted by $\Z(n)$. For a  prime $p$ the symbol $\Z(p^\infty)$
stands for the  quasicyclic $p$-group and $\Z_p$ stands for the
$p$-adic integers.

The {\it torsion part\/} $t(G)$ of an abelian group $G$ is the set
$\{x\in G: nx=0\ {\rm for\ some}\ n\in\N\}$. Clearly, $t(G)$ is a
subgroup of $G$.  For any $p\in {\mathbb P}$, the {\em $p$-primary component}
$G_p$ of $G$ is the subgroup of $G$ that consists of all $x\in G$
satisfying $p^nx=0$ for some positive integer $n$. For every
$n\in\N$, we put $G[n]=\{x\in G: nx=0\}$. We say that $G$ is {\it bounded}
if $G[n]=\{0\}$ for
some $n\in \N$. If $p\in {\mathbb P},$ the
$p$-{\it rank\/} of $G$, $r_p(G)$, is defined as the cardinality of
a maximal independent subset of $G[p]$ (see \cite[Section
4.2]{Rob}). The group $G$ is {\it divisible} if $nG=G$ for every
$n\in \N$, and {\it reduced}, if it has no divisible subgroups
beyond $\{0\}$.  The {\em free rank\/} $r(G)$ of the group $G$ is the
cardinality of a maximal independent subset of $G$. The {\em socle} of
$G,$ $Soc(G),$ is the subgroup of $G$ generated by all elements of prime
order, i. e. $Soc(G)=\bigoplus_{p\in {\mathbb P}}\,G[p].$

%Throughout this paper all topological groups are assumed to be
%Hausdorff, unless otherwise explicitly stated.
We denote by ${\cal
V}_\tau(0)$ (or simply by ${\cal V}(0)$) the filter of neighborhoods
of the neutral element $0$ in a topological group $(G, \tau)$. Neighborhoods
are not necessarily open.

For a topological group $G$ we denote by $\widetilde{G}$ the Ra\u \i
kov completion of $G$. We recall here that a group $G$ is {\it
precompact\/} if $\widetilde{G}$ is compact (some authors prefer the
term ``totally bounded").

We say a topological group $G$ {\em is linear} or {\em is linearly
topologized} if it has a neighborhood basis at $0$ formed by open subgroups.

The cardinality of the continuum $2^{\omega}$ will be also denoted by
$\cont$. The {\em weight} of a topological space $X$ is the minimal
cardinality of a basis for its topology; it will be denoted by
$w(X).$ The {\em netweight} of $X$ is the minimal cardinality of a
network in $X$ (that is, a family ${\mathcal N}$ of subsets of $X$
such that for any $x\in X$ and any open set $U$ containing $x$ there
exists $N\in {\mathcal N}$ with $x\in N\subseteq U$). The netweight
of a space  $X$ will be denoted by $nw(X)$.
The {\em
pseudocharacter} $\psi(X,x)$ of a space $X$ at a point $x$ is the
minimal cardinality of a family of open neighborhoods of $x$ whose
intersection is $\{x\};$ if $X$ is a homogeneous space, its
pseudocharacter is the same at every point and we denote it by
$\psi(X).$
The {\em Lindel\"of number} $l(X)$ of a space $X$ is the
minimal cardinal $\kappa$ such that any open cover of $X$ admits a
subcover of cardinality not greater than $\kappa.$

By a  {\em character} on an abelian topological group $G$ it is
commonly understood a continuous homomorphism from $G$ into the unit
circle group  $\mathbb{T}$. %%%%%%%%%%%%%%%%%%%%%%%%%%%%

 Let $U$ be a symmetric subset of a group $(G,+)$ such that $0\in U,$ and $n\in \N$. We define
 $(1/n)U: = \{x\in G\,:\, kx\in U \;\forall k\in\{1,2,\cdots, n\}\}$ and
 $U_\infty:=\{x \in G\,:\, nx \in U \;\forall n\in \N\}.$

%%%%%%%%%%%%%%%%%%%%%%%%%%%%
Recall that a nonempty subset $U$ of a real vector space is {\em starlike} whenever $[0,1] U \subseteq U.$ Note that if $U$ is starlike and symmetric then $(1/n)U=\displaystyle \frac{1}{n}U$; in general, for symmetric $U$: $\displaystyle(1/n)U=
 \bigcap_{k=1}^n \frac{1}{k}U.$

All unexplained topological terms can be found in \cite{Eng}. For
background on abelian groups, see \cite{Fuc} and \cite{Rob}.

\section{Local minimality }\label{local_minimality}

\subsection{The notion of a locally minimal topological group}

In this section we recall the definition and basic examples of locally
minimal groups, and prove that for locally minimal groups the weight and the netweight coincide.

\begin{definition}\label{def:loc:min} A Hausdorff
topological group $(G, \tau)$ is {\em locally minimal} if there exists a
neighborhood $V$ of $0$ such that whenever $\sigma\leq \tau$  is a Hausdorff
group topology on $G$ such that $V$ is a $\sigma$-neighborhood of $0$, then
$\sigma=\tau$. If we want to point out that the neighborhood $V$ witnesses local
minimality for $(G,\tau)$ in this sense, we say that $(G,\tau)$ is
{\em $V$--locally minimal.}
\end{definition}

\begin{remark}  As mentioned in \cite{DM}, one obtains an equivalent definition replacing ``$V$ is a $\sigma$-neighborhood of $0$" with ``$V$ has a
non-empty $\sigma$-interior" above.

It is easy to see that if local minimality of a group $G$ is witnessed by some $V\in {\cal V}_\tau(0)$, then every smaller $U\in
{\cal V}_\tau(0)$ witnesses local minimality of $G$ as well.
\end{remark}

\begin{example}\label{Exa_LocMin}
Examples for locally minimal groups:
\begin{itemize}
\item[(a)] If $G$ is a minimal topological group, $G$ is locally
minimal [$G$ witnesses local minimality of $G$].
\item[(b)] If $G$ is a locally compact group, $G$ is locally minimal
[every compact neighborhood of zero witnesses local minimality of $G$,
{\rm \cite{DM}}].
   \item[(c)] It is easy to check that a normed space  $(E,\tau)$ with unit
ball $B$ is $B$-locally minimal.
\end{itemize}
\end{example}

We start with some permanence properties of locally minimal groups.

\begin{proposition} \label{locally minimal open subgroup}A group having an open locally minimal
subgroup is locally minimal.
\end{proposition}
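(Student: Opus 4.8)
The plan is to show that if $H$ is an open locally minimal subgroup of $(G,\tau)$, then a neighborhood of $0$ witnessing local minimality of $H$ also witnesses local minimality of $G$. First I would fix $V\in {\cal V}_{\tau|_H}(0)$ witnessing that $H$ is $V$-locally minimal; since $H$ is $\tau$-open, $V$ is also a $\tau$-neighborhood of $0$ in $G$. I claim that $(G,\tau)$ is $V$-locally minimal.

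To verify the claim, let $\sigma\leq \tau$ be a Hausdorff group topology on $G$ for which $V$ is a $\sigma$-neighborhood of $0$. Because $V\subseteq H$, the subgroup $H$ has non-empty $\sigma$-interior, and a subgroup with non-empty interior is open; hence $H$ is $\sigma$-open. Consider now the restrictions to $H$: since $\sigma\leq \tau$ on $G$ we have $\sigma|_H\leq \tau|_H$, both are Hausdorff group topologies on $H$, and $V$ remains a $\sigma|_H$-neighborhood of $0$. Local minimality of $H$ (witnessed by $V$) then forces $\sigma|_H=\tau|_H$.

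It remains to upgrade this agreement on $H$ to an equality of the two topologies on all of $G$. This is the step requiring the most care, and it rests on the general fact that a group topology possessing an open subgroup is completely determined by the topology it induces on that subgroup: since $H$ is $\tau$-open, the $\tau$-neighborhoods of $0$ contained in $H$ form a base of ${\cal V}_\tau(0)$, and these are exactly the $\tau|_H$-neighborhoods of $0$; symmetrically, as $H$ is $\sigma$-open, the $\sigma|_H$-neighborhoods of $0$ form a base of ${\cal V}_\sigma(0)$. Because $\tau|_H=\sigma|_H$, the filters ${\cal V}_\tau(0)$ and ${\cal V}_\sigma(0)$ share a common base and therefore coincide; since a group topology is determined by its neighborhood filter at $0$, we conclude $\sigma=\tau$. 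This establishes $V$-local minimality of $G$, and the main subtlety is precisely the passage from the subgroup to the whole group in this last paragraph, everything else being a routine transfer of the witness $V$ between $H$ and $G$.
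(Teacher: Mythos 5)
Your proof is correct and follows essentially the same route as the paper's: transfer the witnessing neighborhood $V$ from $H$ to $G$, note that $H$ is $\sigma$-open because it contains the $\sigma$-neighborhood $V$, invoke local minimality of $H$ to get $\sigma|_H=\tau|_H$, and conclude $\sigma=\tau$ since both topologies are determined by their restriction to the common open subgroup $H$. The only difference is that you spell out this last step in more detail than the paper does.
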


\begin{proof} Let $H$ be a locally minimal group witnessed by $U\in{\cal V}_H(0)$ and suppose
that $H$ is an open subgroup of the Hausdorff group $(G,\tau)$.
Then $U$ is a neighborhood of $0$ in $G$. Assume that $\sigma$ is
a Hausdorff group topology on $G$ coarser than $\tau$ such that
$U$ is a neighborhood of $0$ in $(G,\sigma)$. Then $\tau|_H\ge
\sigma|_H$ and since $U$ is a neighborhood of $0$ in
$(H,\sigma|_H)$, we obtain $ \tau|_H=  \sigma|_H$. Since $U$ is a
neighborhood of $0$ in $(G,\sigma)$, the subgroup $H$ is open in
$\sigma$ and hence $\sigma=\tau$.
\end{proof}

In the other direction we can weaken the hypothesis ``open subgroup" to the much weaker ``closed subgroup",
but we need to further impose the restraint on $H$ to be central.

\begin{proposition}\label{subgr}
Let $G$ be a locally minimal group and let $H$ be a closed central subgroup of $G$.
Then $H$ is locally minimal.
\end{proposition}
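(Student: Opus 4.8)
The plan is to derive local minimality of $H$ from that of $G$ by \emph{lifting} any candidate coarser topology on $H$ to a coarser Hausdorff group topology on all of $G$, to which we can then apply local minimality of $G$. Fix $V\in{\cal V}_\tau(0)$ witnessing local minimality of $G$ and choose $V_1\in{\cal V}_\tau(0)$ with $V_1+V_1\subseteq V$. I claim that $U:=V_1\cap H$ witnesses local minimality of $H$. Accordingly, let $\sigma_H\leq\tau|_H$ be a Hausdorff group topology on $H$ for which $U$ is a $\sigma_H$-neighborhood of $0$; the goal is to prove $\sigma_H=\tau|_H$.

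The core of the argument is to build a group topology $\sigma$ on $G$ having
$$\{\,W+O\ :\ W\in{\cal V}_\tau(0),\ O\in{\cal V}_{\sigma_H}(0)\,\}$$
as a base of neighborhoods of $0$. Here the assumption that $H$ is \emph{central} is exactly what makes the neighborhood-base axioms for a group topology go through: since every element of $O\subseteq H$ commutes with everything in $G$, one obtains identities such as $(W'+O')-(W'+O')=(W'-W')+(O'-O')$ and $g+(W'+O')-g=(g+W'-g)+O'$, which reduce the verification to the separate, routine facts that $\tau$ and $\sigma_H$ are group topologies (taking $W$, $O$ to range over open, symmetric members handles the remaining local axiom).

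Next I would record three properties of $\sigma$. First, $\sigma\leq\tau$, because $W=W+0\subseteq W+O$ shows every basic set $W+O$ is a $\tau$-neighborhood of $0$. Second, $V$ is a $\sigma$-neighborhood of $0$: taking $W=V_1$ and $O=U=V_1\cap H$ (a $\sigma_H$-neighborhood by hypothesis) gives $W+O\subseteq V_1+V_1\subseteq V$. Third --- and this is where closedness of $H$ is used --- $\sigma$ is Hausdorff, i.e.\ $\bigcap_{W,O}(W+O)=\{0\}$. For $g\in H\setminus\{0\}$: if $g\in W+O$ then, writing $g=w+o$ with $o\in O$, the component $w=g-o$ lies in $W\cap H$, so letting $W$ shrink forces $g\in\overline{O}^{\,\tau|_H}\subseteq\overline{O}^{\,\sigma_H}$; hence $g\in\bigcap_O\overline{O}^{\,\sigma_H}=\{0\}$ by Hausdorffness of $\sigma_H$, a contradiction. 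For $g\notin H$: closedness of $H$ yields a symmetric $W_0\in{\cal V}_\tau(0)$ with $(g+W_0)\cap H=\emptyset$, whence $g\notin H+W_0\supseteq W_0+O$ (using $W_0+H=H+W_0$, again by centrality), so some basic $\sigma$-neighborhood misses $g$. Since $\sigma$ is now a Hausdorff group topology with $\sigma\leq\tau$ for which $V$ is a neighborhood of $0$, local minimality of $G$ forces $\sigma=\tau$.

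It remains to identify the trace of $\sigma$ on $H$ with $\sigma_H$, for then $\tau|_H=\sigma|_H=\sigma_H$ completes the proof. On one hand $O\subseteq(W+O)\cap H$ shows $\sigma|_H\leq\sigma_H$. On the other hand, given $O_0\in{\cal V}_{\sigma_H}(0)$, choose $O\in{\cal V}_{\sigma_H}(0)$ with $O+O\subseteq O_0$; as $\sigma_H\leq\tau|_H$, the set $O$ is a $\tau|_H$-neighborhood, so there is $W'\in{\cal V}_\tau(0)$ with $W'\cap H\subseteq O$, and then $(W'+O)\cap H\subseteq(W'\cap H)+O\subseteq O+O\subseteq O_0$, giving $\sigma_H\leq\sigma|_H$. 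I expect the main obstacle to be precisely the Hausdorffness of $\sigma$: that is the one place the closedness hypothesis genuinely enters, and the two-case computation above is what forces it, while everything else is bookkeeping driven by the centrality of $H$.
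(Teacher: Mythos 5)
Your proposal is correct and follows essentially the same route as the paper's proof: lift $\sigma_H$ to a coarser group topology on $G$ with basic neighborhoods $W+O$, verify Hausdorffness via the closedness of $H$ (the paper does your two cases in one stroke, using $\overline{O}^{\tau}\subseteq\overline{O}^{\sigma_H}$ for $O\subseteq H$), and invoke local minimality of $G$ with the witness $V_1+V_1\subseteq V$. The only difference is bookkeeping: you spell out the identification $\sigma|_H=\sigma_H$, which the paper leaves implicit (and of which only the inequality $\sigma|_H\leq\sigma_H$ is actually needed).
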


\begin{proof}
Let $\tau$ denote the topology of $G$ and let $V_0\in {\cal
V}_{(G,\tau)}(0)$ witness local minimality of $(G,\tau)$. Choose
$V_1\in {\cal V}_{(G,\tau)}(0)$ such that $V_1+ V_1\subseteq V_0$.
We show that $V_1\cap H$ witnesses  local minimality of $H$.
Suppose $\sigma$ is a Hausdorff group topology on $H$ coarser than
$\tau|_H$ such that $V_1\cap H$ is $\sigma$-neighborhood of 0. It is easy to verify that the family of sets $(U+V)$
where $U$ is a $\sigma$-neighborhood
 of 0 in $H$ and $V$ is a $\tau$-neighborhood
of 0,  form a neighborhood basis of a group topology $\tau'$ on $G$
which is coarser than $\tau$. Let us prove that $\tau'$ is Hausdorff:
Therefore, observe that for a subset $A\subseteq H$ we have
$\overline{A}^\tau\subseteq \overline{A}^\sigma$, since $H$ is
closed in $\tau$. Hence we obtain $\overline{\{0\}}^{\tau'}=
\bigcap\{U+ V: U\in {\cal  V}_{(H,\sigma)}(0), V\in {\cal
V}_{(G,\tau)}\}=\bigcap_{U\in {\cal V}_{(H,\sigma)}(0)}
\bigcap_{V\in {\cal V}_{(G,\tau)}(0)}U+V= \bigcap_{U\in {\cal
 V}_{(H,\sigma)}(0)}\overline{U}^{\tau}\subseteq
\bigcap_{U\in{\cal V}_{(H,\sigma)}(0)}\overline{U}^\sigma=\{0\}$
since $\sigma$ was assumed to be Hausdorff.

Moreover, if $W\in {\cal V}_\sigma(0)$ such that $W\subseteq V_1\cap H$, then $W+ V_1\subseteq V_0$ implies that $V_0\in {\cal V}_{(G,\tau')}(0)$.
 By the choice of $V_0$ this yields
$\tau'=\tau$. Hence $\sigma=\tau$.
\end{proof}

\begin{corollary}\label{Coro_}
An open central subgroup $U$ of a topological
group $G$ is locally minimal iff $G$ itself is
locally minimal.
\end{corollary}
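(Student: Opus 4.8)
The plan is to derive this corollary directly by combining the two preceding propositions, with the only additional observation being the standard fact that an open subgroup of a topological group is automatically closed. I will treat the two implications separately.

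For the implication ``$U$ locally minimal $\Rightarrow$ $G$ locally minimal'', I would simply apply Proposition \ref{locally minimal open subgroup}: since $U$ is by hypothesis an open subgroup of $G$ which is locally minimal, $G$ has an open locally minimal subgroup and is therefore locally minimal. Note that the centrality of $U$ is not needed in this direction.

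For the converse implication ``$G$ locally minimal $\Rightarrow$ $U$ locally minimal'', I would first recall that any open subgroup of a topological group is also closed: the complement of $U$ is the union of the nontrivial cosets of $U$, each of which is open, so the complement is open and $U$ is closed. Thus $U$ is a closed central subgroup of the locally minimal group $G$, and Proposition \ref{subgr} applies to yield that $U$ is locally minimal.

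I do not expect a genuine obstacle here, since the statement is essentially the conjunction of Propositions \ref{locally minimal open subgroup} and \ref{subgr}. The one point requiring attention is that Proposition \ref{subgr} is stated for \emph{closed} central subgroups, so the argument hinges on promoting the hypothesis ``open'' to ``closed'' via the observation above; once this is in place, both implications follow immediately from the cited results.
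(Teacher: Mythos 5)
Your proposal is correct and matches the paper's intent exactly: the corollary is stated without proof precisely because it is the conjunction of Proposition \ref{locally minimal open subgroup} (for the ``only if'' direction, where centrality is indeed not needed) and Proposition \ref{subgr} (for the ``if'' direction, using that open subgroups are closed). Nothing further is required.
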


These results leave open the question on whether ``central" can be omitted in the above
corollary and Proposition \ref{subgr} (see Question \ref{LAST_QUES}).

The question whether the product of two minimal (abelian) groups is again minimal was
answered  negatively by Do\"\i tchinov in \cite{Do1}  where he
proved that $(\Z,\tau_2)\times (\Z,\tau_2)$ is not minimal although the $2$--adic topology $\tau_2$ on the integers is
minimal. We will show in Prop. \ref{Do1loc}  that $(\Z,\tau_2)\times (\Z,\tau_2)$ is not even locally minimal.

  Next we are going to see some cases where metrizability can be deduced
  from local minimality. We start with a generalization to locally minimal
  groups of the following theorem of Arhangel$'$skij: $w(G)=nw(G)$ for
  every minimal group; in particular, every
 minimal group with countable netweight is metrizable. For that we need
 the following result from
 \cite{Arh}:

\begin{lemma}\label{lemarh}
Let $\kappa$ be an infinite cardinal and let $G$ be a topological group with
\begin{itemize}
  \item[(a)] $\psi(G)\leq \kappa$;
  \item[(b)] $G$ has a subset $X$ with $\langle X \rangle =G$ and $l(X)\leq
  \kappa$.
\end{itemize}
Then for every family of neighborhoods $\bcal$ of the neutral element $0$
of $G$ with $|{\bcal}|\leq \kappa$ there exists a coarser group
topology $\tau'$ on $G$ such that $w(G,\tau')\leq \kappa$ and every $U\in
\bcal$ is a $\tau'$-neighborhood \ of $0$.
\end{lemma}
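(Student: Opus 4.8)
The plan is to build, by a Birkhoff--Kakutani-style construction, a neighborhood base $\pcal$ at $0$ of cardinality $\le\kappa$ consisting of symmetric $\tau$-open sets, and to let $\tau'$ be the group topology it generates. I would start from the family $\pcal_0$ obtained by adjoining to $\bcal$ a family of $\le\kappa$ open neighborhoods witnessing $\psi(G)\le\kappa$ (so that their intersection is $\{0\}$), and replacing each member by a symmetric open subneighborhood; this keeps $|\pcal_0|\le\kappa$. The topology $\tau'$ will automatically be coarser than $\tau$, because every set I ever put into the base is a $\tau$-neighborhood of $0$; it will be Hausdorff because the $\psi$-witnesses force $\bigcap\pcal=\{0\}$, hence $T_1$, hence (being a group topology) Hausdorff; and every $U\in\bcal$ will be a $\tau'$-neighborhood of $0$ since $\bcal\subseteq\pcal_0\subseteq\pcal$.

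The key point, and the main obstacle, is to guarantee that $\pcal$ can be chosen invariant under conjugation by all of $G$ while staying of size $\le\kappa$; in the abelian case this is vacuous, so the whole difficulty is the non-abelian bookkeeping, and this is exactly where hypothesis (b) enters. I would prove the following covering lemma: for every symmetric $\tau$-open $W\ni 0$ there are $\le\kappa$ symmetric $\tau$-open neighborhoods $\{U_\alpha\}_{\alpha<\kappa}$ such that for every $x\in X$ there is $\alpha$ with $x+U_\alpha-x\subseteq W$. For a fixed $x$, continuity of the conjugation $g\mapsto x+g-x$ at $0$ gives a symmetric open $V_x$ with $x+V_x-x\subseteq W$, and continuity of the commutator map $(s,v)\mapsto s+v-s-v$ (which vanishes whenever $s=0$ or $v=0$) yields a symmetric open $U_x$ with $U_x+U_x\subseteq V_x$ and $s+v-s-v\in U_x$ for all $s,v\in U_x$. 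Then for $y=x+s$ with $s\in U_x$ and any $v\in U_x$ one has $s+v-s=(s+v-s-v)+v\in U_x+U_x\subseteq V_x$, whence $y+v-y=x+(s+v-s)-x\subseteq x+V_x-x\subseteq W$; thus $y+U_x-y\subseteq W$ for every $y$ in the open set $N_x:=x+U_x\ni x$. The sets $\{N_x\}_{x\in X}$ cover $X$, so by $l(X)\le\kappa$ a subfamily $\{N_{x_\alpha}\}_{\alpha<\kappa}$ already covers $X$, and the neighborhoods $\{U_{x_\alpha}\}$ are as required.

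With this lemma in hand I would build $\pcal=\bigcup_{n<\omega}\pcal_n$ by closing $\pcal_0$, at each stage and for each current member $U$, under: (i) finite intersections; (ii) choosing a symmetric $V$ with $V+V\subseteq U$; and (iii) adjoining, for $W=U$, the $\le\kappa$ neighborhoods produced by the covering lemma applied both to $X$ and to $-X$. Since each step multiplies the cardinality by at most $\kappa$ and there are countably many steps, $|\pcal|\le\kappa$. The family $\pcal$ then satisfies the standard axioms for a neighborhood base at $0$ of a group topology $\tau'$; the only nontrivial axiom, continuity of all conjugations (for each $U\in\pcal$ and each $g\in G$ a $V\in\pcal$ with $g+V-g\subseteq U$), follows by writing $g$ as a word $y_1+\dots+y_m$ in $X\cup(-X)$ and composing the single-generator witnesses from (iii) along the word.

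Finally, for the weight I would invoke the relation $w(G)=\chi(G)\cdot l(G)$, valid for topological groups. By construction $\chi(G,\tau')\le|\pcal|\le\kappa$. For the Lindel\"of number, hypothesis (b) together with $\langle X\rangle=G$ lets me cover $G$ by $\le\kappa$ translates of any prescribed $\tau'$-neighborhood: one first covers $X$ and $-X$ by $\le\kappa$ translates of a small base member using $l(X)\le\kappa$, and then propagates this inductively to the sets $P_n$ of words of length $\le n$, and hence to $G=\bigcup_n P_n$, absorbing the $\le\kappa$ non-commutative correction terms that arise at each stage by means of the conjugation-invariance of $\pcal$. This gives $l(G,\tau')\le\kappa$ and therefore $w(G,\tau')\le\kappa$. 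The genuinely delicate step throughout is the conjugation control of the second paragraph; everything else is routine closure and cardinal arithmetic.
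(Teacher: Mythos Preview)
The paper does not give its own proof of this lemma: it is quoted verbatim as ``the following result from \cite{Arh}'' and used as a black box in the proof of Theorem~\ref{nw}. So there is nothing in the paper to compare your argument against; what you have written is an independent reconstruction of Arhangel$'$skij's result.

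Your overall strategy is sound and is essentially the standard one: enlarge $\bcal$ by a $\psi$-witnessing family, close off (in $\omega$ many rounds of size $\le\kappa$) under symmetry, finite intersections, ``halving'' $V+V\subseteq U$, and conjugation-by-generators, then let $\tau'$ be generated by the resulting $\pcal$. Two points deserve a small correction. First, in your covering lemma you ask for a symmetric open $U_x$ with $s+v-s-v\in U_x$ for all $s,v\in U_x$; this self-referential condition is not obtainable directly from continuity. What you can (and only need to) get is: choose $W_x$ with $W_x+W_x\subseteq V_x$, then $U_x\subseteq W_x$ with $s+v-s-v\in W_x$ for $s,v\in U_x$; then $s+v-s=(s+v-s-v)+v\in W_x+W_x\subseteq V_x$, and your argument goes through unchanged. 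Second, in the last paragraph you invoke ``$w(G)=\chi(G)\cdot l(G)$'' and then argue about covering $G$ by $\le\kappa$ translates of each neighborhood. The latter is $\kappa$-\emph{narrowness}, not $l(G,\tau')\le\kappa$; narrowness is all you actually need (and all your inductive argument over the word-length sets $P_n$ actually proves), since $\kappa$-narrow together with $\chi\le\kappa$ already yields $w\le\kappa$ by the translate-of-a-local-base argument. With these two adjustments your proof is correct.
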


\begin{theorem}\label{nw} For a locally minimal group  $(G,\tau)$ one has
$w(G)=nw(G)$. In particular, every countable locally minimal group is metrizable.
\end{theorem}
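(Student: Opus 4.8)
The plan is to prove the nontrivial inequality $w(G)\le nw(G)$, since $nw(G)\le w(G)$ holds in every space (a base is a network). Put $\kappa=nw(G)$. If $\kappa$ is finite then, $G$ being Hausdorff, a finite network forces $G$ to be finite and discrete, whence $w(G)=nw(G)$ trivially; so I may assume $\kappa\ge\omega$, which is also what Lemma \ref{lemarh} requires. The idea is to manufacture, out of a local-minimality witness, a \emph{coarser} Hausdorff group topology $\tau'\le\tau$ with $w(G,\tau')\le\kappa$, and then let local minimality collapse $\tau'$ back onto $\tau$, yielding $w(G)\le\kappa$.

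To feed Lemma \ref{lemarh} I first check its two hypotheses for $\kappa=nw(G)$. Hypothesis (b) is immediate by taking $X=G$: then $\langle X\rangle=G$ and $l(G)\le nw(G)=\kappa$, the latter being the standard inequality bounding the Lindel\"of number by the netweight. For hypothesis (a), I use that a Hausdorff topological group is Tychonoff, hence regular, and that for regular spaces $\psi(X)\le nw(X)$; thus $\psi(G)\le\kappa$. Now fix a neighborhood $V$ witnessing local minimality of $(G,\tau)$, and, using $\psi(G)\le\kappa$, fix a family $\{W_i:i\in I\}$ of open neighborhoods of $0$ with $|I|\le\kappa$ and $\bigcap_{i\in I}W_i=\{0\}$. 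Apply Lemma \ref{lemarh} to the family $\bcal=\{V\}\cup\{W_i:i\in I\}$, which has $|\bcal|\le\kappa$. This produces a group topology $\tau'\le\tau$ with $w(G,\tau')\le\kappa$ in which $V$ and every $W_i$ is a neighborhood of $0$.

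It remains to invoke local minimality, and here is the one point that needs care: Definition \ref{def:loc:min} only allows \emph{Hausdorff} competitor topologies, whereas Lemma \ref{lemarh} gives no separation guarantee. This is precisely why I folded the pseudocharacter family $\{W_i\}$ into $\bcal$: since each $W_i\in{\cal V}_{\tau'}(0)$, one gets $\bigcap\{U:U\in{\cal V}_{\tau'}(0)\}\subseteq\bigcap_{i\in I}W_i=\{0\}$, and for a topological group this intersection is exactly $\overline{\{0\}}^{\tau'}$, so $\tau'$ is Hausdorff. Thus $\tau'\le\tau$ is a Hausdorff group topology in which the witness $V$ is a neighborhood of $0$, and local minimality forces $\tau'=\tau$. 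Consequently $w(G)=w(G,\tau')\le\kappa=nw(G)$, giving $w(G)=nw(G)$. For the final assertion, a countable group has $nw(G)\le\omega$, hence $w(G)=nw(G)\le\omega$; a second countable Hausdorff topological group is first countable, so by the Birkhoff--Kakutani theorem it is metrizable. The main obstacle throughout is the Hausdorffness of the auxiliary topology $\tau'$ (so that local minimality is applicable), which the augmentation of $\bcal$ resolves; the auxiliary cardinal inequalities $l(G)\le nw(G)$ and $\psi(G)\le nw(G)$ are routine general-topology facts.
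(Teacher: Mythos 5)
Your proof is correct and follows essentially the same route as the paper's: verify the hypotheses of Lemma \ref{lemarh} with $\kappa=nw(G)$ and $X=G$, feed it a family of neighborhoods containing the local-minimality witness together with a pseudocharacter-witnessing family so that the resulting coarser topology $\tau'$ is Hausdorff, and then invoke local minimality to conclude $\tau'=\tau$. The only differences are cosmetic: you cite the standard inequalities $l(G)\le nw(G)$ and $\psi(G)\le nw(G)$ where the paper proves them directly from the network, and you spell out the Hausdorffness of $\tau'$ which the paper leaves implicit.
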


\begin{proof} Let $\kappa=nw(G)$ and let ${\mathcal N}$ be a network of $G$
 of size $\kappa$. Then also $\psi(G)\leq \kappa$ as
$$
  \bigcap\{G\setminus \ol{B}:0\not \in \ol{B},\ B\in \ncal\} =\{0\}.
$$
Moreover, the Lindel\" of number $l(G)$ of $G$ is $\leq \kappa$. Indeed, if
 $G=\bigcup_{i\in I}U_i$ and each $U_i$ is a non-empty
open set, then by the definition of a network for every $x\in G$ there exists
$i_x\in I$ and $B_x\in \ncal$ such that $x\in B_x\subseteq
U_{i_x}$. [For $z\in G$ we choose $y\in Y$
such that $B_z=B_y$ and obtain $z\in B_z=B_y\subseteq U_{i_y}$.]  Let
 $\ncal_1=\{B_x: x\in G\}$ and $Y\subseteq G$ such that the assignment
 $Y\to \ncal_1$, defined by  $Y\ni x\mapsto B_x$,
is  bijective. Then $|Y|\leq \kappa$ and $G =\bigcup_{y\in Y}U_{i_y}$.
 This proves $l(G)\leq \kappa$.

To end the proof of the theorem  apply Lemma \ref{lemarh} taking $X=G$
and any family $\mathcal B$  of size $\kappa$ of
$\tau$-neighborhoods of $0$ containing $U$ as a member and witnessing
$\psi(G)\leq \kappa$ (i.e., $\bigcap {\mathcal
B}=\{0\}$). This gives a Hausdorff topology $\tau'\leq \tau$ on $G$
satisfying the conclusion of the lemma.  By the local minimality of
$(G,\tau)$ we conclude $\tau'=\tau$. In particular, $w(G,\tau)\leq \kappa$.
Since always $nw(G)\leq w(G)$, this proves the required
equality $w(G)=nw(G)$.

Now suppose that $G$ is countable. Then $nw(G)=\omega$, so the equality
$w(G)=nw(G)$ implies that $G$ is second countable, in particular metrizable.
\end{proof}
\begin{remark}
\begin{itemize}
\item[(a)] The fact that every countable locally minimal group $(G,\tau)$  is metrizable
admits also a straightforward proof. Indeed,
let $\{x_n:\ n\in\N\}=G\setminus\{0\}$ and let $U_0$ be a \nbd\ of 0 such that
$G$ is $U_0$-locally minimal. Then there exists a sequence of symmetric neighborhoods
of zero $(U_n)$ satisfying $U_n+U_n\subseteq U_{n-1}$, $x_n\not \in U_n$,
$U_n \subseteq \bigcap_{k=1}^{n-1} (x_k +U_{n-1}-x_k)$ for all $n\in \N$. Since $\bigcap_{n=1}^\infty U_{n}=\{0\}$, the family $(U_n)$ forms
a base of \nbd s of 0 of a metrizable group topology $\sigma \leq \tau$ on
$G$ with $U_0\in \sigma$. Hence $\tau=\sigma$ is metrizable.

\item[(b)]  A similar direct proof shows that every locally minimal abelian
 group $(G,\tau)$ of  countable pseudocharacter is metrizable.
Here ``abelian" cannot be removed, since examples of minimal (necessarily
non-abelian) groups of countable pseudocharacter
and arbitrarily high character (in particular, non-metrizable) were built
by Shakhmatov (\cite{Sh}).
 \end{itemize}
\end{remark}
%%%%%%%%%%%%%%%%%%%%%%%%%%%%

%%%%%%%%%%%%%%%%%%%%%%%%%%%%
\subsection{Groups with no small (normal) subgroups}\label{subsection_NSS_groups}

In this subsection we show that groups with no small (normal) subgroups are
closely related to locally minimal groups and study some of their properties.

\begin{definition}\label{def_nsns} A topological group $(G,\tau)$ is
called {\em NSS group} (No Small Subgroups) if a suitable neighborhood
$V\in{\cal V}(0)$ contains only the trivial subgroup.

 A topological group $(G,\tau)$ is called {\em NSnS group} (No Small normal Subgroups) if a suitable neighborhood $V\in{\cal V}(0)$
 contains only the trivial normal subgroup.  \end{definition}

The distinction between NSS and NSnS will be necessary only when we
 consider non-abelian groups (or non-compact groups, see Remark \ref{cptUFSS} below).

\begin{example}\label{Zlinear} Examples for NSS and non-NSS groups.
\begin{itemize}
 \item[(a)] The unit circle $\mathbb{T}$ is a NSS group.
 \item[(b)] Montgomery and Zippin's solution to Hilbert's fifth problem asserts that every locally compact NSS group is a Lie group.
 \item[(c)] Any free abelian topological group on a metric space is a NSS group (\cite{MT}).
 \item[(d)] A dichotomy of Hausdorff group topologies on the integers: Any Hausdorff group topology $\tau$ on the integers is NSS if and only
if it is not linear. Indeed, suppose that $\tau$ is not NSS; let $U$ be a closed neighborhood of $0$. By assumption, $U$ contains a nontrivial
 closed subgroup $H$ which is of the form $n\Z$ ($n\ge 1$). Since $\Z/n\Z$ is a finite Hausdorff group, it is discrete and hence $n\Z$ is open in $\Z$. This shows that $\tau$ is linear.
\item[(e)] A group $G$ is {\em
topologically simple} if $G$ has no proper closed normal subgroups.
Every Hausdorff topologically simple group is NSnS. [Suppose that $G$ is topologically simple and Hausdorff  and let
$U\not=G$ be a closed neighborhood of $0$. Let $N$ be a normal
subgroup of $G$ contained in $U$. Then $\overline{N}$ is also a
closed subgroup of $G$ contained in $U$ and hence
$\{0\}=\overline{N}=N$. So $G$ is an NSnS group.
Actually a stronger property is true: if $G$ is Hausdorff and every closed normal
subgroup of $G$ is finite, then $G$ is NSnS (this provides a proof
of item (a)).] The infinite permutation group $G=S({\mathbb N})$ is
an example of a topologically simple group (\cite[7.1.2]{DPS}).

\end{itemize}
\end{example}

We omit the easy proof of the next lemma:

\begin{lemma}\label{NSS1}
\label{lem:NSS}
\begin{itemize}
\item[(a)]  The classes of NSnS groups and NSS groups are stable under taking finite direct products and finer group topologies.
\item[(b)]  The class of NSS groups is stable under taking subgroups.
\item[(c)] The class of NSnS groups is stable under taking dense subgroups.
%%%%%%%%%%%%%%%%%%%%%%%%%%%%

\item[(d)] No infinite product of  non-trivial groups is NSnS.
\end{itemize}
\end{lemma}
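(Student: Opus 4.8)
The plan is to handle the four items separately, each reducing to the defining neighborhood property via an elementary remark about (normal) subgroups. For (a), stability under finer topologies is immediate: if $V\in{\cal V}_\tau(0)$ witnesses the NSS (resp. NSnS) property and $\sigma\geq\tau$, then $V$ is still a $\sigma$-neighborhood of $0$ and contains exactly the same (normal) subgroups, the underlying abstract group being unchanged. For a finite product I would use the projections. If $V_1,V_2$ witness NSS for $G_1,G_2$, then $V_1\times V_2$ witnesses NSS for $G_1\times G_2$: given a subgroup $K\leq G_1\times G_2$ with $K\sub V_1\times V_2$, each $\pi_i(K)$ is a subgroup of $G_i$ contained in $\pi_i(V_1\times V_2)=V_i$, hence trivial, so $K=\{0\}$. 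The NSnS case is identical once one notes that $\pi_i$ is a surjective homomorphism, so it carries normal subgroups of $G_1\times G_2$ to normal subgroups of $G_i$. Item (b) is the simplest: if $V$ witnesses NSS for $G$ and $H\leq G$ carries the subspace topology, then any subgroup of $H$ contained in $V\cap H\in{\cal V}_H(0)$ is a subgroup of $G$ contained in $V$, hence trivial.

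The one genuinely delicate point is (c). Here I would first shrink the witnessing neighborhood to a $\tau$-closed one $V$ (always possible in a topological group; a smaller neighborhood still witnesses NSnS). Let $H$ be dense in $G$ and let $N\trianglelefteq H$ with $N\sub V\cap H$; the goal is $N=\{0\}$. The key step is that the closure $\ol N$ taken in $G$ is normal in $G$. For $h\in H$ the conjugation $x\mapsto h+x-h$ is a homeomorphism fixing $N$ setwise, hence $h+\ol N-h=\ol{h+N-h}=\ol N$; for arbitrary $g\in G$ I pick a net $(h_\alpha)$ in $H$ with $h_\alpha\to g$ and use joint continuity of $(g,x)\mapsto g+x-g$ together with the closedness of $\ol N$ to pass to the limit, obtaining $g+\ol N-g\sub\ol N$, and equality after applying the same to $-g$. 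Since $N\sub V$ and $V$ is closed, $\ol N\sub V$, so NSnS of $G$ forces $\ol N=\{0\}$ and thus $N=\{0\}$. I expect this passage from ``normal in $H$'' to ``normal in $G$'' to be the main obstacle, as it is the only place where density and the topology, rather than purely algebraic facts, are used; it is also exactly why the simple restriction argument of (b) does not transfer to normal subgroups.

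Finally, for (d) I would show that in $G=\prod_{i\in I}G_i$ with $I$ infinite and every $G_i$ non-trivial, no neighborhood of $0$ witnesses NSnS. Any $V\in{\cal V}(0)$ contains a basic neighborhood $\prod_i U_i$ with $U_i=G_i$ off some finite $F\sub I$. Then $N:=\{x\in G:\ x_i=0\ \text{for all}\ i\in F\}$ is a normal subgroup of $G$ (a product of the normal subgroups $G_i$ for $i\notin F$ and $\{0\}$ for $i\in F$), it is contained in $\prod_i U_i\sub V$, and it is non-trivial because $I\sm F\neq\emptyset$ with the corresponding factors non-trivial. Hence every neighborhood of $0$ contains a non-trivial normal subgroup, so $G$ is not NSnS.
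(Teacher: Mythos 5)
Your proof is correct; the paper itself omits the argument entirely (``We omit the easy proof of the next lemma''), so there is nothing to compare against, but what you wrote is the standard argument the authors clearly had in mind. In particular you correctly identified and handled the only non-routine point, namely item (c): passing from $N\trianglelefteq H$ to $\ol N\trianglelefteq G$ via density and a closed witnessing neighborhood is exactly what makes the dense-subgroup case work for NSnS while the naive restriction argument of (b) does not.
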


 Recall that a SIN group (SIN stands for Small Invariant Neighborhoods) is a topological group
 $G$ such that for every $U\in
{\cal V}(0)$ there exists $V\in {\cal V}(0)$ with $-x+V+x\subseteq U$ for all $x\in G.$

\begin{proposition}\label{loc_min+NSS>metr} Every locally minimal SIN group $G$ is metrizable
provided it is NSnS.
\end{proposition}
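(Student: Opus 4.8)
The plan is to exhibit a coarser metrizable group topology $\sigma\le\tau$ for which a witness of local minimality is still a neighborhood of $0$, and then force $\sigma=\tau$ by local minimality. To set this up, use the NSnS property to fix $V\in{\cal V}(0)$ containing only the trivial normal subgroup, and let $W\in{\cal V}(0)$ witness local minimality of $(G,\tau)$. Since $G$ is SIN, choose a symmetric \emph{invariant} neighborhood $V_0\subseteq V\cap W$. Then $V_0$ still witnesses local minimality (every smaller neighborhood does, by the Remark following Definition~\ref{def:loc:min}) and, being contained in $V$, it contains no nontrivial normal subgroup.

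Next, recursively construct symmetric invariant neighborhoods $V_n\in{\cal V}(0)$ with $V_n+V_n\subseteq V_{n-1}$. Given $V_{n-1}$, first pick a symmetric neighborhood $W_n$ with $W_n+W_n\subseteq V_{n-1}$, and then, using the SIN property, select an invariant symmetric $V_n\subseteq W_n$; this gives $V_n+V_n\subseteq W_n+W_n\subseteq V_{n-1}$. Because each $V_n$ is symmetric, invariant, and satisfies $V_n+V_n\subseteq V_{n-1}$, the decreasing family $\{V_n\}$ is a neighborhood base at $0$ of a group topology $\sigma$ on $G$ — invariance makes the conjugation axiom $x+V_n-x\subseteq V_n$ automatic, so no extra hypothesis is needed here. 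Moreover $\sigma\le\tau$, since every $V_n$ is a $\tau$-neighborhood of $0$.

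It remains to see that $\sigma$ is metrizable. Put $N=\bigcap_n V_n$. If $x,y\in N$ then $x,y\in V_{n+1}$ for all $n$, so $x+y\in V_{n+1}+V_{n+1}\subseteq V_n$ for every $n$, whence $x+y\in N$; combined with symmetry this shows $N$ is a subgroup, and invariance of each $V_n$ shows that $N$ is normal. As $N\subseteq V_0$ and $V_0$ contains only the trivial normal subgroup, we get $N=\{0\}$, so $\sigma$ is Hausdorff. A Hausdorff group topology with a countable base at $0$ is metrizable by the Birkhoff--Kakutani theorem, so $\sigma$ is metrizable. Finally, $V_0\supseteq V_1$ shows that $V_0$ is a $\sigma$-neighborhood of $0$; since $V_0$ was chosen as a witness of local minimality of $(G,\tau)$ and $\sigma\le\tau$ is Hausdorff, local minimality yields $\sigma=\tau$, and therefore $\tau$ is metrizable.

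The only delicate point — and the sole place where the SIN hypothesis is genuinely used — is keeping each $V_n$ invariant throughout the recursion. This is exactly what both makes $\{V_n\}$ a base of a group topology and forces the limit subgroup $N=\bigcap_n V_n$ to be normal, so that the NSnS condition on $V_0$ can collapse it to $\{0\}$.
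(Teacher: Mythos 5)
Your proof is correct and follows essentially the same route as the paper: build a decreasing sequence of symmetric neighborhoods with $V_n+V_n\subseteq V_{n-1}$ whose conjugation behaviour is controlled by the SIN property, observe that $\bigcap_n V_n$ is a normal subgroup killed by the NSnS hypothesis, and invoke local minimality to identify the resulting metrizable topology with $\tau$. The only (harmless) cosmetic difference is that you make each $V_n$ fully invariant, whereas the paper only arranges $-x+V_n+x\subseteq V_{n-1}$ for all $x$.
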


\begin{proof} Let us assume that $(G,\tau)$ is $V$--locally minimal and NSnS,
where $V$  is a neighborhood of $0$ in $(G,\tau)$  containing no
non--trivial normal subgroups. Since $\tau$ is a group topology, it is
possible to construct inductively a sequence $(V_n)$ of symmetric
neighborhoods of $0$ in $\tau$ which satisfy $V_n+V_n\subseteq V_{n-1}$
(where $V_0:=V$) and $-x+V_n+x\subseteq V_{n-1}$  for all $x\in G$.

Let $\sigma$ be the group topology generated by the neighborhood basis
$(V_n)_{n\in\N}$. Obviously, $\sigma$ is coarser than $\tau$
and $V\in {\cal V}_{\sigma}(0)$. In order to conclude that $\sigma=\tau$,
it only remains to show that $\sigma$ is a Hausdorff
topology, which is equivalent to $\dis \bigcap_{n\in\N}V_n=\{0\}$. This is
 trivial, since the intersection is a normal subgroup contained in $V$.
\end{proof}

\begin{example}\label{S(X)} One cannot relax the ``SIN" condition even when
$G$ is minimal. Indeed, for every infinite set $X$ the symmetric group
$G=S(X)$ is
minimal and NSnS.
On the other hand, $S(X)$ is metrizable only when $X$ is countable
(\cite[\S 7.1]{DPS}).
%$\spadesuit $\footnote{section7.1 ???}
Note that this group strongly fails to be NSS, as ${\mathcal V}(0)$ has a base consisting of
open subgroups (namely, the pointwise stabilizers of finite subsets of $X$).
\end{example}

\begin{remark}\label{Rem:NSS}
\begin{itemize}
\item[(a)]
%The classes of NSnS groups and NSS groups are stable under taking finite direct products and finer group topologies.
% \item[(b)]
 The completion of a NSS group is not NSS in general: For example
the group $\T^{\N}$ is monothetic, i.e. it has a dense
subgroup $H$ algebraically isomorphic to $\Z$. Since the completion of a
linear group topology is again linear, and the product topology on $\T^{\N}$
is not linear, $H$ is not linear  either. So Example \ref{Zlinear}(d)
implies
that $H$ is NSS. But $H$ is dense in $\T^{\N}$ which is not NSS.

\item[(b)] It was a problem of I. Kaplansky whether the NSS property is preserved under taking arbitrary quotients. A counter-example was
given by S. Morris (\cite{Mor}) and Protasov (\cite{Pts}); the latter proved that NSS is preserved under taking quotients with respect to discrete normal subgroups.

\item[(c)] In contrast with the NSS property, a subgroup of an NSnS group need not be NSnS. Indeed, take the permutation group $G=S({\mathbb N})$. Let
${\mathbb N}=\bigcup_n F_n$ be a partition of the naturals into finite sets $F_n$ such that each $F_n$ has size $2^n$. Let $\sigma_n$ be a
cyclic permutation of length $2^n$ of the finite set $F_n$ and let $\sigma$ be the permutation of ${\mathbb N}$ that acts on each $F_n$
as $\sigma_n$. Obviously, $\sigma$ is a non-torsion element of $G$, so it generates an infinite cyclic subgroup $C\cong \Z$. For convenience
identify $C$ with $\Z$. Then, while $G$ is NSnS by Example \ref{Zlinear}(e), the induced topology of $C$ coincides with the 2-adic topology of $C=\Z$, so it is linear and certainly non-NSnS. Indeed, a prebasic neighborhood of the identity element ${\rm id}_{\mathbb N}$
in  $C$ has the form  $U_x = C \cap {\rm Stab}_x$, where ${\rm Stab}_x$ is the stabilizer of the point  $x \in {\mathbb N}.$ If  $x\in F_n,$ then obviously all powers of $\sigma^{2^n}$ stabilize
$x$, so   $U_x$ contains the subgroup $V_n = \langle\sigma^{2^n} \rangle$. This proves that the induced topology of  $C \cong {\mathbb Z}$ is coarser than the 2-adic topology. Since
the latter is minimal (\cite[2.5.6]{DPS}), we conclude
that $C$ has the 2-adic topology.
\end{itemize}
\end{remark}

\section{Groups uniformly free from small subgroups}\label{section_ufss}

\subsection{Local minimality and the UFSS property }

 We have seen (Example \ref{Exa_LocMin}(c)) that all normed spaces are locally
 minimal when regarded as topological abelian groups. The
following group analog of a normed space was introduced by Enflo (\cite{E});
we will show in Facts \ref{UFFS_locmin}(a) that every such group is locally
minimal:

 \begin{definition} \label{defenflo}
 A  Hausdorff topological group $(G,\tau)$ is
 {\em uniformly free from small subgroups} (UFSS for short) if for some
 neighborhood $U$ of $0$, the sets $(1/n)U$ form a neighborhood basis at $0$ for $\tau$.
\end{definition}

Neighborhoods $U$ satisfying the condition described in Def. \ref{defenflo}
will be said to be {\em distinguished.} It is easy to see that any neighborhood
of zero contained in a distinguished one is distinguished, as well.

  Obviously, discrete groups are UFSS. Now we give some non-trivial examples.

\begin{example}\label{example1} \label{locallybounded}
\begin{itemize}
\item[(a)] ${\mathbb R}$ is a UFSS group with respect to $[-1,1].$ \item[(b)] ${\mathbb T}={\mathbb R}/{\mathbb Z}$ is a UFSS group with respect to ${\mathbb T}_+$, the image of $[-1/2,1/2]$ under the quotient map ${\mathbb R}\to {\mathbb T}$.
\item[(c)] A topological vector space is UFSS as a topological abelian group
if and only if it is locally bounded. In particular every normed space is a
UFSS group.

Recall that a subset $B$ of a (real or
complex) topological vector space $E$ is usually referred to as {\em bounded}
if for every neighborhood of zero $U$ in $E$ there exists
$\alpha>0$ with $B\subseteq \lambda U$ for every $\lambda$ with
$|\lambda|>\alpha,$ and {\em balanced} whenever $\lambda B\subseteq
B$ for every $\lambda$ with $|\lambda|\le 1.$ The space $E$ is
{\em locally bounded} if it has a bounded  neighborhood of zero. It is
straightforward that any locally bounded space is  UFSS when  regarded
as a topological abelian group, and any of its bounded neighborhoods
of zero is a distinguished neighborhood. Conversely, if a topological
vector
  space is UFSS as a topological abelian group,   then any distinguished
  balanced neighborhood of zero is bounded in this sense.

  This, of course, includes unit balls of normed spaces, but there are some
  important  non-locally-convex examples as well (see Example \ref{ExBana}(b)).
%  and \ref{lp}).
  \item[(d)] Every Banach-Lie group is UFSS, \cite[Theorem 2.7]{MP}.

  \end{itemize}
\end{example}

\begin{facts}
\label{UFFS_locmin} \begin{itemize} \item[(a)] Every UFSS group with
distinguished neighborhood $U$ is $U$-locally minimal {\rm
(\cite[Proposition 2.5]{MP})}. Indeed, one can see that a UFSS group $(G,\tau)$
with distinguished neighborhood $U$ has the following property, which trivially implies that $(G,\tau)$ is $U$-locally minimal: if
$\tcal$ is a group topology on $G$ such that $U$ is a $\tcal$-neighborhood  of $0$, then $\tau \leq \tcal$.
\item[(b)] All UFSS groups are NSS groups.
\end{itemize}
\end{facts}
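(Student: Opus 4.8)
The plan is to prove the two items separately, both reducing to the elementary observation that for \emph{any} group topology on $G$ the multiplication-by-$k$ maps $\mu_k\colon x\mapsto kx$ are continuous (being, regardless of commutativity, finite iterates of the continuous addition), together with the two set-theoretic identities that are immediate from the definitions in the notation section: $(1/n)U=\bigcap_{k=1}^n\mu_k^{-1}(U)$ and $U_\infty=\bigcap_{n\in\N}(1/n)U$.

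For item (a), the statement to establish is that any group topology $\tcal$ for which $U$ is a $\tcal$-neighborhood of $0$ automatically satisfies $\tau\leq\tcal$; once this is done, $U$-local minimality is immediate, since a coarser Hausdorff topology $\sigma\leq\tau$ with $U\in {\cal V}_\sigma(0)$ would then force $\tau\leq\sigma$, whence $\sigma=\tau$. To obtain $\tau\leq\tcal$ I would show that each basic $\tau$-neighborhood $(1/n)U$ is a $\tcal$-neighborhood of $0$. Indeed, each $\mu_k$ is $\tcal$-continuous and fixes $0$, so $\mu_k^{-1}(U)$ is a $\tcal$-neighborhood of $0$ as soon as $U$ is; therefore the finite intersection $(1/n)U=\bigcap_{k=1}^n\mu_k^{-1}(U)$ is a $\tcal$-neighborhood of $0$ as well. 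Since the sets $(1/n)U$ form a $\tau$-base at $0$, this yields $\tau\leq\tcal$.

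For item (b), I would exploit the identity $U_\infty=\bigcap_{n\in\N}(1/n)U$. Because the $(1/n)U$ form a neighborhood base at $0$ and $\tau$ is Hausdorff, their intersection is trivial, so $U_\infty=\{0\}$. Now any subgroup $H\subseteq U$ lies inside $U_\infty$: for $x\in H$ every multiple $nx$ stays in $H\subseteq U$, hence $x\in U_\infty$. Consequently $H=\{0\}$, which shows that the distinguished neighborhood $U$ contains only the trivial subgroup, i.e. $(G,\tau)$ is NSS.

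As for the main obstacle, there is none of real substance: both parts are formal consequences of the definition of UFSS. The only points requiring a moment's care are the continuity of $\mu_k$ in an arbitrary group topology $\tcal$ that is neither assumed Hausdorff nor comparable with $\tau$ (which holds because $\mu_k$ is built from addition alone), and the use of Hausdorffness in (b) to guarantee $\bigcap_{n\in\N}(1/n)U=\{0\}$; neither presents a genuine difficulty.
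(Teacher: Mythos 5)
Your proposal is correct and follows exactly the route the paper indicates (the paper gives no written-out proof, merely citing \cite[Proposition 2.5]{MP} and stating the key property that $U\in{\cal V}_{\tcal}(0)$ forces $\tau\leq\tcal$); your verification via the $\tcal$-continuity of the maps $x\mapsto kx$, the identity $(1/n)U=\bigcap_{k=1}^n\mu_k^{-1}(U)$, and the observation that a subgroup contained in $U$ lies in $U_\infty=\{0\}$ is the standard and intended filling-in of those details.
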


Next we give some examples of NSS groups that are not UFSS.

\begin{example}\label{nss_not_ufss} \begin{itemize}
\item[(a)] Consider the group $\R^{(\N)}=\{(x_n)\in\R^{\N}:\ x_n =0 $ for almost all $n\in\N\},$ endowed with the rectangular topology, which admits as a basis of neighborhoods of zero the following family of sets:
$$
U_{(\varepsilon_n)}:=\{(x_n)\in \R^{(\N)}:\ |x_n|<\varepsilon_n \;\forall\ n\in\N \},\quad (\varepsilon_n)_{n\in {\mathbb N}}\in (0,\infty)^{\mathbb N}.
$$
 This group is not metrizable, hence it cannot be a UFSS group. On the other hand, any of the neighborhoods $U_{(\varepsilon_n)}$ contains only the trivial subgroup, so it is a NSS group.
\item[(b)] All free abelian topological groups on a metric space are NSS groups (see Example \ref{Zlinear}(c)).

Take a non-locally compact metric space $X$, then $A(X)$ is NSS, but not UFSS (indeed, if $A(X)$ is a $k$-space for some metrizable $X$, then $X$ is locally compact by
\cite[Proposition 2.8]{AOP}).
\end{itemize}
\end{example}

Example \ref{Exa_LocMin},  Facts \ref{UFFS_locmin} and Example \ref{example1} give   us a strong motivation to study locally
minimal groups, which put under the same umbrella three extremely relevant properties as minimality, UFSS and local compactness.

Example \ref{example1} shows that a locally minimal abelian group need not be precompact, in contrast with Prodanov-Stoyanov's
theorem. We see in the following example that actually there exist abelian locally minimal groups without nontrivial continuous characters.

\begin{example}\label{ExBana} \begin{itemize}
\item[(a)] According to a result of W. Banaszczyk (\cite{BB}), every infinite dimensional Banach space $E$ has a discrete  and free subgroup $H$ such that the quotient group $E/H$ admits only the trivial character. $E/H$ is locally isomorphic with $E$, hence it is a Banach-Lie group and then UFSS.

\item[(b)]
Fix any $s\in (0,1)$ and consider the topological vector space $L^{s}$ of all classes of Lebesgue measurable functions $f$ on $[0,1]$ (modulo almost everywhere equality) such
that $\int_0^1 |f|^s d\lambda$ is finite, with the topology given by the following basis of neighborhoods of zero:
$$
U_r=\left\{f\,:\,\int_0^1 |f|^s d\lambda \le r\right\},\quad r>0.
$$
(Following a customary abuse of notation, we use here (and in Example \ref{elecero} and Remark \ref{elecero_nolocmin}) the same symbol to denote both a function and its class under the equivalence relation of almost everywhere equality.)
In \cite{Day} it was proved that $L^s$ has no nontrivial continuous linear
functionals. It is known that every  character defined on the
topological abelian group underlying a topological vector space can be
lifted to a
continuous linear functional on the space
(\cite{Smi}). Thus as a topological group, $L^s$ has trivial dual. On the other
hand $L^{s}$ is a locally bounded space (note
that for every $r>0$ one has $U_1\subseteq r^{-1/s}U_r$), hence it is a UFSS
group (Example \ref{locallybounded}(c)).
\end{itemize}
\end{example}

\begin{remark}\label{cptUFSS}
It is a well known fact (see for instance \cite[32.1]{stroppel}) that for every compact group $K$ and $U \in {\mathcal V}(0)$ there exists a closed normal
subgroup $N$ of $K$ contained in $U$ such that $K/N$ is a Lie group, hence UFSS. This implies that the following assertions are equivalent:
\begin{itemize}
 \item[(a)] $K$ is UFSS,
 \item[(b)] $K$ is NSS,
  \item[(c)] $K$ is NSnS,
 \item[(d)] $K$ is a Lie group.
\end{itemize}
In case $K$ is abelian, they are equivalent to: $K$ is a closed subgroup of a
finite-dimensional torus.

 (The same equivalences are known to be true for locally compact groups which
 are either connected or abelian.)
\end{remark}

 In order to extend the above equivalences to locally minimal precompact groups,
 we need the following Lemma:

\begin{lemma}\label{LemmaNSS}
 Let $(G,\tau)$ be a precompact group. Then the following are equivalent:
\begin{itemize}
\item[(a)]  $(G,\tau)$  is NSnS;
\item[(b)]  For every $U \in {\mathcal V}(0)$ there exists a continuous
           injective homomorphism $f:G\to L$ such that  $L$ is a compact Lie
           group and $f(U)$ is a neighborhood of $0$ in $f(G)$.
\item[(c)] There exist a compact Lie group $L$ and a continuous injective
             homomorphism $f:G\to L.$
\item[(d)]  $G$ admits a coarser UFSS group topology.
\item[(e)]  $(G,\tau)$  is NSS.
\end{itemize}
In case $G$ is  abelian these conditions are equivalent to the existence of a
continuous injective homomorphism $G\to \T^k$ for
some $k\in \N$.
\end{lemma}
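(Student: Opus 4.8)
The plan is to prove the cyclic chain (a) $\Rightarrow$ (b) $\Rightarrow$ (c) $\Rightarrow$ (d) $\Rightarrow$ (e) $\Rightarrow$ (a), and then treat the abelian refinement separately. Throughout I would work inside the compact completion $K := \widetilde{G}$, which is compact because $G$ is precompact and in which $G$ sits as a dense subgroup carrying the subspace topology. The whole point of this move is that Remark \ref{cptUFSS} becomes available for $K$: every neighborhood of $0$ in $K$ contains a closed normal subgroup $N$ with $K/N$ a compact Lie group.

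Three of the implications are essentially formal and I would dispose of them first. For (e) $\Rightarrow$ (a) there is nothing to do, since a neighborhood witnessing NSS contains no nontrivial subgroup, \emph{a fortiori} no nontrivial normal subgroup. For (b) $\Rightarrow$ (c) one simply specializes (b) to any single $U \in {\mathcal V}(0)$. For (c) $\Rightarrow$ (d) I would pull back the UFSS structure: if $f : G \to L$ is continuous and injective with $L$ a compact Lie group, then $L$ is UFSS (Remark \ref{cptUFSS}) with some distinguished symmetric $W$, and the initial topology $\tau'$ induced by $f$ is a Hausdorff (by injectivity into the Hausdorff $L$) group topology coarser than $\tau$; the identity $f^{-1}((1/n)W) = (1/n)f^{-1}(W)$ shows that the sets $(1/n)f^{-1}(W)$ form a neighborhood base at $0$ for $\tau'$, so $(G,\tau')$ is UFSS. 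Finally (d) $\Rightarrow$ (e) follows because UFSS groups are NSS (Facts \ref{UFFS_locmin}(b)) and NSS is inherited by finer group topologies (Lemma \ref{NSS1}(a)).

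The substance of the lemma is (a) $\Rightarrow$ (b), and this is where I expect the only real difficulty. Fix a neighborhood $V_0$ of $0$ in $G$ witnessing NSnS and an arbitrary $U \in {\mathcal V}(0)$. Using density of $G$ in $K$, choose a symmetric open $\tilde{W} \subseteq K$ with $\tilde{W} \cap G \subseteq U \cap V_0$, and then a symmetric open $\tilde{U}_1$ with $\tilde{U}_1 + \tilde{U}_1 \subseteq \tilde{W}$. Apply Remark \ref{cptUFSS} to obtain a closed normal $N \leq K$ with $N \subseteq \tilde{U}_1$ and $L := K/N$ a compact Lie group, and set $\pi : K \to L$ the quotient map and $f := \pi|_G$. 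The key observation is that $N \cap G$ is a normal subgroup of $G$ (since $N$ is normal in $K \supseteq G$, one has $-g + (N\cap G) + g = N \cap G$ for every $g \in G$) contained in $\tilde U_1 \cap G \subseteq V_0$; hence $N \cap G = \{0\}$ by NSnS, so $f$ is injective.

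It remains to see that $f(U) = \pi(U)$ is a neighborhood of $0$ in $f(G)$, and this is the delicate point. Since $\pi$ is open, $\pi(\tilde{U}_1)$ is an open neighborhood of $0$ in $L$, so it suffices to check $\pi(\tilde{U}_1) \cap f(G) \subseteq f(U)$. If $y = \pi(u) = \pi(g)$ with $u \in \tilde{U}_1$ and $g \in G$, then $g - u \in N \subseteq \tilde{U}_1$, whence $g = u + (g-u) \in \tilde{U}_1 + \tilde{U}_1 \subseteq \tilde{W}$; thus $g \in \tilde{W} \cap G \subseteq U$ and $y = \pi(g) = f(g) \in f(U)$, as wanted. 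For the abelian addendum I would note that then $K$ is compact abelian, so $L = K/N$ is a compact abelian Lie group and hence embeds continuously and injectively into some $\T^k$ (being of the form $\T^m \times F$ with $F$ finite abelian, and finite abelian groups embed into tori); composing with $f$ yields an injection $G \to \T^k$. The converse is immediate, since $\T^k$ is a compact abelian Lie group and such an injection is an instance of (c).
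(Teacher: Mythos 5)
Your proposal is correct and follows essentially the same route as the paper: the formal implications are dispatched identically, and for (a) $\Rightarrow$ (b) both arguments pass to the compact completion, shrink a neighborhood so that the normal subgroup $N$ supplied by Remark \ref{cptUFSS} meets $G$ trivially (giving injectivity of $q\restriction_G$), and then use the same coset computation ($\pi(u)=\pi(g)$ forces $g\in \tilde U_1+\tilde U_1$) to see that $f(U)$ is a neighborhood of $0$ in $f(G)$. The only cosmetic remark is that in the non-abelian setting one should write $g=u+(-u+g)$ rather than $g=u+(g-u)$, which normality of $N$ makes harmless.
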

\begin{proof}
To prove that (a) implies (b) assume that $(G,\tau)$ is NSnS and fix a $U \in
{\mathcal V}(0)$. Let $W$ be a neighborhood  of $0$
in the completion $K$ of $G$ such that $W\cap G$ contains no non-trivial normal
subgroups and $(W+W)\cap G \subseteq U$. As in
Remark \ref{cptUFSS} there exists a closed normal subgroup $N$ of $K$ contained
in $W$ such that $L=K/N$ is a Lie group. As $N\cap
G=\{0\}$ by our choice of $W$, the canonical homomorphism $q: K\to L$ restricted
 to $G$ gives a continuous injective homomorphism $f=q\restriction_G :G \to L$.
 Observe that
$$
f(U)\supseteq q((W+W)\cap G)\supseteq q(N+W)\cap q(G)
$$
as $N \subseteq W$. Finally, the latter set  is a neighborhood of $0$ in $f(G)$
 as $N+W \in {\mathcal V}(0_K)$.

(b)$\Rightarrow$(c) is trivial. (c)$\Rightarrow$(d) is a consequence of the fact
 that every Lie group is UFSS. (d)$\Rightarrow$(e) and (e)$\Rightarrow$(a) are
 trivial.
%Let $(G,\tau)$ be a precompact group. Then the following are equivalent:
%\begin{itemize}
%\item[(a)]  $(G,\tau)$  is NSnS;
%\item[(b)]  for every $U \in {\mathcal V}(0)$ there exists a continuous injective homomorphism $f:G\to L$ such that  $L$ is a compact Lie group and $f(U)$ is a neighbourhood of $0$ in $f(G)$.
%\item[(c)]  $G$ admits a coarser UFSS group topology;
%\item[(d)]  $(G,\tau)$  is NSS.
%\end{itemize}
%In case $G$ is  abelian these conditions are equivalent to the existence of a continuous injective homomorphism $G\to \T^k$ for
%some $k\in \N$.
%\end{lemma}
%\begin{proof} \ To prove that (a) implies (b) assume that $(G,\tau)$ is NSnS and fix a $U \in {\mathcal V}(0)$. Let $W$ be a neighborhood  of $0$
%in the completion $K$ of $G$ such that $W\cap G$ contains no non-trivial normal subgroups and $(W+W)\cap G \subseteq U$. As in
%Remark \ref{cptUFSS} there exists a closed normal subgroup $N$ of $K$ contained in $W$ such that $L=K/N$ is a Lie group. As $N\cap
%G=\{0\}$ by our choice of $W$, the canonical homomorphism $q: K\to L$ restricted to $G$ gives a continuous injective homomorphism $f=q\restriction_G :G \to L$.
% Observe that
%$$
%f(U)\supseteq q((W+W)\cap G)\supseteq q(N+W)\cap q(G)
%$$
%as $N \subseteq W$. Finally, the latter set  is a neighbourhood of $0$ in $f(G)$ as $N+W \in {\mathcal V}(0_K)$.

%According to  Remark \ref{cptUFSS}, (b) $\Rightarrow$ (c). The implications (c) $\Rightarrow$ (d) and (d) $\Rightarrow$ (a) are clear.
\end{proof}

\begin{proposition}\label{Min+NSS} For a locally minimal precompact group $G$
the following are equivalent:
\begin{itemize}
\item[(a)] $G$ is NSnS;
 \item[(b)] $G$ is NSS;
\item[(c)] $G$ is UFSS;
\item[(d)] $G$ is isomorphic to a dense subgroup of a compact Lie group.
\end{itemize}
\end{proposition}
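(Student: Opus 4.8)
The plan is to prove Proposition~\ref{Min+NSS} by establishing the cycle of implications (a)$\Rightarrow$(c)$\Rightarrow$(b)$\Rightarrow$(a) together with the equivalence (c)$\Leftrightarrow$(d), leaning heavily on Lemma~\ref{LemmaNSS} which already handles the precompact case. The two trivial directions to dispose of first are (c)$\Rightarrow$(b), which is exactly Facts~\ref{UFFS_locmin}(b), and (b)$\Rightarrow$(a), which holds because every NSS group is NSnS (a neighborhood containing no nontrivial subgroup certainly contains no nontrivial normal subgroup). So the substantive work is concentrated in the implication (a)$\Rightarrow$(c), and afterwards in tying (d) into the equivalence.

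For the main implication (a)$\Rightarrow$(c) I would invoke Lemma~\ref{LemmaNSS}, which is available for the precompact group $G$. Assuming $G$ is NSnS, condition (d) of that lemma gives a \emph{coarser} UFSS group topology $\sigma \leq \tau$ on $G$; let $U$ be a distinguished neighborhood of $0$ for $\sigma$. By Facts~\ref{UFFS_locmin}(a), the UFSS topology $\sigma$ has the property that any group topology for which $U$ is a neighborhood of $0$ must be finer than $\sigma$; in particular $U$ witnesses $\sigma$-local minimality. Now here is where the local minimality of $(G,\tau)$ enters decisively: since $U$ is a $\sigma$-neighborhood of $0$ and $\sigma \leq \tau$, the set $U$ is also a $\tau$-neighborhood of $0$. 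The idea is that $\tau$ and $\sigma$ are forced to coincide. To see this cleanly I would shrink $U$ to a $\tau$-neighborhood witnessing local minimality of $(G,\tau)$ (using the remark that any smaller neighborhood also witnesses local minimality), and observe that $\sigma$ is then a Hausdorff group topology coarser than $\tau$ for which this witnessing neighborhood is a $\sigma$-neighborhood of $0$; local minimality of $\tau$ then yields $\sigma = \tau$. Consequently $\tau$ itself is the UFSS topology, giving (c).

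The subtle point — and what I expect to be the main obstacle — is the bookkeeping that aligns the distinguished neighborhood produced by Lemma~\ref{LemmaNSS}(d) with a neighborhood that simultaneously witnesses local minimality of $(G,\tau)$. Since local minimality is witnessed by \emph{some} $\tau$-neighborhood $V_0$, whereas the UFSS topology $\sigma$ comes with its own distinguished neighborhood $U$ which need not a priori be contained in $V_0$, I would use the flexibility noted after Definition~\ref{defenflo} (any neighborhood contained in a distinguished one is again distinguished) to replace $U$ by $U \cap V_0$: this is both distinguished for $\sigma$ and contained in $V_0$, hence still witnesses local minimality of $\tau$. Then the argument of the previous paragraph applies to this common neighborhood and forces $\sigma = \tau$.

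Finally, for the equivalence with (d): once we know $G$ is UFSS, hence NSS, the condition (c) of Lemma~\ref{LemmaNSS} produces a continuous injective homomorphism $f : G \to L$ into a compact Lie group $L$; since $G$ is precompact, its Ra\u\i kov completion $\widetilde{G}$ is compact, and $f$ extends to this completion, exhibiting $G$ as (isomorphic to) a dense subgroup of a compact Lie group. Conversely, if $G$ is a dense subgroup of a compact Lie group $K$, then $K$ is NSS, hence NSnS, by Remark~\ref{cptUFSS}; being NSS passes to the subgroup $G$ by Lemma~\ref{lem:NSS}(b), giving (b) and closing the circle. This last step is routine and I would keep it brief.
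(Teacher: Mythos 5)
Your overall plan is close to the paper's (which runs the cycle (a)$\Rightarrow$(d)$\Rightarrow$(c)$\Rightarrow$(b)$\Rightarrow$(a)), and the easy implications are handled correctly, but the step you yourself flag as the subtle one --- aligning the distinguished neighborhood with a witness of local minimality --- is exactly where the argument breaks. You take from Lemma \ref{LemmaNSS}(d) \emph{some} coarser UFSS topology $\sigma\le\tau$ with distinguished neighborhood $U$, and then replace $U$ by $U\cap V_0$, claiming this set is ``both distinguished for $\sigma$ and contained in $V_0$''. For $U\cap V_0$ to be distinguished for $\sigma$ it must in particular be a $\sigma$-neighborhood of $0$; but $V_0$ is only a $\tau$-neighborhood, and since $\sigma$ may be strictly coarser than $\tau$ there is no reason why $V_0$, hence $U\cap V_0$, should contain any $(1/n)U$. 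The remark after Definition \ref{defenflo} only says that a \emph{$\sigma$-neighborhood} contained in a distinguished one is again distinguished, so it does not apply here. Without this you have no $\tau$-neighborhood that simultaneously witnesses local minimality of $\tau$ and is a $\sigma$-neighborhood of $0$, and the appeal to local minimality cannot be made. The reasoning pattern ``locally minimal $+$ admits a coarser UFSS topology $\Rightarrow$ the two topologies coincide'' is false in general: $\Q/\Z$ with the discrete topology is locally minimal and carries the strictly coarser UFSS topology induced from $\T$. (That group is not precompact, so it does not contradict the proposition, but your forcing step nowhere uses precompactness, so the example shows the step itself is unsound.)

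The repair is to use part (b) of Lemma \ref{LemmaNSS} rather than part (d), because (b) carries the quantifier ``for every $U\in\mathcal V(0)$'' that (d) discards: apply it with $U:=V_0$, the witness of local minimality, to obtain a continuous injective homomorphism $f\colon G\to L$ into a compact Lie group with $f(V_0)$ a neighborhood of $0$ in $f(G)$. The initial topology $\sigma_f$ induced by $f$ is a Hausdorff group topology coarser than $\tau$, it is UFSS (being the topology of a subgroup of a Lie group), and by injectivity $V_0\supseteq f^{-1}(W)$ for a suitable neighborhood $W$ of $0$ in $L$, so $V_0$ \emph{is} a $\sigma_f$-neighborhood of $0$; local minimality now yields $\sigma_f=\tau$, i.e.\ $f$ is an embedding. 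This is precisely the paper's argument for (a)$\Rightarrow$(d): the closure of $f(G)$ is a compact subgroup of a compact Lie group, hence closed and itself a Lie group, so $G$ is a dense subgroup of a compact Lie group. A smaller remark on your (c)$\Rightarrow$(d): a continuous injective homomorphism into a compact Lie group does not by itself exhibit $G$ as a dense subgroup of one (injectivity is weaker than being an embedding); the clean route is that $\widetilde G$ is compact and UFSS by Proposition \ref{perm_prop_3}(a), hence a Lie group by Remark \ref{cptUFSS}, and $G$ is dense in it.
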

\begin{proof}
The implication (a) $\Rightarrow$ (d) follows from (a) $\Rightarrow $ (b) in
Lemma \ref{LemmaNSS}, since the local minimality of
$G$ and (b) from \ref{LemmaNSS} imply that $G\rightarrow L$ is an embedding.
Note that a compact subgroup of  a compact Lie group is
closed, so a Lie group itself. (d)$\Rightarrow$ (c) $\Rightarrow$ (b)
$\Rightarrow$ (a) are trivial.
\end{proof}

\begin{remark}\label{Min+NSS_abel} For locally minimal precompact abelian groups, condition (d) of Prop. \ref{Min+NSS} can be replaced by: $G$ is
isomorphic to a subgroup of a torus $\T^n$, $n\in \N$. Note that the class of locally minimal precompact abelian groups contains all minimal abelian
groups, due to the deep theorem of Prodanov and Stoyanov which states that such groups are precompact.
\end{remark}

\begin{remark} Proposition \ref{Min+NSS} shows very neatly the differences between minimality and UFSS. While all (dense) subgroups of a torus  $\T^n$
are UFSS,  the minimal among the dense subgroups of $\T^n$ are those that contain the socle $Soc(\T^n)$.

Indeed, $Soc(\T^n)$ is dense and every closed non-trivial subgroup $N$ of $\T^n$ is still a Lie group, so has non-trivial torsion elements (i.e.,
meets $Soc(\T^n)$). Therefore, by (\cite[Theorem 2.5.1]{DPS})  a dense
subgroup $H$ of $\T^n$ is   minimal  iff $H$ contains $Soc(\T^n)$. In particular, there is a smallest dense minimal subgroup of $\T^n$,
namely $Soc(\T^n)$.
\end{remark}

\begin{example} Let $\tau$ be a UFSS precompact topology on ${\mathbb Z}.$ Then $(\Z,\tau)$ is a dense subgroup of a group of the
form $\T^k\times \Z(m)$, where $k,m\in \N, k>0$. Indeed,  by Proposition \ref{Min+NSS} and Remark \ref{Min+NSS_abel} $(\Z,\tau)$ is isomorphic to
a subgroup of some finite-dimensional torus $\T^n$. Then the closure $C$ of $\Z$ in $\T^n$
will be a monothetic compact abelian Lie group. So the
connected component $c(C)\cong \T^k$ for some $k\in \N, k>0$ and $C/c(C)$
is a discrete monothetic compact group, so $C/c(C)\cong \Z(m)$ for some
$m\in \N$, so $C\cong \T^k\times \Z(m)$ since $c(C)$ splits as a divisible
subgroup of $C$.
\end{example}

\subsection{Permanence properties of UFSS groups} \label{perm_ufss}

 In the next proposition we collect all permanence properties of UFSS groups we can verify.

\begin{proposition}\label{perm_prop_3} The class of UFSS groups has the following permanence properties:
\begin{itemize}
\item[(a)]  If $G$ is a dense subgroup of $\wt{G}$ and $G$ is  UFSS, then
$\wt{G}$ is UFSS.
\item[(b)] Every subgroup of a UFSS group is UFSS.
\item[(c)] Every finite product of  UFSS groups is UFSS.
\item[(d)] Every group locally isomorphic  to a UFSS group  is UFSS.
\item[(e)]   If an abelian topological group $G$ has a closed subgroup $H$ such that  both $H$  and $G/H$ are UFSS,
then $G$ is UFSS as well.
\end{itemize}
\end{proposition}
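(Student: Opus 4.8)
The plan is to produce, in each case, an explicit candidate distinguished neighborhood and to check the defining basis property through the elementary identities governing the operation $U\mapsto (1/n)U$. The two easiest cases set the pattern. For (b), if $U$ is distinguished in $G$ and $H\leq G$, then for $x\in H$ one has $kx\in H$ automatically, so $(1/n)(U\cap H)=(1/n)U\cap H$; thus $\{(1/n)(U\cap H)\}_n$ is exactly the trace on $H$ of the basis $\{(1/n)U\}_n$, hence a basis of the subspace topology, and $U\cap H$ is distinguished. For (c) it suffices by induction to treat two factors, where $(1/n)(U_1\times U_2)=(1/n)U_1\times(1/n)U_2$ (since $k(x_1,x_2)\in U_1\times U_2$ iff $kx_1\in U_1$ and $kx_2\in U_2$) shows $U_1\times U_2$ is distinguished in $G_1\times G_2$.

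For (d), let $f\colon V\to f(V)$ be a local isomorphism onto a neighborhood of $0$ in the UFSS group $H$, and choose a distinguished $U\subseteq f(V)$; I set $B:=f^{-1}(U)$. If $x\in(1/n)B$ then all multiples $kx\in B\subseteq V$ for $k\leq n$, so additivity of $f$ on $V$ gives $f(kx)=kf(x)$ and hence $f(x)\in(1/n)U$. Conversely, for $y\in(1/n)U\subseteq U\subseteq f(V)$ the multiples $y,2y,\dots,ny$ all lie in $U$, so additivity of $f^{-1}$ yields inductively $f^{-1}(ky)=kf^{-1}(y)\in V$; the crucial point making the induction close is precisely that the small dilations $(1/n)U$ stay inside $U$, keeping every multiple within the domain of $f^{-1}$. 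Thus $(1/n)B=f^{-1}((1/n)U)$, a neighborhood basis at $0$. For (a), take $U_0$ open symmetric distinguished in $G$ and an open $\mathcal U_0\ni 0$ in $\wt G$ with $\mathcal U_0\cap G=U_0$; choose symmetric open $\mathcal U\ni 0$ in $\wt G$ with $\mathcal U+\mathcal U\subseteq\mathcal U_0$, put $U:=\mathcal U\cap G$ (distinguished in $G$ as $U\subseteq U_0$) and $W:=\ol{U}^{\wt G}=\ol{\mathcal U}^{\wt G}$, so that $W\subseteq\mathcal U+\mathcal U\subseteq\mathcal U_0$. Continuity of $x\mapsto kx$ gives $\ol{(1/n)U}^{\wt G}\subseteq(1/n)W$, so each $(1/n)W$ contains a neighborhood and is itself a neighborhood; and the elementary fact that $A\cap\ol{D}\subseteq\ol{A\cap D}$ whenever $A$ is open, applied in $\wt G^{\,n}$ with $A=\mathcal U_0^{\,n}$ and $D=\{(s,2s,\dots,ns):s\in G\}$ (note $(x,2x,\dots,nx)\in A$ because $kx\in W\subseteq\mathcal U_0$, and it lies in $\ol{D}$ by continuity), gives $(1/n)W\subseteq\ol{(1/n)U_0}^{\wt G}$ after projecting to the first coordinate. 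Since $\{\ol{(1/n)U_0}^{\wt G}\}_n$ is a neighborhood basis at $0$ in $\wt G$, so is $\{(1/n)W\}_n$, and $W$ is distinguished.

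Part (e) is where I expect the real work. Writing $K:=G/H$ and $\pi\colon G\to K$ for the (open) quotient map, I would realize both distinguished neighborhoods inside $G$: fix open symmetric nested sets $\Omega_0\supseteq\Omega_1\supseteq\Omega_2$ with $\Omega_{i+1}+\Omega_{i+1}\subseteq\Omega_i$ and $U_0:=\Omega_0\cap H$ distinguished in $H$, and a distinguished $W_2\subseteq\pi(\Omega_1)$ in $K$. Put $B:=\Omega_2\cap\pi^{-1}(W_2)$, so that $(1/n)B=(1/n)\Omega_2\cap\pi^{-1}((1/n)W_2)$. Neighborhood-ness is then free: $(1/n)\Omega_2=\bigcap_{k\leq n}\{x:kx\in\Omega_2\}$ is a finite intersection of neighborhoods, and $\pi^{-1}((1/n)W_2)$ is a neighborhood, so each $(1/n)B$ is a neighborhood of $0$, with trace $(1/n)U_2$ on $H$. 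The entire content is the smallness (basis) property, and I would reduce it to a single lifting statement: given a neighborhood $N$ with $N'+N'\subseteq N$ and $x\in(1/n)B$, suppose we can find $s\in G$ with $\pi(s)=\pi(x)$, all multiples $ks\in\Omega_1$ for $k\leq n$, and $s\in N'$ once $n$ is large. Then $h:=x-s\in H$ satisfies $kh=kx-ks\in\Omega_2-\Omega_1\subseteq\Omega_1+\Omega_1\subseteq\Omega_0$, so $kh\in\Omega_0\cap H=U_0$ for $k\leq n$, i.e. $h\in(1/n)U_0$; as $U_0$ is distinguished, $h\in N'$ for large $n$, whence $x=s+h\in N$. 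So everything comes down to the inclusion $(1/n)W_2\subseteq\pi\bigl((1/n)\Omega_1\bigr)$ together with smallness of the lift.

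This Lifting Lemma is the main obstacle, and it is exactly where the possible non-splitting of the extension $0\to H\to G\to K\to 0$ bites: one must lift a finite arithmetic progression $\xi,2\xi,\dots,n\xi$ of $K$ to a progression $s,2s,\dots,ns$ lying inside a single fixed $\Omega_1$ in $G$, which amounts to an approximate homomorphic local section. I would first reduce to the complete case: by (a) the completions $\wt H$ and $\wt K$ are UFSS, and, using that completion is exact on short exact sequences of metrizable abelian groups with closed kernel and open quotient map, $\wt H$ is closed in $\wt G$ with $\wt G/\wt H\cong\wt K$; hence it suffices to prove $\wt G$ UFSS, after which $G\leq\wt G$ is UFSS by (b). In the resulting complete metrizable setting the lift $s$ can be built by successive approximation: lift $\xi$ to a small element via openness of $\pi$, then repeatedly correct the orbit defects $ks\notin\Omega_1$ by elements of $H$ — which project to $0$ and whose sizes are controlled by the UFSS structure of $H$ — the corrections converging by completeness. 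Making the two UFSS structures cooperate across the exact sequence in this way is the technical heart of the statement; the remaining parts are comparatively formal.
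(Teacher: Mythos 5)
Parts (a)--(d) of your argument are sound and essentially agree with the paper, which dismisses (b)--(d) as ``easy to see'' and proves (a) via the same key inclusion $(1/n)\ol{W}\subseteq \ol{(1/n)U}$ that you obtain (the paper uses a sequence argument, legitimate here since UFSS groups and their completions are metrizable; your closure-of-an-intersection trick is an acceptable variant). The genuine problem is part (e): your entire argument funnels into a ``Lifting Lemma'' asserting $(1/n)W_2\subseteq\pi\bigl((1/n)\Omega_1\bigr)$ together with smallness of the lift, you correctly identify this as the heart of the matter, and then you do not prove it. The successive-approximation sketch does not close: if you correct a lift $s$ of $\xi$ by an element $h\in H$, the multiple $ks$ moves by $kh$, so a single correction would have to cancel simultaneously the defects of $s,2s,\dots,ns$, which forces those defects to form (approximately) an arithmetic progression $-h,-2h,\dots,-nh$ in $H$; nothing in the hypotheses provides this, and completeness cannot help because the obstruction is algebraic (one would want to divide the defects by $k$ in $H$), not metric. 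In addition, your reduction to the complete case silently assumes that $G$ itself is metrizable and that $\wt{G}/\wt{H}\cong\wt{G/H}$; the first requires Graev's three-space theorem for first countability (which the paper invokes explicitly), and the second is a further unproved step.

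The paper's proof of (e) avoids lifting arithmetic progressions into a fixed neighborhood altogether, and you should adopt its sequential device. Choose $W$ so that $\pi(W+W)$ and $(W+W)\cap H$ are distinguished; by Graev, $G$ is first countable. Given $x_n\in(1/n)W$, one has $\pi(x_n)\in(1/n)\pi(W)$, hence $\pi(x_n)\to 0$ in $G/H$; since $\pi$ is open and $G$ is metrizable, this \emph{null sequence} (not a progression) lifts to $y_n\to 0$ in $G$ with $\pi(y_n)=\pi(x_n)$. Then $h_n:=x_n-y_n\in H$ and, for any $n_0$ and all large $n$, $h_n\in(1/n_0)W+(1/n_0)W\subseteq(1/n_0)(W+W)$, so $h_n\in(1/n_0)\bigl((W+W)\cap H\bigr)$; as $(W+W)\cap H$ is distinguished in $H$ this gives $h_n\to 0$, hence $x_n=y_n+h_n\to 0$, which shows that $\bigl((1/n)W\bigr)_n$ is a neighborhood basis. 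Note how this sidesteps your Lifting Lemma: the lift $y_n$ is only required to be small, with no control on its multiples, and the UFSS structure of $H$ is applied to the difference $x_n-y_n$ rather than to a corrected section. If you wish to keep your framework, you must either prove the Lifting Lemma in full (which will in effect reproduce this argument) or replace it by the null-sequence lifting above.
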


\begin{proof} (a) Let $G$ be a  UFSS group with distinguished neighborhood $U$.
Note that closures in $\wt{G}$ of the neighborhoods of $0$
in $G$ form a basis of  the neighborhoods of $0$ in $\wt{G}$. Let ${W}$ be a
symmetric neighborhood of $0$ in $G$ which satisfies $G\cap (\ol{W}+ \ol{W})
\subseteq U$.

   Let us prove that
 $$
  (1/n)\ol{W} \subseteq \ol{(1/n)U }\quad\quad\forall\; n\in\N.
  $$
   To this end fix $x\in (1/n)\ol{W} $. This means $x,2x,\ldots,nx\in\ol{W}$.   Hence there exists a sequence $(x_k)$ in $W$ which tends to $x$ and the
   sequences $(jx_k)$ converge to $jx\in\ol{W}$ for $j\in\{1,\ldots,n\}$.   We may assume that $jx_k-jx\in \ol{W} $ for all $j\in \{1,\ldots,n\}$
   and all $k\in \N$, which implies $jx_k\in G\cap (\ol{W}+\ol{W})\subseteq U$
   for all $k\in\N$ and $j\in \{1,\ldots,n\}$. This implies $x_k\in  (1/n)U $ for all $k\in\N$ and hence
$x\in\ol{(1/n)U }$.

 The inclusion $(1/n)\ol{W} \subseteq \ol{(1/n)U }$ assures that  the sets $(1/n)\ol{W}$ form  a  neighborhood basis of $0$ in $\wt{G}$; i.e. $\ol{W}$
 is a distinguished neighborhood for $\wt{G}$.

(b) to (d) are easy to see.

(e) By assumption, there exists a neighborhood $W$ of $0$ in $G$ such that $\pi(W+W),$  and  $(W+W)\cap H,$ are
distinguished neighborhoods of zero in $G/H$ and $H$, respectively, where $\pi:G\to G/H$ denotes the canonical projection.

According to a result of Graev (\cite{G} or (5.38)(e) in \cite{HR}), $G$ is first countable, since  $H$ and $G/H$  have this property.

Let us show that
$$
 \forall (x_n)\; \text{with}\; x_n\in (1/n)W \Rightarrow x_n \stackrel{\tau}{\rightarrow} 0.\eqno(3)
$$
where $\tau$ is the original topology on $G$. Since $\pi((1/n)W) \subseteq (1/n)\pi(W)$, $\pi(W)$ is a distinguished neighborhood of zero
in $G/H$ and $G$ is first countable, there exists a sequence $(h_n)$ in $H$ such that $x_n-h_n\rightarrow 0$.

 For $n_0\in \N$, there exists $n_1\ge n_0$ such that for all $n\ge n_1$  we have
 $$
  h_n=x_n+(h_n-x_n)\in ((1/n)W+(1/n_0)W)\cap H   \subseteq ((1/n_0)W+
  (1/n_0)W)\cap H \subseteq (1/n_0)((W+W)\cap H).$$

Since the sets $(1/n)(((W+W)\cap H)$ form a basis of zero neighborhoods
in $H$, the sequence $(h_n)$ tends to $0$ and hence $(x_n)$ tends to $0$ as well.

Condition (3) implies that the family $((1/n)W)$ is a basis of zero neighborhoods for $G$. Indeed, fix $U\in {\cal V}_{\tau}(0)$ and suppose
$(1/n)W\not \subseteq U$ for every $n\in {\mathbb N}.$ Select $x_n\in (1/n)W, \; x_n\not \in U.$ According to (3) the sequence $(x_n)$
 converges to zero, which contradicts $x_n\not \in U\; \forall n \in {\mathbb N}.$
\end{proof}

\begin{remark}\label{RemUFSS0}
\begin{itemize}
\item[(a)]   Items (b) and  (c) imply that finite suprema of UFSS group
topologies
  are still UFSS. In the next section we will introduce the locally GTG
  topologies which, at least in the NSS case, can be characterized as
  arbitrary suprema of UFSS group topologies (see Definition \ref{defGTG}
   and Theorem \ref{Prop2GTG}).
\item[(b)] Item (c) follows also from (e). Let us note, that it cannot be strengthened to countably
infinite products. Indeed,  any infinite product of  non-indiscrete groups  (e.~g., copies of ${\mathbb T}$) fails to be NSS, so cannot be UFSS either.
\end{itemize}
\end{remark}

 The rest of the subsection is dedicated to a very natural property that was missing in Proposition \ref{perm_prop_3}, namely stability
under taking quotients and continuous homomorphic images. It follows from item (d) of this Proposition that a
quotient of a UFSS group with respect to a discrete subgroup is
UFSS. Actually it has been shown in \cite{MMP} (Proposition 4.5)
that every Hausdorff abelian UFSS group is a quotient group of a
subgroup of a Banach space. However, as we see in the next example,
a Hausdorff quotient of a UFSS group need not be UFSS.

\begin{example}
Let $\{e_n:\ n\in\N\}$ denote the canonical basis of the Hilbert space
$\ell^2$. Consider the closed subgroup $H:=\ol{\langle\{\frac{1}{n}e_n:\ n\in\N\}
\rangle}$ of $\ell^2$. Let us denote by $B$ the unit ball in $\ell^2$ and by $\pi:
\ell^2\rightarrow\ell^2/H$ the canonical projection.
For an arbitrary $\eps>0$, we will show that $\pi(\eps B)$ contains a nontrivial
subgroup. This will imply that the quotient $\ell^2/H$ is not NSS and, in
particular, is not UFSS.

Let $k_0\in\N$ such that $\dis \sum_{k>k_0}\frac{1}{k^2}<4\eps^2$. Let
$S$ be the linear hull of the set $\{e_k:\ k>k_0\}.$ We will obtain
$$
\pi(S)\subseteq \pi(\eps B).
$$
Indeed, fix $x=(x_n)\in S$. For $n>k_0$, there exists $k_n\in\Z$ such
that $\left |x_n-\frac{k_n}{n}\right |\le
\frac{1}{2n}$. Since $h:=\sum_{n> k_0} \frac{k_n}{n}e_n\in H$ and $\|x-h\|\le
\sqrt{ \sum_{n> k_0}(\frac{1}{2n})^2} < \eps$, we
obtain: $\pi(x)=\pi(h+(x-h))=\pi(x-h)\in\pi( \eps B)$ and hence $\dis \pi(S)
\subseteq \pi( \eps B)$.\end{example}

The next corollary shows that the class of {\em precompact} UFSS groups is
closed under taking arbitrary quotients.

\begin{corollary}\label{NEW_corollary}
If $G$ is a precompact UFSS group, then every continuous homomorphic image
of $G$ is UFSS.
\end{corollary}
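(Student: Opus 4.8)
The plan is to realize every continuous homomorphic image of $G$ as a topological subgroup of a compact Lie group and then to invoke the single fact that subgroups of UFSS groups are UFSS (Proposition \ref{perm_prop_3}(b)). The whole argument runs through the Ra\u\i kov completion, which converts the delicate identification of quotient/image topologies into a routine application of the open mapping theorem for compact groups.

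First I would pass to the completion of $G$. Since $G$ is precompact, $\widetilde{G}$ is compact, and since $G$ is UFSS, Proposition \ref{perm_prop_3}(a) shows that $\widetilde{G}$ is UFSS as well. Thus $\widetilde{G}$ is a compact UFSS group, so by Remark \ref{cptUFSS} it is a compact Lie group. Now let $\phi\colon G\to H$ be a continuous surjective homomorphism onto a Hausdorff group $H$ (a continuous homomorphic image must be Hausdorff if it is to be UFSS). Composing $\phi$ with the canonical dense embedding $H\hookrightarrow\widetilde{H}$ and using that $\widetilde{H}$ is complete, $\phi$ extends to a continuous homomorphism $\widetilde{\phi}\colon\widetilde{G}\to\widetilde{H}$.

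Next I would show $\widetilde{H}$ is a compact Lie group. The image $\widetilde{\phi}(\widetilde{G})$ is compact, hence closed in the Hausdorff group $\widetilde{H}$, and it contains the dense subgroup $H=\phi(G)$; therefore $\widetilde{\phi}(\widetilde{G})=\widetilde{H}$, so $\widetilde{\phi}$ is onto and $\widetilde{H}$ is compact (in particular $H$ is precompact). As a continuous surjective homomorphism between compact groups, $\widetilde{\phi}$ is open, inducing a topological isomorphism $\widetilde{H}\cong\widetilde{G}/\ker\widetilde{\phi}$. Since $\ker\widetilde{\phi}$ is a closed normal subgroup of the compact Lie group $\widetilde{G}$, the quotient $\widetilde{H}$ is again a compact Lie group, hence UFSS by Remark \ref{cptUFSS}. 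Finally $H$ embeds as a (dense) topological subgroup of its completion $\widetilde{H}$, so $H$ is UFSS by Proposition \ref{perm_prop_3}(b), which is exactly the claim.

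The step I expect to be most delicate is the passage through the completion: one must verify that $\widetilde{\phi}$ is genuinely surjective (handled by compactness of $\widetilde{\phi}(\widetilde{G})$ together with density of $H$) and that $H$ carries precisely the subspace topology inherited from $\widetilde{H}$, so that Proposition \ref{perm_prop_3}(b) is applicable. The supporting inputs I rely on --- that a continuous surjective homomorphism of compact groups is open, and that a quotient of a compact Lie group by a closed normal subgroup is again Lie --- are standard, but worth stating explicitly. A more hands-on alternative that avoids the open mapping theorem would set $N=\ker\phi$ and $M=\overline{N}^{\widetilde{G}}$, check that $M$ is a closed normal subgroup with $M\cap G=N$ and $N$ dense in $M$, and then prove directly that $q_M|_{G}\colon G\to\widetilde{G}/M$ is open onto its image (using that $g+N$ is dense in $g+M$ for each $g\in G$), thereby identifying $G/N$ with a subgroup of the compact Lie group $\widetilde{G}/M$.
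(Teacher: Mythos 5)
Your proposal is correct and follows essentially the same route as the paper: extend the homomorphism to the compact completions, use compactness and density to get surjectivity of the extension, invoke the open mapping theorem to realize $\widetilde{H}$ as a quotient of the compact Lie group $\widetilde{G}$, and conclude via Remark \ref{cptUFSS} and Proposition \ref{perm_prop_3}. Your version merely makes explicit a couple of steps the paper leaves implicit (notably the final appeal to Proposition \ref{perm_prop_3}(b) to descend from $\widetilde{H}$ to $H$).
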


\begin{proof} Let  $f: G \to G_1$ be a continuous surjective homomorphism. It can be extended to the respective compact completions
$f':\wt{G}\to \wt{G_1}$ of $G $ and $G_1$ respectively. Since $f$ is surjective and each group is dense in its completion, the
compactness of $\wt{G}$ yields that $f'$ is surjective. Moreover, $f'$ is open by the open mapping theorem. Hence $\wt{G_1}$ is isomorphic to a quotient of $\wt{G}$.  By Proposition \ref{perm_prop_3}(a) $\wt{G}$ is UFSS, hence (Remark \ref{cptUFSS}) $\wt{G}$ is a Lie group. Then $\wt{G_1}$
is a Lie group as well, so UFSS. This proves that $G_1$ is UFSS.
\end{proof}

\section{GTG sets and UFSS topologies}\label{Lydia_Chapter}
%%%%%%%%%%%%%%%%%%%%%%%%%%%%

%%%%%%%%%%%%%%%%%%%%%%%%%%%%

\subsection{General properties of GTG subsets}

%%%%%%%%%%%%%%%%%%%%%%%%%%%%

Vilenkin \cite{Vi} introduced locally quasi-convex groups while generalizing the
notion of a locally convex space. His definition is inspired on the description of closed symmetric subsets of vector spaces  given by the Hahn-Banach theorem.

Next we present a new generalization of locally convex spaces in the setting of topological groups which we will call {\em locally GTG groups}  where GTG abbreviates
{\bf g}roup {\bf t}opology  {\bf g}enerating (set). Similarly to the notion of a convex set (that depends only on the
linear structure of the topological vector space structure, but not on its topology), the
notion of a GTG set depends only on the algebraic structure
of the group.  In particular, it does not use any dual object at all,
 whereas the notion of quasi-convex set of a topological group $ G $ depends on the topology of $G$ via the continuity
of the characters to be used for the definition of the polar.

The  class  of locally GTG groups will be shown to contain all  locally quasi-convex groups, all locally
pseudoconvex spaces and all UFSS groups. As we will see, it fits very well in the setting of locally
minimal groups as it gives a connection between locally minimal groups and minimal groups (\ref{GTG+locMin}).
Moreover, we are not aware of any locally minimal group not having this property (see Question \ref{LocMin_vs_locGTG}).

Recall that a subset $A$ of a vector space  $E$ is called {\em pseudoconvex}
if $[0,1] A\subseteq A$ and $A+A\subseteq cA$ for suitable $c>0$. One may assume that $c\in \N$.
(Indeed, choose $\N\ni n>c$, then $cA\subseteq nA$ as $ca=(c/n)na\in[0,1] n A
\subseteq nA$ for all $a\in A$.)
Hence the set $A$ is pseudoconvex iff $[0,1] A\subseteq A$ and for some $n\in \N,\;  \frac{1}{n} A+\frac{1}{n} A\subseteq A$.
If $A$ is symmetric, this already implies that $(\frac{1}{n} A)$ forms a
neighborhood basis of a not necessarily Hausdorff group topology: $\frac{1}{nm}A+\frac{1}{nm}A=\frac{1}{m}(\frac{1}{n} A+\frac{1}{n} A)\subseteq \frac{1}{m} A$. A standard argument
shows that scalar multiplication is also continuous.

%Observe that for any symmetric convex subset $A$ of a
%vector space $V$ such that $0\in A$, the sets
%$\; \frac{1}{n} A=(1/n)A\;$ form, as $n$ runs over ${\mathbb N},$ a neighborhood basis
%at $0$ of a (not necessarily Hausdorff) group topology, which is the coarsest
%having $A$ as a neighborhood of zero.

It is well known that the unit balls of the  vector spaces $\ell^s$ where $0<s<1$ are pseudoconvex but not convex. The same can be said of their natural finite-dimensional counterparts  $\ell_n^s$, with $n\ge 2.$ Nevertheless, by far not all symmetric subsets of a vector space are pseudoconvex, as we see in the next example.

\begin{example}
\label{notgtgset} The subsets of  ${\mathbb R}^2$: $U=([-1,1]\times\{0\}) \cup (\{0\}\times [-1,1])$ and $V=(\mathbb R\times\{0\}) \cup (\{0\}\times \mathbb R)$ are symmetric and not pseudoconvex. Observe that $[0,1]U\subseteq U$; $\frac{1}{n}U=([-1/n,1/n]\times\{0\}) \cup (\{0\}\times [-1/n,1/n])$;  $[0,1]V\subseteq V$ and $\frac{1}{n}V=V$.
\end{example}

\begin{definition}\label{definicion_gtg}

Let $G$ be  an abelian group and let $U$ be a symmetric subset of $G$ such
that $0\in U.$ We say that $U$ is a {\em group topology
generating} subset of $G$ (``GTG subset of $G$" for short) if the sequence
of subsets $\{(1/n)U \,:\,n\in \N\}$ is a basis of
neighborhoods of zero for a (not necessarily Hausdorff) group topology ${\mathcal T}_U$ on $G$.
\end{definition}

In case $U$ is a GTG set in $G$, ${\cal T}_U$ is the coarsest group topology on $G$ such that
$U$ is  a neighborhood.

We do not know whether the following natural converse is true:  Let $G$ be an abelian group and $U$ a symmetric subset of $G$ which contains zero and such that there
 exists the coarsest group topology on $G$ for which $U$ is a neighborhood of zero. Then $U$ is a GTG set.

\begin{example}\label{ejemplosgtg}
\begin{itemize}
\item[(a)] Every symmetric distinguished neighborhood of zero in a UFSS group is a GTG set.
\item[(b)] Every subgroup of a group $G$ is a GTG subset of $G$.
\end{itemize}
\end{example}

\begin{proposition} \label{definicion_bis_gtg} A symmetric subset $U\subseteq G$ of an abelian group $G$
is a GTG subset if and only if
  $$
  \exists m\in \N\ \mbox{with} \ (1/m)U + (1/m)U \subseteq U .\eqno(*)
  $$

Moreover, if $U$ is a GTG set, $U_{\infty}=\bigcap_{n=1}^{\infty}(1/n)U$ is the ${\mathcal
T}_U$-closure of $\{0\}$ and in particular, it is a closed subgroup and a $G_{\delta}$ subset of $(G,{\mathcal T}_U)$.
\end{proposition}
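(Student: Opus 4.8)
The plan is to prove the two assertions in turn: first the characterization via $(*)$, and then the identification of $U_\infty$ with the ${\mathcal T}_U$-closure of $\{0\}$.

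For the equivalence I would begin by recording the structural facts about the sets $(1/n)U$ that are automatic from the definition $(1/n)U=\{x:kx\in U,\ 1\le k\le n\}$: each contains $0$; each is symmetric (if $kx\in U$ for $k\le n$ then, since $U=-U$, also $k(-x)=-(kx)\in U$); and they form a decreasing chain $(1/(n{+}1))U\subseteq(1/n)U$, which gives directedness. Hence $\{(1/n)U:n\in\N\}$ is automatically a symmetric filter base of sets containing $0$, and---because $G$ is abelian, so conjugation-invariance holds trivially---the only axiom for a group-topology neighborhood base that is not automatically satisfied is the halving condition: for every $n$ there is $m$ with $(1/m)U+(1/m)U\subseteq(1/n)U$. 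With this reduction, the \emph{only if} direction of $(*)$ is immediate: applying the halving condition to $n=1$ and using $(1/1)U=U$ produces an $m$ with $(1/m)U+(1/m)U\subseteq U$.

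The crux is the \emph{if} direction, where from the single inclusion $(*)$ I must recover the halving condition for every $n$. The computation I expect to carry the proof is
$$(1/(nm))U+(1/(nm))U\subseteq(1/n)U\qquad\text{for all }n.$$
To establish it, take $x,y\in(1/(nm))U$ and fix $k\in\{1,\dots,n\}$. The key observation is that $kx\in(1/m)U$: for each $j\in\{1,\dots,m\}$ one has $j(kx)=(jk)x\in U$, because $1\le jk\le mn$ and $x\in(1/(nm))U$. Likewise $ky\in(1/m)U$, so $(*)$ yields $k(x+y)=kx+ky\in U$; as this holds for all $k\le n$ we conclude $x+y\in(1/n)U$. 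This bookkeeping with the index products $jk\le nm$ is the main (if routine) technical point; once it is in place all neighborhood-base axioms hold and ${\mathcal T}_U$ exists, so $U$ is GTG.

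For the second assertion I would invoke the standard description of the closure of $\{0\}$ in a topological group with a symmetric neighborhood base ${\mathcal B}$ at $0$: a point $x$ lies in the closure iff $-x$, hence (by symmetry) $x$ itself, lies in every member of ${\mathcal B}$, so the closure equals $\bigcap_{V\in{\mathcal B}}V$. Taking ${\mathcal B}=\{(1/n)U\}$ gives $\overline{\{0\}}^{{\mathcal T}_U}=\bigcap_n(1/n)U=U_\infty$, the last equality being the definitional identity noted in the preamble. As a closure it is closed, and as the closure of the subgroup $\{0\}$ in a topological group it is a subgroup. Finally, for the $G_\delta$ property I would choose for each $n$ an open neighborhood $O_n$ of $0$ with $O_n\subseteq(1/n)U$; since every open neighborhood of $0$ contains some basic $(1/m)U$ and therefore contains $U_\infty$, we get $U_\infty\subseteq O_n$ for all $n$, while $\bigcap_n O_n\subseteq\bigcap_n(1/n)U=U_\infty$, so $U_\infty=\bigcap_n O_n$ exhibits it as a countable intersection of open sets. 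I anticipate no genuine obstacle in this second part; all the real content sits in the index computation of the first part.
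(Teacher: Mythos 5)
Your argument is correct and follows essentially the same route as the paper: the heart of both proofs is the inclusion $(1/mn)U+(1/mn)U\subseteq (1/n)U$, obtained by observing that $x\in(1/mn)U$ forces $kx\in(1/m)U$ for $k\le n$ via the index bound $jk\le mn$, and the second assertion is the standard identification $\overline{\{0\}}^{{\mathcal T}_U}=\bigcap_n(1/n)U=U_\infty$. Your treatment of the remaining neighborhood-base axioms and of the $G_\delta$ property is merely more explicit than the paper's, not different in substance.
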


\begin{proof} The given condition  is obviously necessary. Conversely, to prove that addition is
continuous, we are going to see that $(1/mn)U
+ (1/mn)U \subseteq (1/n)U\quad\forall n\in\N .$ Fix $x,\ y\in (1/mn)U$ and observe that $jx,\ jy\in (1/m)U$
for all $1\le j\le n$. This implies $j(x+y)=jx+jy\in (1/m)U+ (1/m)U \subseteq U $ for all $1\le j\le n$ and
hence $x+y\in (1/n)U$.

If $U$ is a GTG set, then ${\mathcal T}_U$ is a group topology of $G$, hence $U_\infty=\overline{\{0\}}^{ {\mathcal T}_U}$ is a subgroup of $G$.
\end{proof}

Proposition \ref{definicion_bis_gtg} gives the possibility to
 define a GTG set in a more precise way. Namely, one can introduce the following
 invariant for a symmetric subset $U\subseteq G$
of an abelian group $G$ with $0\in U$
$$
\gamma(U) := \min \{m\in \N: (1/m)U + (1/m)U \subseteq U\}%, \eqno(\dag)
$$
with the usual convention $\gamma(U) = \infty$ when no such $m$ exists. According to Proposition \ref{definicion_bis_gtg}, $U$ is a GTG set iff
$\gamma(U) <\infty$. Let us call $\gamma(U)$ the {\em GTG-degree} of $U$, it obviously measures the GTG-ness of the symmetric set
$U$ containing 0. Clearly, $U$ has GTG-degree 1 precisely when $U$ is a subgroup.
(Compare this with the {\em modulus of concavity} defined in \cite[3.1]{Rol}.)

\begin{proposition}\label{pcGTG} A symmetric subset $A$ of a vector space $E$ which satisfies  $[0,1]A\subseteq A$ is GTG iff it is pseudoconvex.
 \end{proposition}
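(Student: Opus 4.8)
The plan is to reduce both properties to a single inclusion of dilation form $\frac{1}{n}A+\frac{1}{n}A\subseteq A$, exploiting the standing hypothesis $[0,1]A\subseteq A$ to identify the group-theoretic set $(1/n)A$ with the scalar dilation $\frac{1}{n}A$. This identification is really the whole content of the proposition: once it is in place, the GTG characterization and the pseudoconvexity characterization become word-for-word the same.

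First I would establish $(1/n)A=\frac{1}{n}A$ for every $n\in\N$, the identity recorded in the Notation section for starlike symmetric sets. For completeness one checks both inclusions. If $x=a/n$ with $a\in A$, then for each $k\in\{1,\dots,n\}$ the scalar $k/n$ lies in $[0,1]$, so $kx=(k/n)a\in[0,1]A\subseteq A$; thus $x\in(1/n)A$, giving $\frac{1}{n}A\subseteq(1/n)A$. Conversely, if $x\in(1/n)A$ then in particular $nx\in A$, whence $x=(nx)/n\in\frac{1}{n}A$; this second inclusion needs no hypothesis on $A$. Hence equality holds.

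Next I would combine this with the two available characterizations. By Proposition \ref{definicion_bis_gtg}, the symmetric set $A$ (which contains $0$, since $[0,1]A\subseteq A$ and $A\neq\emptyset$) is GTG if and only if $(1/m)A+(1/m)A\subseteq A$ for some $m\in\N$, i.e., by the identity above, if and only if $\frac{1}{m}A+\frac{1}{m}A\subseteq A$ for some $m$. On the other hand, the discussion preceding Example \ref{notgtgset} shows that, under the hypothesis $[0,1]A\subseteq A$, the set $A$ is pseudoconvex exactly when $\frac{1}{n}A+\frac{1}{n}A\subseteq A$ for some $n\in\N$. These two conditions coincide, which proves the equivalence.

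The only delicate point---and it is slight---is the passage $(1/n)A=\frac{1}{n}A$, which is precisely where $[0,1]A\subseteq A$ is used, namely in the inclusion $\frac{1}{n}A\subseteq(1/n)A$. Dropping starlikeness would break the argument, since in general one retains only $(1/n)A\subseteq\frac{1}{n}A$ and the dilation may be strictly larger, so the GTG condition and the pseudoconvexity condition would no longer match. Beyond this identification no estimates are required.
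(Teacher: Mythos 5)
Your proof is correct and follows exactly the paper's argument: identify $(1/n)A$ with $\frac{1}{n}A$ using starlikeness, then match the criterion of Proposition \ref{definicion_bis_gtg} against the reformulation of pseudoconvexity given before Example \ref{notgtgset}. You have merely written out the details that the paper leaves implicit.
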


\begin{proof} By assumption, $[0,1]A\subseteq A$. This implies that $(1/n)A=\frac{1}{n}A$.
 In the introduction to this section we have already shown that a
symmetric set $A$ is pseudoconvex
if and only if it satisfies $\frac{1}{n}A+\frac{1}{n}A\subseteq A$ for some $n\in \N$. Since
$(1/n)A=\frac{1}{n}A,$ it is a consequence of  \ref{definicion_bis_gtg} that $A$ is pseudoconvex iff it is GTG.
\end{proof}

\begin{example}
 The subsets of $U=([-1,1]\times\{0\}) \cup (\{0\}\times [-1,1])$ and $V=(\mathbb R\times\{0\}) \cup (\{0\}\times \mathbb R)$ of Example \ref{notgtgset} are not GTG sets.
 \end{example}

\begin{remark}
Let  $U$ be a symmetric subset of an abelian group $G$ with $0\in U$.
We analyze the behaviour of the sequence $(1/n)U$ in the following cases of interest:
\begin{itemize}
\item[(a)]  If $U_\infty=\{0\}$, then $U$ is  a GTG set iff $(G,{\cal T}_U)$ is UFSS.
%\footnote{I eliminated the sequence "(observe the set $U$ of Example \ref{notgtgset})."}

\item[(b)]  Now assume that $(1/m)U = U_\infty$ for some $m$. Then $U$ is GTG iff $U_\infty $
is a subgroup. It is clear that $(1/m)U = U_\infty$ is a union of
cyclic subgroups.

We know (Proposition \ref{definicion_bis_gtg}) that if $U$ is a GTG set, then $U_\infty$ must be a subgroup.
 But in this circumstance, we can invert the implication. Indeed, if
$U_\infty=(1/m)U$ is a subgroup, then obviously $(1/m)U+(1/m)U\subseteq (1/m)U\subseteq U$ holds true, so that
$U$ is a GTG set.
% \footnote{I eliminate", by Remark \ref{kgtg}." This does not help here.}
This fact explains once more why the subset $V=V_\infty$ from Example \ref{notgtgset} is not a GTG set
(simply it is not a
subgroup).
  \end{itemize}
  (Note that  we are not considering here the third possibility: $U_\infty\ne \{0\}$ yet the chain $(1/m)U$ does not stabilize.)
\end{remark}

\begin{remark}\label{kgtg}
Let $U$ be a symmetric subset of  a group $G$. Then the following holds true:
\begin{enumerate}
\item[(a)] $(1/n)((1/m)U)=(1/m)((1/n)U)$ for all $n,m\in \N$.
\item[(b)] For symmetric subsets $A$ and $B$ of $G$ and $k\in \N$ we have:
$(1/k)A+(1/k)B\subseteq (1/k)(A+B)$.
\item[(c)] The following assertions are equivalent:
\begin{enumerate}
\item[(i)] $U$ is a GTG set in $G$.
\item[(ii)] For every $k\in\N$ the set $(1/k)U$ is a GTG set in $G$ .
\item[(iii)] There exists $k\in \N$ such that $(1/k)U$ is a GTG set in $G$.
\end{enumerate}
In this case ${\cal T}_U={\cal T}_{(1/k)U}$ for every $k\in\N$.
\end{enumerate}
\end{remark}

\begin{proof}
(a) and (b) are straightforward.

(c) (i) $\Rightarrow$ (ii): Suppose that $(1/m)U+(1/m)U\subseteq U$. This yields
$(1/m)(1/k)U+(1/m)(1/k)U\stackrel{(a)}{=}(1/k)(1/m)U+(1/k)(1/m)U
\stackrel{(b)}{\subseteq}(1/k)[(1/m)U+(1/m)U]\subseteq (1/k)U$ and hence the assertion
follows from Proposition \ref{definicion_bis_gtg}.

(ii) $\Rightarrow$ (iii) is trivial.

(iii) $\Rightarrow$ (i) Let $m$ be such that $(1/m)((1/k)U)+(1/m)((1/k)U)\subseteq (1/k)U$.
Since $(1/mk)U\subseteq (1/m)((1/k)U)$ we deduce
$$ (1/mk)U+(1/mk)U\subseteq (1/k)U\subseteq U$$
and the assertion is a consequence of Proposition \ref{definicion_bis_gtg}.

Finally, assume that $U$ is a GTG set. From $(1/mk)U\subseteq  (1/m)((1/k)U)\subseteq (1/m)U$, we obtain
the equality of the topologies ${\cal T}_U={\cal T}_{(1/k)U}$.
\end{proof}

Next we give  investigate under which conditions intersections and products of GTG sets are GTG.

\begin{lemma}\label{Rem_x}\label{inv_im_GTG}
\begin{itemize}
\item[(a)] Inverse images of GTG sets by group homomorphisms are GTG.
More precisely, if $\phi: G \to H$ is a homomorphism and $A \ni 0$ is a symmetric subset of $H$, then
$\gamma(\phi^{-1}(A))\leq \gamma(A)$. If $A\subseteq \phi(G)$ then $\gamma(\phi^{-1}(A))=\gamma(A)$.
 \item[(b)] If  $\{A_i: i\in I\}$ is a  family of GTG sets of a group $G$ and the subset $\{\gamma(A_i): i\in I\}$ of $\N$ is bounded,
then also $\bigcap_{i\in I} A_i$ is a GTG subset of $G$. In particular, the
intersection of any finite family of GTG sets of $G$ is a GTG set
of $G$.
\item[(c)]
Let $(G_i)_{i\in I}$ be a family of groups and let $A_i$ be a subset of $G_i$  for every $i\in I$. The set $A:= \prod_{i\in
I}A_i\subseteq \prod_{i\in I} G_i$ is a GTG set of $G:=\prod_{i\in I} G_i$ iff all $A_i$ are GTG sets and the subset
$\{\gamma(A_i): i\in I\}$
of $\N$ is bounded.
In particular,
\begin{itemize}
   \item[(c$_1$)] if $I$ is finite then $\prod_{i\in I}A_i$ is GTG iff all the sets $A_i$ are GTG.
   \item[(c$_2$)] for an arbitrary index set $I$,
     $U$ is a GTG set of a group $G$ iff  $U^I$ is a GTG set of $G^I$.
  \end{itemize}
%    Also, if all $A_i$ are GTG subsets and almost all of them are the whole
%    group then
%    $\prod_{i\in I}A_i$ is a GTG subset of $\prod_{i\in I}G_i $.
  \end{itemize}
 \end{lemma}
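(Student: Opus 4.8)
The plan is to reduce the whole statement to the numerical characterization of Proposition \ref{definicion_bis_gtg}: a symmetric $U\ni 0$ is GTG exactly when $\gamma(U)=\min\{m\in\N:(1/m)U+(1/m)U\subseteq U\}<\infty$. I will use two elementary facts repeatedly. First, the sets $(1/n)U$ decrease with $n$ (immediate from $(1/n)U=\{x:kx\in U,\ 1\le k\le n\}$), so $(1/m)U+(1/m)U\subseteq U$ holds for \emph{every} $m\ge\gamma(U)$. Second, the operator $(1/n)(\cdot)$ commutes with the three set-theoretic operations involved: for a homomorphism $\phi:G\to H$ and symmetric $A\ni0$ one has $\phi^{-1}((1/n)A)=(1/n)(\phi^{-1}(A))$ (unwind the definition using $\phi(kx)=k\phi(x)$); for any family, $(1/n)(\bigcap_iA_i)=\bigcap_i(1/n)A_i$; and $(1/n)(\prod_iA_i)=\prod_i(1/n)A_i$. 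For part (a), put $m=\gamma(A)$: if $x,y\in(1/m)(\phi^{-1}(A))=\phi^{-1}((1/m)A)$ then $\phi(x),\phi(y)\in(1/m)A$, so $\phi(x+y)\in(1/m)A+(1/m)A\subseteq A$ and $x+y\in\phi^{-1}(A)$; this gives $\gamma(\phi^{-1}(A))\le\gamma(A)$. If moreover $A\subseteq\phi(G)$, then $(1/m')A\subseteq A\subseteq\phi(G)$ for $m'=\gamma(\phi^{-1}(A))$, so any $a,b\in(1/m')A$ lift to $x,y\in\phi^{-1}((1/m')A)=(1/m')(\phi^{-1}(A))$, whence $a+b=\phi(x+y)\in A$; this yields the reverse inequality, hence equality.

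For part (b), let $m$ bound all $\gamma(A_i)$. By the monotonicity above, $(1/m)A_i+(1/m)A_i\subseteq A_i$ for every $i$. Using the intersection identity, any $x,y\in(1/m)(\bigcap_iA_i)=\bigcap_i(1/m)A_i$ satisfy $x+y\in(1/m)A_i+(1/m)A_i\subseteq A_i$ for all $i$, so $x+y\in\bigcap_iA_i$; thus $\gamma(\bigcap_iA_i)\le m<\infty$. The ``in particular'' clause is immediate, a finite subset of $\N$ being bounded.

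For part (c), the cleanest route is to deduce it from (a) and (b). Let $p_i:G\to G_i$ be the projections and $\iota_i:G_i\to G$ the canonical injections (homomorphisms, the factors being abelian and containing $0$); note $\prod_iA_i=\bigcap_ip_i^{-1}(A_i)$ and $\iota_i^{-1}(\prod_jA_j)=A_i$ (since $0\in A_j$ for $j\ne i$). If $A=\prod_iA_i$ is GTG, part (a) applied to $\iota_i$ gives $\gamma(A_i)=\gamma(\iota_i^{-1}(A))\le\gamma(A)<\infty$, so all $A_i$ are GTG with degrees bounded by $\gamma(A)$. Conversely, if all $A_i$ are GTG with bounded degrees, part (a) applied to the surjection $p_i$ gives $\gamma(p_i^{-1}(A_i))=\gamma(A_i)$ (equality, as $A_i\subseteq p_i(G)=G_i$), and these degrees are bounded, so part (b) makes $A=\bigcap_ip_i^{-1}(A_i)$ GTG. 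Finally (c$_1$) holds because finitely many degrees are automatically bounded, and (c$_2$) because $U^I=\prod_{i\in I}U$ has the single degree $\gamma(U)$, which is bounded iff $\gamma(U)<\infty$.

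All of this is routine once Proposition \ref{definicion_bis_gtg} and the three commutation identities are in hand; the one point needing care is the equality clause of (a), where the hypothesis $A\subseteq\phi(G)$ is exactly what permits lifting elements of $(1/m')A$ along $\phi$. Its correct use is also what distinguishes the two applications of (a) in (c): surjectivity of $p_i$ gives the equality $\gamma(p_i^{-1}(A_i))=\gamma(A_i)$ needed for the converse, whereas $\iota_i$ supplies only the inequality, which is all the forward direction requires.
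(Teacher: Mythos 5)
Your proposal is correct and follows essentially the same route as the paper: everything is reduced to the criterion of Proposition \ref{definicion_bis_gtg} via the commutation of $(1/n)(\cdot)$ with preimages, intersections and products, which is exactly what the paper does (its own proof of (a)--(c) is just a terse citation of these identities). The only cosmetic difference is in (c), where you write $\prod_i A_i=\bigcap_i p_i^{-1}(A_i)$ and funnel the argument through (a) and (b) using the projections and injections, rather than invoking the identity $(1/n)\prod_i A_i=\prod_i(1/n)A_i$ directly; this is a clean and slightly more systematic packaging of the same facts, and your care about where the equality clause of (a) (surjectivity of $p_i$) versus the mere inequality (for $\iota_i$) is needed is a nice touch.
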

\begin{proof}
(a) is a consequence of the identity $\; (1/m)\phi^{-1}(A)=\phi^{-1}((1/m)A).$

(b)  It is straightforward to prove that $(1/m)\bigcap_{i\in I}A_i=\bigcap_{i\in I}(1/m) A_i$.
By our hypothesis we may choose $m$ so large that $(1/m)A_i+(1/m)A_i\subseteq A_i$
for all $i\in I$ and obtain $(1/m)\bigcap_{i\in I}A_i+(1/m)\bigcap_{i\in I}A_i\subseteq  \bigcap_{i\in I}A_i.$
The assertion follows from Proposition \ref{definicion_bis_gtg}.

  (c) follows easily from (a), Proposition \ref{definicion_bis_gtg} and the equality
$(1/n)\prod_{i\in I}A_i=\prod_{i\in  I}(1/n)A_i$.
\end{proof}

\begin{example}\label{nonGTGex}
\begin{itemize}
\item[(a)]
Let ${\mathbb P}$ be the set of all positive primes. For each $p\in {\mathbb P}$ we define the symmetric subset of ${\mathbb Z}$
$$
U_p=\{0\}\cup \{\pm 2^{n_2}3^{n_3}\cdots p^{n_p}: n_2,\,n_3,\cdots , n_p\in {\mathbb N}\cup \{0\}\}.
$$
Note that for $p,\,q\in {\mathbb P},$ we have $(1/q)U_p=U_p$ for $q\le p$ and $(1/q)U_p=\{0\}$ otherwise. This implies that for every
$p\in {\mathbb P},$ $ U_p$ is a GTG set, $(U_p)_{\infty}=\{0\}$ and $U_p+U_p\not\subseteq U_p.$ Hence $p< \gamma(U_p)$.
The subset $U=\prod_{p\in{\mathbb P}}U_p\subseteq {\mathbb Z}^{\mathbb P}$ is symmetric and satisfies $U_{\infty}=\prod_{p\in\P}(U_p)_\infty= \{0\}$, but it is not a GTG set by Lemma \ref{Rem_x}(c).

  Define $V_p:=U_p\times \prod_{q\in \P,\, q\not= p}\Z$. Then for every  $p\in \P$ the sets $V_p$ are GTG, however, their intersection $\bigcap_{p\in\P}V_p=U$ is not GTG as shown above.
\item[(b)]
 A simpler example of a non-GTG intersection of GTG sets can be obtained from the set $U$ of Example \ref{notgtgset}: it is the intersection of all $||\cdot||_{1/n}$-unit balls $U_n$
 in ${\mathbb R}^2$, for $n\in \N.$
\item[(c)]  If $ U_n$  is the subset of $ G_n = \R^2$, as in (b),
%$the $\|.\|_{1/n}$-unit ball in $ G_n = \R^2$,
 then $\gamma(U_n) \to +\infty$. Therefore, $U = \prod_{n\in \N } U_n$ is not a GTG set in $G= (\R^2)^\N$, according to item (c) of Lemma \ref{Rem_x}.
 \end{itemize}
\end{example}

%%%%%%%%%%%%%%%%%%%%%%%%%%%%\NB\begin{example}\label{ex_nogtg} \NB \end{example}

The next proposition  give an intuitive idea about GTG sets:

\begin{proposition}\label{ex_nogtg}
  If $G$ is a compact connected abelian group and $U$ is a GTG set of $G$ with Haar measure 1, then $U=G.$
\end{proposition}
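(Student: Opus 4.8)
The plan is to exploit the defining inequality of a GTG set together with the fact that a set of full Haar measure is ``large'' enough that adding it to itself recovers the whole group. Normalize the Haar measure $\mu$ of $G$ so that $\mu(G)=1$; the hypothesis then reads $\mu(U)=1$, i.e.\ $G\setminus U$ is $\mu$-null. By Proposition \ref{definicion_bis_gtg} there is an $m\in\N$ with $(1/m)U+(1/m)U\subseteq U$, and by definition $(1/m)U=\bigcap_{k=1}^m\phi_k^{-1}(U)$, where $\phi_k\colon G\to G$ denotes the continuous endomorphism $\phi_k(x)=kx$. So it suffices to prove two assertions: (i) $(1/m)U$ again has full measure, and (ii) every $A\subseteq G$ with $\mu(A)=1$ satisfies $A+A=G$. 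Granting these, $G=(1/m)U+(1/m)U\subseteq U\subseteq G$ forces $U=G$.

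For (i) the key point --- and the place where connectedness enters --- is that each $\phi_k$ is measure preserving. Since $G$ is compact, connected and abelian, its dual is torsion-free, so $G$ is divisible; equivalently, each $\phi_k$ is \emph{surjective}. (This fails without connectedness, e.g.\ on $\Z_p$, where multiplication by $p$ is not onto.) For a surjective continuous endomorphism the push-forward $(\phi_k)_*\mu$ is a translation-invariant Borel probability measure on $G$: given $g\in G$ choose $h$ with $kh=g$, so that $\phi_k^{-1}(g+A)=h+\phi_k^{-1}(A)$ and hence $\mu(\phi_k^{-1}(g+A))=\mu(\phi_k^{-1}(A))$. By uniqueness of Haar measure $(\phi_k)_*\mu=\mu$, whence $\mu(\phi_k^{-1}(U))=\mu(U)=1$ for every $k$; in particular each $\phi_k^{-1}(G\setminus U)$ is null. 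As $(1/m)U$ is the \emph{finite} intersection $\bigcap_{k=1}^m\phi_k^{-1}(U)$, its complement is a finite union of null sets, so $\mu((1/m)U)=1$.

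Assertion (ii) is the standard Steinhaus-type observation: for $g\in G$ the map $x\mapsto g-x$ preserves $\mu$, so $\mu(g-A)=\mu(A)=1$, and therefore $\mu\bigl(A\cap(g-A)\bigr)=1>0$. Thus $A\cap(g-A)$ is nonempty; choosing $a$ in it yields $a\in A$ and $g-a\in A$, i.e.\ $g=a+(g-a)\in A+A$. As $g$ was arbitrary, $A+A=G$. Applying this to $A=(1/m)U$ completes the argument.

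The only genuine obstacle is step (i): one must recognize that the hypothesis ``connected'' is precisely what upgrades multiplication by $k$ to a surjective, hence Haar-measure-preserving, endomorphism --- and this is exactly what lets the full-measure property survive the passage from $U$ to $(1/m)U$. Everything else is formal, and the finiteness of $m$ (guaranteed by the GTG hypothesis via Proposition \ref{definicion_bis_gtg}) is what keeps the intersection in step (i) of full measure.
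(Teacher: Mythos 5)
Your proof is correct, and its engine is the same as the paper's: connectedness of a compact abelian group gives divisibility, hence each $x\mapsto kx$ is a surjective continuous endomorphism and therefore Haar-measure preserving, so the full-measure property of $U$ passes to the sets $(1/n)U$. Where you diverge is the endgame. The paper takes the \emph{countable} intersection $U_\infty=\bigcap_n f_n^{-1}(U)$, observes it still has measure $1$, and then invokes the fact (from Proposition \ref{definicion_bis_gtg}) that $U_\infty$ is a \emph{subgroup}; a proper subgroup would have at least two cosets of equal measure, so a full-measure subgroup must be all of $G$, whence $U\supseteq U_\infty=G$. You instead stop at the \emph{finite} intersection $(1/m)U$ furnished by the GTG inequality $(1/m)U+(1/m)U\subseteq U$ and finish with the Steinhaus-type observation that any full-measure set $A$ satisfies $A+A=G$. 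Your route uses the GTG hypothesis only through the quantitative inclusion with finite $m$ and never needs $U_\infty$ to be a subgroup; the paper's route is slightly shorter at the end (coset counting rather than a sumset argument) but leans on the subgroup structure of $U_\infty$. Both are complete; you also supply a self-contained proof of the measure-preservation step that the paper outsources to Halmos. One small point you share with the paper: both arguments tacitly assume $U$ (and hence the preimages $\phi_k^{-1}(U)$) is Haar measurable, which is implicit in the statement ``$U$ has Haar measure $1$.''
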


\begin{proof} For every positive $n$ the map $f_n:G \to G$ defined by $f_n(x)=nx$ is a surjective continuous endomorphism
(such a group $G$ is always divisible, see e. g. \cite[24.25]{HR}). Since
every surjective continuous endomorphism is measure preserving (\cite{Halmos}), one has $\mu(f_n^{-1}(U))=\mu(U)=1$. Therefore, also
$$
U_\infty = \bigcap_n f_n^{-1}(U)
$$
has measure 1. Since  $U_\infty$ is a subgroup, this is possible only when $U_\infty=G$. This yields $U=G.$
\end{proof}

\subsection{Construction of GTG sets and UFSS topologies}

Now we shall propose a general construction for building infinite GTG sets in abelian groups.
In case the group is complete metric, the GTG set can be chosen compact and totally disconnected.

%In the next remark we fix the notation need in the construction.

\begin{remark}\label{remark:NIndep} In the construction we shall need the following sets of sequences of integers:
$$
\fZ=\Z^{\N_0},  \  \   K_m= \left\{(k_j)\in \fZ: \sum_{j=0}^\infty \frac{|k_j|}{2^j}\le \frac{1}{2^m}\right\}
 \ \ \mbox{ for   $\ m\in \Z\ $, and } \  \  \fP = \prod_{j=0}^\infty \{0, \pm 1, \pm 2, \pm 3, \ldots, \pm 2^{j+2}\}
$$
\begin{itemize}
\item[(a)]  Obviously, $K_m \subseteq \fP$ when $m\geq -2$, and  $K_m + K_m \subseteq  K_{m-1}$, for $m\in \Z$.
\item[(b)] We use also the direct sum $\fZ_0=\bigoplus _{\N_0}\Z$.
% and the sets
%$$
%K_{m,0}:=K_m \cap \fZ_0= \left\{(k_j)\in \fZ_0: \sum_{j=0}^\infty\frac{|k_j|}{2^j}\le  \frac{1}{2^m}\right\}  \ \ \mbox{ and } \  \  \fP_0 = \fP \cap \fZ_0.
%$$
For $(a_n) \in  \fZ_0$ and any sequence $(x_n)$ of elements of $G$ the sum $\sum_{j=0}^\infty a_jx_j$
makes sense and will be used in the sequel.  In this way, every element  ${\mathbf x}= (x_n)\in G^ {\N_0}$
gives rise to a group homomorphism  $\varphi_{\mathbf x}: \fZ_0\longrightarrow G$
defined by $\varphi_{\mathbf x}((a_n)) := \sum_{j=0}^\infty a_jx_j$ for $(a_n) \in  \fZ_0$.
\item[(c)] $\fZ$ will be equipped with the product topology, where $\Z$ has the discrete topology with
basic open neighborhoods of 0 the subgroups
$$
W_n= \{(k_j)\in \fZ: k_0=k_1=\ldots=k_n=0\},
$$
$n\in \N_0.$
Thus, $\fP$ is a compact zero-dimensional subspace of $\fZ$. Let us see that $K_m$ is closed in $\fP$
for $m\in \Z$, hence a compact zero-dimensional space
on its own account. Indeed, pick $\xi= (k_j)_{j\ge 0}\in \fP \setminus K_m$. Then
$ \sum_{j\ge 0}\frac{|k_j|}{2^j}>  \frac{1}{2^m}$, so $ \sum_{j=0}^n
\frac{|k_j|}{2^j}>  \frac{1}{2^m}$ for some index $n$.  Hence the neighborhood  $(\xi + W_n)\cap \fP$
%\prod_{j=0}^{j_0}\{k_j\}\times \prod_{j>j_0}\{0, \pm 1, \pm 2, \pm 3, \ldots, \pm 2^{j}\}$
of $\xi$ misses the set $K_m$.
\end{itemize}
\end{remark}

A sequence $(x_n)_{n\ge 0}$ in $G$ will be called {\em nearly independent},  if  it satisfies
$$
\sum_{j=0}^\infty a_jx_j = 0\quad \Longrightarrow  \quad  (a_n) =0 \;\;\;\;  \mbox{ for all } \;\; \;\;  (a_n)\in {\mathfrak P}\cap \fZ_0.
\eqno(4)
$$
%i.e., if $\ker \varphi_{\mathbf x} \cap {\mathfrak P}_0 = 0$.
This term is motivated by the fact, that usually a sequence $(x_n)_{n\ge 0}$ in $G$ is called
independent, if $\ker \varphi_{\mathbf x} = 0$.

\begin{claim}\label{Claim1}
If $G$ is an abelian group and ${\mathbf x}= (x_n)$ is a nearly independent sequence of $G$, then $\varphi_{{\mathbf x}} \restriction_{K_{-1}\cap \fZ_0 }: K_{-1}\cap \fZ_0 \to G$
is injective.
\end{claim}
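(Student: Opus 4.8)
The plan is to exploit that $\varphi_{\mathbf x}$ is a group homomorphism, so its injectivity on the symmetric set $K_{-1}\cap \fZ_0$ is equivalent to the assertion that $\ker \varphi_{\mathbf x}$ meets the difference set $(K_{-1}\cap \fZ_0)-(K_{-1}\cap \fZ_0)$ only in $0$. Concretely, I would take $\alpha=(a_n),\,\beta=(b_n)\in K_{-1}\cap\fZ_0$ with $\varphi_{\mathbf x}(\alpha)=\varphi_{\mathbf x}(\beta)$, put $\gamma=\alpha-\beta$, and aim to deduce $\gamma=0$ from $\varphi_{\mathbf x}(\gamma)=0$.

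The crux is the containment
$$
(K_{-1}\cap\fZ_0)-(K_{-1}\cap\fZ_0)\subseteq \fP\cap \fZ_0 .
$$
Since $K_{-1}$ is symmetric, $\gamma$ lies in $K_{-1}+K_{-1}$, and by Remark \ref{remark:NIndep}(a) we have $K_{-1}+K_{-1}\subseteq K_{-2}\subseteq \fP$, the last inclusion being the case $m=-2$ of that remark. The reason $K_{-2}\subseteq\fP$ holds is exactly that, for $(k_j)\in K_{-2}$, each single term obeys $\frac{|k_j|}{2^j}\le \sum_i\frac{|k_i|}{2^i}\le 4=2^2$, i.e. $|k_j|\le 2^{j+2}$, which is the defining bound of $\fP$. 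Moreover $\gamma\in\fZ_0$, being a difference of two finitely supported sequences.

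With this in hand the claim is immediate: $\gamma\in\fP\cap\fZ_0$ and $\varphi_{\mathbf x}(\gamma)=0$, so the near-independence hypothesis $(4)$ forces $\gamma=0$, i.e. $\alpha=\beta$. No step here is genuinely difficult; the only point deserving attention is the matching of constants $K_{-1}+K_{-1}\subseteq \fP$, and this is precisely the reason the bound $\pm 2^{j+2}$ was built into the definition of $\fP$ in Remark \ref{remark:NIndep}. Getting this alignment right is essentially the whole content of the statement.
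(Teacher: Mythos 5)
Your argument is correct and is essentially the paper's own proof: both reduce injectivity to showing that the difference of two elements of $K_{-1}\cap\fZ_0$ with equal image lies in $\fP\cap\fZ_0$ (via the bound $|k_j-l_j|\le 2^{j+2}$, which you obtain through $K_{-1}+K_{-1}\subseteq K_{-2}\subseteq\fP$) and then invoke near independence (4). Your version just makes the constant-matching slightly more explicit.
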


\begin{proof}
Assume that $\varphi_{\mathbf x}((k_j))=\varphi_{\mathbf x}((l_j))$ for $(k_j)\in K_{-1}$ and $(l_j)\in K_{-1}$.
Then $0=\sum_{j=0}^n(k_j-l_j)x_j$ and $|k_j-l_j|\le 2^{j+2}$, combined with near independence,  imply $k_j=l_j$ for all $j$.
\end{proof}

The following lemma reveals a sufficient condition under which an abelian group $G$ admits a non-discrete UFSS group topology, namely the existence of a nearly independent sequence. The necessity of this condition will be established at a later stage (see Corollary \ref{LastCorollary}).

\begin{lemma}\label{LemmaUFSS}
Let $G$ be an abelian group and let ${\mathbf x}= (x_n)$ be a
nearly independent sequence of $G$. Then  the set
$
X:= \varphi_{\mathbf x}(K_0 \cap \fZ_0)
%=\left\{\sum_{j=0}^nk_jx_j:\ n\in\N,\ k_j\in\Z,\ \sum_{j=0}^n\frac{|k_j|}{2^j}\le 1\right\}
$
is a GTG subset of $G$ with $\gamma(X)=2$. More precisely,
$$
(1/2^m)X=\varphi_{\mathbf x}(K_m \cap \fZ_0)= \left\{\sum_{j=0}^nk_jx_j:\ n\in\N,\ k_j\in\Z,\ \sum_{j=0}^n\frac{|k_j|}{2^j}\le \frac{1 }{2^m}\right\},\eqno(5)
$$
$X_\infty =0$ and $(x_n)$ tends to $0$ in ${\cal T}_X$, so ${\cal T}_X$ is a non-discrete UFSS topology.
\end{lemma}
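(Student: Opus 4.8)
The plan is to prove every clause by transporting the problem into the sequence space through the homomorphism $\varphi_{\mathbf x}$, with the injectivity of Claim~\ref{Claim1} as the main lever. The heart of the matter is the displayed identity (5): once $(1/2^m)X=\varphi_{\mathbf x}(K_m\cap\fZ_0)$ is established, the third expression in (5) is merely the unravelling of $\varphi_{\mathbf x}(K_m\cap\fZ_0)$ (elements of $\fZ_0$ have finite support, and $(k_j)\in K_m$ means exactly $\sum_j|k_j|2^{-j}\le 2^{-m}$), and all remaining assertions drop out. Throughout I abbreviate $\|(k_j)\|:=\sum_{j\ge0}|k_j|2^{-j}$, so that $K_m\cap\fZ_0=\{\mathbf k\in\fZ_0:\|\mathbf k\|\le 2^{-m}\}$.

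For the easy inclusion $\varphi_{\mathbf x}(K_m\cap\fZ_0)\subseteq(1/2^m)X$ I would argue by scaling: if $y=\varphi_{\mathbf x}(\mathbf k)$ with $\|\mathbf k\|\le 2^{-m}$, then for each $k\in\{1,\dots,2^m\}$ one has $\|k\mathbf k\|=k\|\mathbf k\|\le1$, so $k\mathbf k\in K_0\cap\fZ_0$ and $ky=\varphi_{\mathbf x}(k\mathbf k)\in X$; this is exactly $y\in(1/2^m)X$. The reverse inclusion is the main obstacle, and I expect to treat it by a bootstrapping induction that feeds Claim~\ref{Claim1} at each step. Given $y\in(1/2^m)X$, taking $k=1$ yields $y=\varphi_{\mathbf x}(\mathbf k)$ with $\mathbf k\in K_0\cap\fZ_0$, i.e. $\|\mathbf k\|\le1$. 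I then prove by induction on $r=0,1,\dots,m$ that $\|\mathbf k\|\le 2^{-r}$ for this fixed $\mathbf k$. In the inductive step (with $r<m$) the element $2^{r+1}y$ lies in $X$, say $2^{r+1}y=\varphi_{\mathbf x}(\mathbf l)$ with $\mathbf l\in K_0$; but $2^{r+1}\mathbf k$ is a second preimage with $\|2^{r+1}\mathbf k\|\le2^{r+1}2^{-r}=2$, so both $2^{r+1}\mathbf k$ and $\mathbf l$ lie in $K_{-1}\cap\fZ_0$, where $\varphi_{\mathbf x}$ is injective by Claim~\ref{Claim1}. Hence $2^{r+1}\mathbf k=\mathbf l\in K_0$, giving $\|\mathbf k\|\le 2^{-(r+1)}$. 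At $r=m$ this places $\mathbf k$ in $K_m\cap\fZ_0$, so $y\in\varphi_{\mathbf x}(K_m\cap\fZ_0)$. The two facts that make this close are that $K_{-1}$ is precisely the range on which injectivity is available (differences of its elements land in $\fP$) and the nesting $K_m+K_m\subseteq K_{m-1}$ recorded in Remark~\ref{remark:NIndep}.

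With (5) in hand the rest is routine. For $\gamma(X)=2$: by (5) at $m=1$ together with $K_1+K_1\subseteq K_0$ one gets $(1/2)X+(1/2)X=\varphi_{\mathbf x}(K_1\cap\fZ_0)+\varphi_{\mathbf x}(K_1\cap\fZ_0)\subseteq\varphi_{\mathbf x}(K_0\cap\fZ_0)=X$, so $\gamma(X)\le2$ by Proposition~\ref{definicion_bis_gtg}; and $\gamma(X)\ne1$ because $X$ is not a subgroup, as $2x_0\notin X$: a putative representation $2x_0=\varphi_{\mathbf x}(\mathbf k)$ with $\mathbf k\in K_0$ gives $\varphi_{\mathbf x}(\mathbf k-2e_0)=0$ with $\mathbf k-2e_0\in\fP\cap\fZ_0$, so near independence forces $\mathbf k=2e_0\notin K_0$, a contradiction (here $e_n$ denotes the $n$-th unit vector). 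For $X_\infty=\{0\}$: since the sets $(1/2^m)X$ are cofinal among the decreasing family $(1/n)X$, we have $X_\infty=\bigcap_m\varphi_{\mathbf x}(K_m\cap\fZ_0)$, and injectivity on $K_{-1}$ shows that any $y\in X_\infty$ has a single preimage $\mathbf k$ lying in every $K_m$, so $\|\mathbf k\|\le2^{-m}$ for all $m$ and thus $\mathbf k=0$. For the convergence $x_n\to0$ in ${\mathcal T}_X$, note $x_n=\varphi_{\mathbf x}(e_n)$ and $e_n\in K_m\cap\fZ_0$ whenever $n\ge m$, so $x_n\in(1/2^m)X$ for $n\ge m$. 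Finally, $X$ is GTG with $X_\infty=\{0\}$, so the ${\mathcal T}_X$-closure of $\{0\}$ is trivial by Proposition~\ref{definicion_bis_gtg}; hence ${\mathcal T}_X$ is a Hausdorff group topology with the $(1/n)X$ as a neighborhood base, which is exactly UFSS, and it is non-discrete because the elements $x_n$, nonzero by near independence, converge to $0$.
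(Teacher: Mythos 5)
Your proof is correct and follows essentially the same route as the paper: both establish the identity (5) by an induction that repeatedly feeds the injectivity of $\varphi_{\mathbf x}$ on $K_{-1}\cap\fZ_0$ (Claim \ref{Claim1}), and then read off $\gamma(X)\le 2$, $X_\infty=\{0\}$, and $x_n\to 0$ from (5) together with $K_1+K_1\subseteq K_0$. The only (immaterial) differences are organizational: you bootstrap the norm of a single fixed preimage down from $1$ to $2^{-m}$, where the paper inducts on $m$ over all $K_0$-representations by doubling $x$ at each step, and you rule out $\gamma(X)=1$ by exhibiting $2x_0\notin X$ directly rather than via the paper's observation that $X=X_\infty=\{0\}$ would force discreteness.
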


\begin{proof} The inclusion $\supseteq$ in (5) is obvious. We prove the following stronger version of the reverse inclusion by induction:
$$
\mbox{if }\; x=\sum_{j=0}^nk_jx_j \in X\mbox{ with} \; (k_j) \in K_0,  \mbox{ then } x\in (1/2^m)X \;\; \Longrightarrow \;\; (k_j) \in K_m\eqno(6)
$$
For $m=0$ the assertion is trivial. So suppose (6) holds true for $m$ and let $x=\sum_{j=0}^nk_jx_j \in (1/2^{m+1})X$,  with
$(k_j) \in K_0$. Since $x,2x\in (1/2^m)X$,  our the induction hypothesis gives $(k_j) \in K_m$.
% i.e.,  $\sum_{j=0}^n\frac{|k_j|}{2^j}\le \frac{1}{2^m}$.
 Moreover, there exists a  representation $2x=\sum_{j=0}^n l_jx_j$ with $(l_j) \in K_m$, i.e.,
$\sum_{j=0}^n\frac{|l_j|}{2^j}\le \frac{1}{2^m}$. (Observe that without loss of generality we may assume that the upper index for the summation may be assumed to be equal for $x$ and $2x$.) Then $\varphi_{\mathbf x}((2k_j)) = \varphi_{\mathbf x}((l_j))$ with $(2k_j), (l_j) \in K_{-1}$, so Claim \ref{Claim1} applies $2k_j=l_j$ for all $j$ and hence $\sum_{j=0}^n\frac{|k_j|}{2^j}= \sum_{j=0}^n\frac{|l_j|}{2^{j+1}} \le \frac{1}{2^{m+1}}.$ This proves (6), and consequently also (5). Obviously, (6) yields also $X_\infty=\{0\}$.

 For $m=1$ the equation (5) and $K_1 + K_1 \subseteq K_0$ give  $(1/2)X+(1/2)X\subseteq X$. Hence $\gamma(X)\le 2$, and consequently $X$ is a GTG set and ${\cal T}_X$ is a UFSS topology.

For a fixed $N \in \N$ the definition of $X$ and (5) give $x_n \in (1/2^N)X$ for all $n\geq N$, so $\{x_n:\ n\ge N\}\subseteq(1/2^N)X $. This shows that $x_n\to 0$ in ${\cal T}_X$ and so ${\cal T}_X$ is not discrete.

Finally, to prove that $\gamma(X)\ge 2$ it  suffices to observe that $\gamma(X)= 1$ would imply that $X$ is a subgroup, so $X=X_\infty$. Now $X_\infty=\{0\}$ contradicts the non-discreteness of  ${\cal T}_X$.
\end{proof}

Let $(G,d)$ be a metric abelian group, let $v$ be the group seminorm associated to the  metric
$d$ (i.e., $v(x) = d(x,0)$ for $x\in G$) and let $B _\eps=\{x\in G:\ v(x)\le\eps\}$ be the closed
disk with radius $\eps$ around 0. For a nearly independent sequence  $(x_n)$ of $G$ and a non-negative $n\in \Z$ let
$$
\eps_n :=\min\left\{v\left(\sum_{j=0}^{n}a_j x_j\right):\ |a_j|\le 2^{j+2}, \ (a_j)\not=(0)\right\}>0. \eqno(7)
$$
We call the sequence $(x_n)$ {\em almost independent}, if the inequality
$$
 2^{n+3}v(x_{n+1})<\eps_n\le v(x_{n}) \eqno(8)
$$
 holds. Note that $\eps_n\le v(x_{n})$ obviously follows from the definition of $ \eps_n$.
Moreover, every almost independent sequence  (rapidly) converges to 0 in $(G,d)$.

It is straight forward to prove that a subsequence of a strictly, respectively almost independent
sequence is again strictly, respectively almost independent.
The motivation to introduce the sharper notion of almost independent sequence is given in the
lemma below. First we need to isolate a property that will be frequently used in the sequel:

\begin{claim}\label{Claim2} If  $(G,d)$ is a metric group and $(x_n)$ is an almost independent sequence of $G$, then $\varphi_{\mathbf x}(K_{m}\cap W_n \cap \fZ_0)\subseteq B_{\frac{v(x_n)}{2^{m+2}}}$ for any $m \in \Z$ and $n\geq 0$.
\end{claim}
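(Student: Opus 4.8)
The plan is to unwind all three membership conditions on a sequence $(k_j)\in K_m\cap W_n\cap \fZ_0$ and then estimate $v(\varphi_{\mathbf x}((k_j)))$ by subadditivity of $v$, splitting each summand so that the two available hypotheses act on disjoint factors. Concretely, membership in $W_n$ forces $k_0=\cdots=k_n=0$, membership in $\fZ_0$ makes $\varphi_{\mathbf x}((k_j))=\sum_{j>n}k_jx_j$ a finite sum, and membership in $K_m$ together with $W_n$ gives $\sum_{j>n}\frac{|k_j|}{2^j}\le \frac{1}{2^m}$. Since $v$ is a group seminorm, $v\bigl(\sum_{j>n}k_jx_j\bigr)\le \sum_{j>n}|k_j|\,v(x_j)$, so it suffices to bound this last sum by $v(x_n)/2^{m+2}$.

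The decisive step is the factorization $|k_j|\,v(x_j)=\frac{|k_j|}{2^j}\cdot\bigl(2^jv(x_j)\bigr)$: the first factor is exactly what the $K_m$-constraint controls (its sum over $j>n$ is at most $2^{-m}$), while the second is controlled by almost independence. First I would record the elementary consequence of $(8)$ that $2^{j+2}v(x_j)<v(x_{j-1})$ for every $j\ge 1$ (this is just $2^{(j-1)+3}v(x_j)<\eps_{j-1}\le v(x_{j-1})$ after reindexing). Because $2^{j+2}>1$, this immediately shows that $(v(x_j))$ is strictly decreasing, whence $v(x_{j-1})\le v(x_n)$ whenever $j-1\ge n$. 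Combining the two facts gives $2^{j+2}v(x_j)\le v(x_n)$, i.e. $2^jv(x_j)\le v(x_n)/4$, for all $j\ge n+1$.

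Plugging this uniform bound into the factorized sum would then finish the argument:
$$v\Bigl(\sum_{j>n}k_jx_j\Bigr)\le \sum_{j>n}\frac{|k_j|}{2^j}\,\bigl(2^jv(x_j)\bigr)\le \frac{v(x_n)}{4}\sum_{j>n}\frac{|k_j|}{2^j}\le \frac{v(x_n)}{4}\cdot\frac{1}{2^m}=\frac{v(x_n)}{2^{m+2}},$$
so $\varphi_{\mathbf x}((k_j))\in B_{v(x_n)/2^{m+2}}$, as required.

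The only real obstacle is finding the right way to decouple the two hypotheses; the naive estimate $v(x_j)\le v(x_{n+1})$ combined with $|k_j|\le 2^{j-m}$ fails, because the resulting terms $2^{j-m}v(x_{n+1})$ grow with $j$ and cannot be summed. The geometric weight $2^{-j}$ built into the definition of $K_m$ is precisely calibrated to cancel the factor $2^j$ that one needs in order to exploit the almost-independence decay $2^{j+2}v(x_j)<v(x_{j-1})$, and recognizing this pairing is the heart of the matter. Everything else (subadditivity of $v$, the reindexing of $(8)$, and monotonicity of $v(x_j)$) is routine.
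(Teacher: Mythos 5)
Your proof is correct and follows essentially the same route as the paper's: subadditivity of $v$, the factorization $|k_j|v(x_j)=\frac{|k_j|}{2^j}\cdot 2^jv(x_j)$, the reindexed form $2^{j+2}v(x_j)<v(x_{j-1})$ of (8), and the monotonicity $v(x_{j-1})\le v(x_n)$ for $j>n$ are exactly the steps in the paper's chain of inequalities. No gaps.
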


\begin{proof} We have to prove that
$
v\left(\sum_{j=n+1}^kk_jx_j\right) <  \frac{1}{2^{m+2}}v(x_n),   \mbox{ whenever } \  (k_j) \in K_m.
% \ \mbox{ i.e.,  } \ \sum_{j=0}^n\frac{|k_j|}{2^j}\le \frac{1 }{2^m}. \eqno(10)
$
This follows applying (8) to the term $2^j v(x_j)$ in
%$j_0\leq k$ and $x=y_k-y_{j_0}=\sum_{j=j_0+1}^kk_jx_j \in G$ one has  due to the following
%estimates: for $X \ni x=\sum_{j>j_0}k_jx_j$ with $\sum_{j>j_0}\frac{|k_j|}{2^j}\le 1$
$$
v\left(\sum_{j=n+1}^kk_jx_j\right)\le \sum_{j=n+1}^k |k_j| v(x_j) = \sum_{j=n+1}^k   \frac{|k_j|}{2^j} {2^j} v(x_j)<  \sum_{j=n+1}^k  \frac{|k_j|}{2^j} \frac{1}{4}v(x_{j-1})
\leq \frac{1}{4}v(x_{n}) \sum_{j=n+1}^k  \frac{|k_j|}{2^j}\leq  \frac{1}{2^{m+2}}v(x_{n}).
 $$
\end{proof}

\begin{lemma}\label{Lydia_Lemma} Let $(G,d)$ be a metric group and let  $(x_n)$ be an almost independent sequence of $G$. Then
\begin{itemize}
\item[(a)]  the non-discrete UFSS topology ${\mathcal T}_{X}$ generated by the  GTG set $X$ of  $G$
corresponding to $(x_n)$ as in Lemma \ref{LemmaUFSS}, is
 finer than the original topology of $G$;
\item[(b)] the subsequence $(x_{2n})$ is still almost independent, and for the GTG set $Y$ of  $G$ corresponding to $(x_{2n})$ as in Lemma \ref{LemmaUFSS},
   ${\mathcal T}_{X}< {\mathcal T}_{Y}$.
\end{itemize}
\end{lemma}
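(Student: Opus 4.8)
The plan is to exploit the quantitative estimate provided by Claim \ref{Claim2}, which is exactly the bridge between the algebraic GTG-structure coming from Lemma \ref{LemmaUFSS} and the metric structure of $(G,d)$. For part (a), I must show that every original neighborhood $B_\eps$ contains some $(1/2^m)X$, which amounts to the reverse comparison of topologies. Fix $\eps>0$ and choose $m$ so large that $\frac{v(x_0)}{2^{m+2}}<\eps$. Taking $n=0$ in Claim \ref{Claim2} gives $\varphi_{\mathbf x}(K_m\cap W_0\cap\fZ_0)\subseteq B_{v(x_0)/2^{m+2}}\subseteq B_\eps$; the subtlety is that $W_0$ excludes the $x_0$-coordinate, so I still need to control the elements of $(1/2^m)X$ whose $k_0$ is nonzero. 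Here I would split a general element $\sum_{j=0}^n k_jx_j\in (1/2^m)X$ (so $(k_j)\in K_m$ by (5)) as $k_0x_0 + \sum_{j\ge 1}k_jx_j$: since $(k_j)\in K_m$ forces $|k_0|/2^0\le 1/2^m$, for $m\ge 1$ we get $k_0=0$ automatically, so in fact $(1/2^m)X\subseteq B_\eps$ for $m$ large. This establishes that the original topology is coarser than ${\mathcal T}_X$; combined with Lemma \ref{LemmaUFSS} (which already gives that ${\mathcal T}_X$ is a non-discrete UFSS topology), part (a) follows. I expect the only real care to be in matching the index ranges in Claim \ref{Claim2} against the defining condition $(k_j)\in K_m$ from (5).

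For part (b), the almost-independence of $(x_{2n})$ is routine once I observe that passing to a subsequence only strengthens the gap inequality (8): the quantities $\eps_n$ defined in (7) for the subsequence are infima over a smaller index set of the same seminorm values, and the rapid-decay condition $2^{n+3}v(x_{n+1})<\eps_n$ for the full sequence implies the analogous (in fact stronger) inequality for consecutive terms $x_{2n},x_{2n+2}$ of the subsequence. I would state this as the already-announced fact that subsequences of almost independent sequences are almost independent, so $Y:=\varphi_{\mathbf x'}(K_0\cap\fZ_0)$ with $\mathbf x'=(x_{2n})$ is again a GTG set generating a non-discrete UFSS topology by Lemma \ref{LemmaUFSS}, and by part (a) applied to $(x_{2n})$, ${\mathcal T}_Y$ is finer than the original topology of $G$.

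The heart of part (b), and the step I expect to be the main obstacle, is the strict comparison ${\mathcal T}_X<{\mathcal T}_Y$. The inclusion ${\mathcal T}_X\le {\mathcal T}_Y$ should follow because $Y$ is built from a subsequence: each generator $x_{2n}$ of $Y$ lies in $X$ (indeed $x_{2n}\in (1/2^{2n})X$ by the last part of Lemma \ref{LemmaUFSS}), so $x_{2n}\to 0$ in ${\mathcal T}_X$, and more strongly I would show $Y\subseteq X$, giving ${\mathcal T}_X\le{\mathcal T}_Y$. For the \emph{strictness} I must exhibit a neighborhood of $0$ in ${\mathcal T}_Y$ that is not a ${\mathcal T}_X$-neighborhood, equivalently a sequence converging to $0$ in ${\mathcal T}_X$ but not in ${\mathcal T}_Y$. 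The natural candidate is the omitted subsequence: the odd-indexed generators $x_{2n+1}$ tend to $0$ in ${\mathcal T}_X$ (since $x_k\in (1/2^k)X$ for all $k$), whereas they should \emph{not} tend to $0$ in ${\mathcal T}_Y$, because $Y$ only ``sees'' the even-indexed directions and near-independence should keep the $x_{2n+1}$ bounded away from $0$ in the $Y$-topology. Making this last assertion precise is the delicate point: I would use the injectivity statement of Claim \ref{Claim1} (applied to the subsequence $\mathbf x'$) together with the description (5) of $(1/2^m)Y$ to argue that no $x_{2n+1}$ can be written as $\varphi_{\mathbf x'}((k_j))$ with $(k_j)\in K_m$ for large $m$ — indeed $x_{2n+1}$ is not in the image $\varphi_{\mathbf x'}(\fZ_0)$ at all, by near-independence of the full sequence $(x_n)$. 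Hence each $x_{2n+1}\notin (1/2^m)Y$ for every $m$, so $(x_{2n+1})$ fails to converge to $0$ in ${\mathcal T}_Y$, witnessing ${\mathcal T}_X<{\mathcal T}_Y$.
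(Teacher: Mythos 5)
Your proposal is correct and follows essentially the same route as the paper: part (a) rests on Claim \ref{Claim2} (you fix $n=0$ and let $m$ grow where the paper takes $n=m-1$; both work since $K_m\subseteq W_{m-1}$), and part (b) gets ${\mathcal T}_X\le{\mathcal T}_Y$ from $Y\subseteq X$ and strictness from the omitted terms $x_{2n+1}$, which near-independence/Claim \ref{Claim1} keeps out of $Y$. One small caution: your parenthetical assertion that $x_{2n+1}$ lies outside $\varphi_{{\mathbf x}'}(\fZ_0)$ entirely does not follow from near-independence (which only constrains coefficient sequences lying in $\fP$), but the statement you actually need and state --- $x_{2n+1}\notin (1/2^m)Y$, whose putative representations have coefficients in $K_m\subseteq\fP$ --- is exactly what the paper proves and is correctly justified.
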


\begin{proof} (a) We have to prove that for a given $\eps>0$, there exists $m\in \N$ such that $(1/2^{m})X\subseteq B_\eps$.
 Since $x_n\to 0$ in the metric topology, there exists $m\in \N$ such that $\frac{v( x_{m-1}) }{2^{m+2}}<\eps$.
As $K_{m}\subseteq W_{m-1}$ and $ (1/2^{m})X= \varphi_{\mathbf x}(K_{m}\cap \fZ_0)$, from
Claim \ref{Claim2} we obtain  $(1/2^{m})X= \varphi_{\mathbf x}(K_{m}\cap \fZ_0)=\varphi_{\mathbf x}(K_{m}\cap W_{m-1}\cap \fZ_0)\subseteq  B_\eps$.

(b) By Lemma \ref{LemmaUFSS}, ${\cal T}_Y$ is a UFSS  topology on $G$. Since $Y\subseteq X$, we trivially have ${\cal T}_Y\supseteq {\cal T}_X$.
It remains to be shown that ${\cal T}_Y$ is strictly finer than ${\cal T}_X$.  Let us prove that the ${\cal T}_X$ null--sequence $(x_{2n+1})$ does not converge to $0$ in ${\cal T}_Y$. It is enough to show that $\{x_{2n+1}:\ n\in\N\}\cap Y=\emptyset$. So assume $x_{2m+1}=\sum_{j=0}^nk_jx_{2j}$ for some $m\in\N$ and $(k_j)\in K_0$.  As $x_{2m+1} \in \varphi_{\mathbf x}(K_0)$ as well, this contradicts Claim \ref{Claim1}.
\end{proof}

In the next theorem we show that the set $X$ from the previous lemmas,
corresponding to an almost independent sequence of $G$, has a compact totally disconnected closure when $G$ is complete.

\begin{theorem}\label{Lydia1} Let $(G,d)$ be a complete metric group and let  $(x_n)$ be an
almost independent sequence of $G$. Then the closure   $\wt{X}$ of the GTG set $X$  corresponding
to $(x_n)$ as in Lemma \ref{LemmaUFSS}, is compact and totally disconnected.
Moreover,  $\wt{X}$ is a GTG set with $\gamma(\wt{X})=2$, so ${\cal T}_{\wt{X}}$ is a non-discrete
UFSS topology finer than the original topology of $G$.
\end{theorem}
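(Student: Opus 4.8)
The plan is to realise $\wt X$ as a continuous image of the compact zero--dimensional set $K_0$ and then to transport the combinatorial bookkeeping carried by the sets $K_m$ onto $\wt X$. First I would extend $\varphi_{\mathbf x}$ from $\fZ_0$ to the whole compact space $\fP$. For $(a_j)\in\fP$ almost independence (8) gives $2^{j+2}v(x_j)<v(x_{j-1})$, so $\sum_j|a_j|v(x_j)\le\sum_j 2^{j+2}v(x_j)<\infty$; as $(G,d)$ is complete, the partial sums converge and $\varphi_{\mathbf x}((a_j)):=\sum_j a_jx_j$ is defined on all of $\fP$. A direct tail estimate of the flavour of Claim \ref{Claim2} (if two elements of $\fP$ agree on the first $n+1$ coordinates, the images differ by at most $\sum_{j>n}2^{j+3}v(x_j)\to 0$) shows that $\varphi_{\mathbf x}$ is uniformly continuous on $\fP$, hence on each compact $K_m$. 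Since $K_0\cap\fZ_0$ is dense in $K_0$ and $\varphi_{\mathbf x}(K_0)$ is compact, hence $d$--closed, I obtain $\wt X=\ol X=\varphi_{\mathbf x}(K_0)$ (and more generally $\ol{(1/2^m)X}=\varphi_{\mathbf x}(K_m)$ for $m\ge 0$); in particular $\wt X$ is compact.

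For total disconnectedness I claim that $\varphi_{\mathbf x}\restriction_{K_0}$ is injective, so that, being a continuous bijection from the compact totally disconnected $K_0$ onto the Hausdorff space $\wt X$, it is a homeomorphism and $\wt X$ is totally disconnected. Injectivity reduces to showing that $d\in K_{-1}$ with $\varphi_{\mathbf x}(d)=0$ forces $d=0$, since differences of elements of $K_0$ lie in $K_{-1}$. This is the infinite--support strengthening of Claim \ref{Claim1}: if $d\ne0$ has least nonzero index $n$, then $v(d_nx_n)\ge\eps_n$ by the definition of $\eps_n$, while $d\in K_{-1}$ yields $|d_j|\le 2^{j+1}$ and hence, using (8), $v\bigl(\sum_{j>n}d_jx_j\bigr)\le\sum_{j>n}2^{j+1}v(x_j)<\eps_n$; as $d_nx_n=-\sum_{j>n}d_jx_j$ this is a contradiction. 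The halving coming from the $K_{-1}$--bound $|d_j|\le 2^{j+1}$ (rather than the $\fP$--bound $2^{j+2}$) is exactly what makes the estimate close.

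For the GTG statement, $K_m+K_m\subseteq K_{m-1}$ and additivity of $\varphi_{\mathbf x}$ on $\fP$ give $\varphi_{\mathbf x}(K_1)+\varphi_{\mathbf x}(K_1)\subseteq\varphi_{\mathbf x}(K_0)=\wt X$. To deduce $\gamma(\wt X)=2$ through Proposition \ref{definicion_bis_gtg} I would identify $(1/2^m)\wt X=\varphi_{\mathbf x}(K_m)$: the inclusion $\varphi_{\mathbf x}(K_m)\subseteq(1/2^m)\wt X$ is immediate because $k\,\varphi_{\mathbf x}(K_m)=\varphi_{\mathbf x}(kK_m)\subseteq\wt X$ for $1\le k\le 2^m$, and the reverse inclusion is the closure analogue of the estimate (6) of Lemma \ref{LemmaUFSS}, proved by induction on $m$. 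Granting this, $(1/2)\wt X+(1/2)\wt X=\varphi_{\mathbf x}(K_1)+\varphi_{\mathbf x}(K_1)\subseteq\wt X$, so $\gamma(\wt X)\le 2$; and $\gamma(\wt X)\ge 2$ since $\wt X_\infty=\bigcap_m\varphi_{\mathbf x}(K_m)=\varphi_{\mathbf x}\bigl(\bigcap_m K_m\bigr)=\{0\}\ne\wt X$, so $\wt X$ is not a subgroup. Finally, $\wt X_\infty=\{0\}$ together with $\wt X$ being GTG makes ${\mathcal T}_{\wt X}$ a Hausdorff UFSS topology; it is non--discrete because $x_n\in\varphi_{\mathbf x}(K_N)\subseteq(1/2^N)\wt X$ for $n\ge N$, so $x_n\to 0$; and it is finer than the original topology because $(1/2^m)\wt X=\varphi_{\mathbf x}(K_m)\subseteq\varphi_{\mathbf x}(K_m\cap W_{m-1})\subseteq B_{\,v(x_{m-1})/2^{m+2}}$ by $K_m\subseteq W_{m-1}$ and Claim \ref{Claim2}, with $v(x_{m-1})\to 0$ (the argument of Lemma \ref{Lydia_Lemma}(a) applied to $\wt X$).

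The main obstacle I expect is precisely the reverse inclusion $(1/2^m)\wt X\subseteq\varphi_{\mathbf x}(K_m)$ in the degree computation, i.e.\ propagating near/almost independence to infinite sums sharply enough. The clean induction proving the closure analogue of (6) runs for $m\ge 2$, but its base case $m=1$ forces one to control relations $d\in K_{-2}$ (differences $2\alpha-\gamma$ with $\alpha,\gamma\in K_0$), and there the min--index estimate above only yields a bound of size $(8/7)\eps_n$ rather than $<\eps_n$; closing this gap, and thereby pinning $\gamma(\wt X)$ to exactly $2$ while keeping $\wt X_\infty=\{0\}$, is where the almost independence hypothesis has to be exploited most carefully.
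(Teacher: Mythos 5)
Your overall route is the same as the paper's: extend $\varphi_{\mathbf x}$ by completeness to the compact sets $K_m$ using the tail estimate of Claim \ref{Claim2}, prove continuity and injectivity on $K_0$ so that $\wt{X}=\varphi_{\mathbf x}(K_0)$ is a compact zero--dimensional homeomorphic copy of $K_0$, identify $(1/2^m)\wt{X}=\varphi_{\mathbf x}(K_m)$ by induction, and read off $\gamma(\wt{X})=2$, $\wt{X}_\infty=\{0\}$ and the comparison with the metric topology. Your injectivity argument (minimal nonzero index $n$, $v(d_nx_n)\ge\eps_n$ against the tail bound $\sum_{j>n}2^{j+1}v(x_j)<\eps_n$) is correct and is essentially the paper's Claim \ref{Claim3} restricted to differences of two elements of $K_0$.

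The genuine gap is the one you flag yourself: the first nontrivial step of the induction $(1/2^{m+1})\wt{X}\subseteq\varphi_{\mathbf x}(K_{m+1})$ requires you to deduce $2k_j=l_j$ from $\varphi_{\mathbf x}((2k_j))=\varphi_{\mathbf x}((l_j))$ with $(k_j),(l_j)\in K_0$, and the difference $(2k_j-l_j)$ need not lie in $K_{-1}$, so your kernel statement does not apply. However, your diagnosis that one is then stuck with a $K_{-2}$--type bound of $(8/7)\eps_n$ is too pessimistic: the right move is to use the \emph{coordinatewise} bound $|2k_j-l_j|\le 2\cdot 2^{j}+2^{j}=3\cdot 2^{j}$ rather than the crude membership $d\in K_{-2}$, which would only give $|d_j|\le 2^{j+2}$. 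With the constant $3$ the same minimal--index estimate closes (it yields roughly $\tfrac{6}{7}\eps_n<\eps_n$), and this is exactly the paper's Claim \ref{Claim3}: for $(k_j)\in K_{-1}$ and $(l_j)\in K_0$ distinct, isolate the $(m+1)$-st term at the minimal differing index $m$ and estimate $\eps_{m}\le 3\cdot 2^{m+1}v(x_{m+1})+\tfrac34 v(x_{m+1})<2^{m+3}v(x_{m+1})<\eps_m$ by (8), a contradiction; note also $|2k_m-l_m|\le 3\cdot 2^m<2^{m+2}$, so the definition of $\eps_m$ still applies at that index. So the gap in your write-up is real but is closed purely by sharpening the bookkeeping from ``$d\in K_{-2}$'' to ``$|d_j|\le 3\cdot 2^{j}$''; no further exploitation of almost independence is needed beyond what you already invoke.
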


\begin{proof} We intend to extend the map $  \varphi_{\mathbf x}$ defined in item (b) of Remark
\ref{remark:NIndep} to a map $  \varphi: \bigcup_{m\in \Z }K_m \longrightarrow G$ by setting
$ \varphi( (k_j)) =\sum_{j\ge 0}k_jx_j $ (the correctness of this definition is checked below).
Furthermore, we show that $\varphi\restriction_{K_m}$ is continuous for each $m$, while
$\varphi\restriction_{K_0}$ is injective. Since each $K_m$ is a compact zero-dimensional space
(Remark \ref{remark:NIndep} (c)), this will prove that $\wt{X}=\varphi(K_0)$ itself is a compact
zero-dimensional space, while the subspaces $\varphi(K_m)$ with $m< 0$ are just compact.

For a fixed $(k_j)_{j\ge 0}\in K_m$ let $y_ n = \sum_{j=0}^nk_jx_j \in G$. To see that $(y_n)$ is a
Cauchy sequence in $G$ apply
%for $j_0\leq k$ and  $x=y_k-y_{j_0}=\sum_{j=j_0+1}^kk_jx_j \in G$ one has $v(x) <  \frac{1}{2^{m+2}}v(x_{j_0})$ due to
  Claim \ref{Claim2} to get $v(y_k - y_n)\leq \frac{1}{2^{m+2}} v(x_{n})$ for every pair $n\leq k$. Since $x_n \to 0$, this proves that $(y_n)$ is a   Cauchy sequence in $G$.
Since $G$ is complete, the litmit $\lim y_n$ exists and $ \varphi( (k_j)) =\sum_{j\ge 0}k_jx_j$ and $\wt{X}:=\varphi(K_0) $  make sense.
%$$\wt{X}:=\varphi(K_0) = \left\{\sum_{j\ge 0}k_jx_j:\ \sum_{j\ge 0}\frac{|k_j|}{2^j}\le 1\right\}.$$

Since the norm function $v: G \to \R$ is continuous, we obtain from Claim \ref{Claim2}, after passing to the limit:

$$
\varphi(K_m \cap W_{n}) \subseteq B_{\frac{v(x_{n})}{2^{m+2}}}
% \;\;\;\; (i.e., \;\;   v\left(\sum_{j=j_0+1}^\infty k_jx_j\right) \leq  \frac{1}{2^{m+2}}v(x_{j_0}) \  \  \mbox{ for all } \  \  (k_j) \in K_m)
.\eqno(9)
$$
Even if $\varphi$ is not a homomorphism, one has
$$
\varphi(\xi+ \eta)= \varphi(\xi)+\varphi(\eta), \;\; \mbox{ whenever }\;  \; \xi=(k_j), \eta=(l_j) \in K_m,\eqno(10)
$$
where $\xi+ \eta= (k_j + l_j)\in K_{m-1}$.

Fix $m\in \Z$. In order to show that $\varphi\restriction_{K_m}$ is continous, fix $(k_j)\in K_m$ and $\eps>0$. There exists $n\in\N$ such that $\frac{1}{2^{m+1}} v(x_n)<\eps$.
For $(l_j)\in (k_j) + W_n$ we have $\varphi((l_j))\in
%\varphi((k_j)) + B_{\frac{v(x_{j_0})}{2^{m+2}}}(0)+ B_{\frac{v(x_{j_0})}{2^{m+2}}}(0)   \subseteq
\varphi((k_j)) + B_{\eps}(0)$
%v\left (\varphi((k_j)-\varphi((l_j))\right)\leq v\left(\sum_{j>j_0}k_j x_j\right)+ v\left(\sum_{j>j_0}l_jx_j\right)\leq  \frac{1}{2^{m+2}} v(x_{j_0})+ \frac{1}{2^{m+2}}v(x_{j_0}) =\frac{1}{2^{m+1}}v(x_{j_0})<\eps
%$$
by (9), which shows that $\varphi$ is continuous.

 In order to show that $\varphi$ is injective, we show the following stronger statement, that will  be necessary bellow:

 \begin{claim}\label{Claim3} If $(k_j)\in K_{-1}$ and $(l_j)\in K_0$ with  $(k_j)\not=(l_j)$, then
$\varphi((k_j))\ne \varphi((l_j))$.
\end{claim}

Assume for a contradiction that $\varphi((k_j))=\varphi((l_j))$ with $(k_j)\not=(l_j)$.
Fix $m$ minimal with $k_{m}\not=l_{m}$. Then
 $$
 (l_{m}-k_{m})x_{m}=\sum_{j>m}k_j x_j - \sum_{j>m}l_j x_j
 $$
 with $|k_{j}-l _{j} |  \leq 3\cdot 2^{j}<2^{j+2}$ for  all $j\geq m$ (as $k_{j} \leq 2^{j+1}$ and $l_{j} \leq 2^{j}$). Hence the definition of $\eps_{m}$ gives
$$
\eps_{m}\leq v((l_{m}-k_{m})x_{m})\le  |k_{m+1}-l _{m+1} |v(x_{m+1})+  v\left(\sum_{j>m+1}k_j x_j\right) + v\left(\sum_{j>m+1}l_j x_j\right).\eqno(11)
$$
To the second and the third term in the right hand side of (11) we may apply (9) with $m=0$, respectively $m=-1$ and
 $n=m+1$ and obtain
%$(l_j)\in K_{0} \cap W_{m+1}$. So from
%Claim \ref{Claim2}
% (more precisely, (11))
%we get
$$
v\left(\sum_{j>m+1}k_j x_j\right) + v\left(\sum_{j>m+1}l_j x_j\right)\leq
\frac{1}{2}v(x_{m+1}) + \frac{1}{4}v(x_{m+1})= \frac{3}{4}v(x_{m+1}).\eqno(12)
$$
Since $|k_{m+1}-l _{m+1} |  \leq 3\cdot 2^{m+1}$ yields
$
|k_{m+1}-l _{m+1} | v(x_{m+1})\leq 3\cdot 2^{m+1}v(x_{m+1}),  %\eqno(14)
$
by (11) and (12), we get $\eps_{m}\le 3\cdot 2^{m+1}v(x_{m+1}) + \frac{3}{4}v(x_{m+1})$.
Along with (8), applied with $n=m$, we get
$\eps_{m}\le 3\cdot 2^{m+1}v(x_{m+1}) + \frac{3}{4}v(x_{m+1})<2^{m+3}v(x_{m+1})<\eps_m$, a contradiction.
%$\eps_{m}\le \frac{3}{4}(1 +  \frac{1}{2^{m+3}} ) \eps_{m} < \eps_{m}$, a contradiction.
This proves Claim \ref{Claim3}.

\smallskip

From Claim \ref{Claim3} we conclude that $\varphi\restriction_{K_0}$ is a continuous bijective mapping. Since $K_0$ is compact, $\varphi\restriction_{K_0}: K_0 \to \wt X= \varphi(K_0)$ is a homeomorphism which implies in particular that $\wt X$ is compact and totally disconnected. Since $\varphi$ is an extension of $\varphi_{\mathbf x}$
and since $\fZ_0 \cap K_0$ is dense in $K_0$, we deduce that  $ X = \varphi_{\mathbf x}(\fZ_0  \cap K_0)=\varphi(\fZ_0  \cap  K_0)$ is dense in $\wt X= \varphi(K_0)$. Since the latter set is compact, it must be closed in $G$. Therefore,
$\wt X$ coincides with the closure of $X$.

Next we claim that
%$$
%(1/2)\wt X =\varphi(K_1).\eqno(16)
%$$
% Indeed,  fix $x=\sum_{j\ge 0}k_j x_j \in (1/2)\wt X$, $(k_j) \in K_0$. By   definition $2x=\sum_{j\ge 0}2k_j x_j\in X$. This means there exists a representation $2x=\sum_{j\ge 0}l_jx_j$ with  $(l_j) \in K$. Since obviously $(2k_j)\in K_{-1}$,  from $\varphi((l_j)))= \varphi((2k_j))$ we deduce that  $2k_j=l_j$ for every $j$. Moreover, $(l_j) \in K$ implies
% $(k_j) \in K_1$, so $x\in \varphi(K_1)$. Therefore, $(1/2)\wt X  \subseteq \varphi(K_1)$. Since the other inclusion is obvious, this proves (16).

%
%%(since $ \varphi(\xi+ \eta)= \varphi(\xi)+\varphi(\eta)$ for $\xi, \eta \in K_1$, as mentioned above).
%%$K\ni \varphi(\xi+ \eta)= \varphi(\xi)+\varphi(\eta)= x + y$. So
%Therefore, $\gamma(\wt X)\le 2$. Since $x_0\in X $ and $2 x_0\notin X$, we obtain
%$\gamma(\wt X)=2$.
%To this end it suffices to prove that
$$
(1/2^m) \wt X=\varphi(K_m)= \left\{\sum_{j=0}^\infty k_jx_j:\  k_j\in\Z,\
\sum_{j=0}^\infty\frac{|k_j|}{2^j}\le \frac{1 }{2^m}\right\},\eqno(13)
$$
as in the case of the set $X$ in Lemma \ref{LemmaUFSS}.

 The inclusion $\supseteq$ in (13) is obvious. We prove the following stronger version of the reverse inclusion by induction:
$$
\mbox{if }\; x=\sum_{j=0}^\infty k_jx_j, \mbox{ with} \;
(k_j) \in K_0,
%\sum_{j=0}^\infty \frac{|k_j|}{2^j}\le 1
\  \mbox{ then } \; \; x\in (1/2^m)\wt{X} \;\; \Longrightarrow \;\; (k_j) \in K_m
%\sum_{j=0}^\infty \frac{|k_j|}{2^j}\le \frac{1 }{2^m}
.\eqno(14)
$$
For $m=0$ the assertion is trivial. So suppose (14) holds true for $m$ and let $x=\varphi((k_j))$, with $(k_j)\in K_0$ belong to $(1/2^{m+1})\wt X$.
%\sum_{j=0}^\infty k_jx_j \in (1/2^{m+1})X$,  with $\sum_{j=0}^\infty\frac{|k_j|}{2^j}\le 1$.
Since $x,2x\in (1/2^m)\wt X$, by the induction hypothesis,
$(k_j)\in K_m$. Moreover, there exists a  representation $2x=\varphi((l_j))$ with $(l_j)\in K_m$.  Then $\varphi((2k_j))=\varphi((l_j))$. Since, $(2k_j) \in K_{-1}$ and $(l_j)\in K_0$, from Claim \ref{Claim3}
we conclude that  $2k_j=l_j$ for all $j$ and hence $\sum_{j=0}^\infty\frac{|k_j|}{2^j}\le \frac{1}{2^{m+1}}.$ This proves (14), and consequently also (13).

In particular, from (13) we get $(1/2)\wt X =\varphi(K_1)$. Since $K_1 + K_1 \subseteq K_0$ from Remark \ref{remark:NIndep}(b),
this combined with with (13), gives $(1/2)\wt X+(1/2)\wt X\subseteq \wt X$. Hence $\gamma(\wt{X})\leq 2$.

It remains to note that (14) implies also $\wt X_\infty=\{0\}$. Hence ${\cal T}_{\wt{X}}$ is a UFSS topology coarser than ${\cal T}_X$ (as $X\subseteq \wt{X}$), so
it is non-discrete. In particular, $\wt  X \ne \{0\} =\wt X_\infty$, so $\gamma(\wt{X})= 2$.
From (9) and (13) we conclude that ${\cal T}_{\wt{X}}$ is finer than the original topology of $G$.
\end{proof}

In order to characterize those  abelian metrizable groups which admit a (strictly) finer UFSS group topology, we need the following definition which will characterize these groups.

\begin{definition}
An abelian  topological group $G$ is called {\em locally bounded} if there exists some $n\in\N$ such that
the subgroup $G[n]=\{x\in G:\ nx=0\}$ is open.
\end{definition}

\begin{remark}\label{remlocbounded}
$G$ is locally bounded iff it has a neighbourhood $U$ in which all elements are of bounded order. Obviously, a   metric abelian  topological group $G$ is not locally bounded iff
there exists a null sequence $x_n \to 0$ such that $o(x_n)\to \infty$.

A locally compact abelian group $G$ is locally bounded iff it has an open compact subgroup of
finite exponent. Ideed, assume that $G$ is a locally compact, locally bounded abelian group. For suitable
$n \in\N$ the
subgroup $G[n]$ is open. By the structure theorem for locally compact abelian groups, $G[n]$
contains an  open subgroup $K$. It is clear
that $K$ is open in $G$ and of finite exponent. The converse implication is trivial.

\end{remark}

\begin{theorem}\label{Lydia_Theorem} Let $(G,d)$ be an abelian,  metrizable, non--discrete group. The following assertions are equivalent:
\begin{enumerate}
\item[(i)]  $G$ is not locally bounded;
\item[(ii)]  there exists a   finer non--discrete UFSS group topology ${\cal T}_X$  on $G$;
\item[(iii)] there exists a  strictly finer non--discrete UFSS group topology ${\cal T}_Y$  on $G$;
\item[(iv)] there exists an almost independent sequence in $G$.
  \end{enumerate}
%In case the metric $d$ is complete, the sets $X$, respectively $Y$ can be modified  to be compact in the metric topology.
\end{theorem}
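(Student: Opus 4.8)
The plan is to establish the cycle of implications (i) $\Rightarrow$ (iv) $\Rightarrow$ (iii) $\Rightarrow$ (ii) $\Rightarrow$ (i). Two of these are essentially free. For (iv) $\Rightarrow$ (iii), given an almost independent sequence $(x_n)$, Lemma \ref{Lydia_Lemma}(a) produces a non-discrete UFSS topology ${\cal T}_X$ finer than the original topology of $G$, and Lemma \ref{Lydia_Lemma}(b) produces, from the subsequence $(x_{2n})$, a UFSS topology ${\cal T}_Y$ with ${\cal T}_X < {\cal T}_Y$. Since ${\cal T}_Y > {\cal T}_X$ and ${\cal T}_X$ is finer than the original topology, ${\cal T}_Y$ is a strictly finer non-discrete UFSS topology, which is exactly (iii). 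The implication (iii) $\Rightarrow$ (ii) is immediate, a strictly finer UFSS topology being in particular a finer one.

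For (ii) $\Rightarrow$ (i) I would argue by contraposition. Suppose $G$ is locally bounded, so $G[n]$ is open for some $n\in\N$, and let ${\cal T}$ be any finer UFSS group topology on $G$. Then $G[n]$ is open in ${\cal T}$ as well, and by Proposition \ref{perm_prop_3}(b) the subgroup $(G[n],{\cal T}\restriction_{G[n]})$ is again UFSS, say with distinguished neighborhood $U$. Since $G[n]$ has exponent $n$, for every $x\in G[n]$ the multiples $jx$ with $1\le j\le n$ already exhaust $\{mx:m\in\N\}$, whence $(1/n)U=U_\infty=\bigcap_k (1/k)U=\{0\}$ by the Hausdorffness of the UFSS topology. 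Thus $\{0\}$ is open in $G[n]$, and as $G[n]$ is open in ${\cal T}$, the topology ${\cal T}$ is discrete. Hence no finer non-discrete UFSS topology can exist, which is the negation of (ii).

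The heart of the matter is (i) $\Rightarrow$ (iv): constructing an almost independent sequence when $G$ is not locally bounded. By Remark \ref{remlocbounded} non-local-boundedness furnishes a null sequence $(z_m)$ in $G$ with $o(z_m)\to\infty$. I would build the desired sequence $(y_n)$ recursively, choosing each $y_n$ among the $z_m$, so that at every stage $\eps_n>0$ (equivalently, no nontrivial relation $\sum_{j\le n}a_j y_j=0$ with $|a_j|\le 2^{j+2}$ holds) and the sharp inequality $2^{n+3}v(y_{n+1})<\eps_n$ of (8) is met; the complementary inequality $\eps_n\le v(y_n)$ is automatic from definition (7). To start, any $z_m$ with $o(z_m)>4$ serves as $y_0$. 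For the inductive step, having fixed $y_0,\dots,y_n$ with $\eps_n>0$, the ``bad set'' of points $y$ whose adjunction as $y_{n+1}$ would create a short relation is
$$
B \;=\; \bigcup \; \bigl\{\, y\in G : a_{n+1}\,y = -\sum_{j=0}^{n} a_j y_j \,\bigr\},
$$
the union ranging over the finitely many integer vectors $(a_0,\dots,a_{n+1})$ with $|a_j|\le 2^{j+2}$ for all $j$ and $a_{n+1}\ne 0$ (relations with $a_{n+1}=0$ are already excluded by $\eps_n>0$). It therefore suffices to find $y_{n+1}=z_m$ with $v(z_m)<\eps_n/2^{n+3}$ and $z_m\notin B$.

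The main obstacle is precisely showing that such a choice is available, i.e.\ that $B$ cannot swallow a whole tail of the null sequence, and this is where $o(z_m)\to\infty$ enters decisively. If infinitely many $z_m$ lay in $B$, then since $B$ is a \emph{finite} union of solution sets, infinitely many of them would lie in a single set $\{y:a\,y=c\}$ with $a\ne 0$ and $c$ fixed; continuity of multiplication by $a$ forces $a z_m\to 0$, while $a z_m=c$ is constant, so $c=0$ and $a z_m=0$, giving $o(z_m)\le|a|\le 2^{n+3}$ for infinitely many $m$ --- contradicting $o(z_m)\to\infty$. Hence only finitely many $z_m$ meet $B$, and, as $v(z_m)\to 0$, a valid $y_{n+1}$ exists. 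The resulting sequence satisfies (7) and (8), so it is almost independent, proving (iv) and closing the cycle.
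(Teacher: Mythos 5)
Your proposal is correct and follows the same cycle of implications as the paper; the arguments for (iv) $\Rightarrow$ (iii) $\Rightarrow$ (ii) and for (ii) $\Rightarrow$ (i) (killing $X_\infty=(1/n)U$ via Hausdorffness once one works inside the open bounded subgroup $G[n]$) are essentially the paper's. The one place where you genuinely diverge is the inductive step of (i) $\Rightarrow$ (iv). The paper proves a stronger local statement: \emph{any} element $x_{n+1}$ with $v(x_{n+1})<\eps_n/2^{n+3}$ and $o(x_{n+1})>2^{n+3}$ can be adjoined, because for a would-be relation $\sum_{j=0}^{n+1}a_jx_j=0$ with $(a_0,\dots,a_n)\neq 0$ the reverse triangle inequality gives $v\bigl(\sum_{j=0}^{n+1}a_jx_j\bigr)\ge v\bigl(\sum_{j=0}^{n}a_jx_j\bigr)-v(a_{n+1}x_{n+1})\ge \eps_n-2^{n+3}v(x_{n+1})>0$, while the case $a_0=\dots=a_n=0$ is excluded by the order condition. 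You instead show that the ``bad set'' $B$, a finite union of solution sets $\{y:ay=c\}$, can meet the null sequence $(z_m)$ only finitely often, via pigeonhole, continuity of $y\mapsto ay$, and $o(z_m)\to\infty$. Both arguments are valid and use the same two ingredients (small norm, large order); yours trades the paper's one-line metric estimate for a slightly longer existence argument, and in exchange does not need the seminorm inequality $v(kx)\le |k|\,v(x)$ explicitly. No gaps: the base case ($o(y_0)>4$ handles $|a_0|\le 4$), the automatic inequality $\eps_n\le v(y_n)$, and the propagation of strictness ${\cal T}_Y>{\cal T}_X\ge \tau$ are all handled correctly.
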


\begin{proof} (iii) $\Longrightarrow$ (ii) is trivial.

(ii) $\Longrightarrow$ (i): Let $G$ be  locally bounded, this means there exists $n\ge 1$ such that the subgroup $G[n]$ is open. Suppose there
exists a UFSS topology ${\cal T}_X$  on $G$ with distinguished neighborhood $X$ which is   finer than the
 topology $\tau$ induced by the metric. We may assume that
$X\subseteq G[n]$, because otherwise we can replace $X$ by $X\cap G[n]$.  Then
$(1/n)X=X_\infty$. Since we assumed ${\cal T}_X$ to be finer than the original topology and hence Hausdorff,
$\{0\}=X_\infty =(1/n)X$. This implies that ${\cal T}_X$ is discrete. So the only finer UFSS group topology on $G$ is the discrete one.

(iv) $\Longrightarrow$ (iii): this is covered by the preceding lemma.

(i) $\Longrightarrow$ (iv): Assume now that $G$ is not locally bounded. We have to show that there exists
an almost independent sequence $(x_n)$
 of elements in $G$.
   This will be done by induction. For $n=0$ condition (4) is equivalent to $3x_0\not=0\not=4x_0 $.
So fix an element $x_0\in G$ of order greater than $ 4$. Assume that $x_0,\ldots,x_n$ have already been chosen to satisfy (4). Define $
\eps_n$ by (7). Then choose $x_{n+1}$ with $v(x_{n+1})< \eps_n/2^{n+3} $  and
  $o(x_{n+1})>2^{n+3}$. To check that this works, let $x=\sum_{j=0}^{n+1}a_jx_j$ with $(a_j)\not=(0)$ and $|a_j|\le 2^{j+2}$ for $j=0,1,\ldots, n+1$. If $a_0=\ldots=a_n=0$ then $x\not=0$, since $|a_{n+1}|\le 2^{n+3}<{\rm o}(x_{n+1}).$ Otherwise, we have
$
v(x)\ge v\left(\sum_{j=0}^na_jx_j\right)-v(a_{n+1}x_{n+1})\ge \eps_n - 2^{n+3}v(x_{n+1})>0 ,
$
so $x\not=0$. By the choice of each $x_n$,  the sequence is almost independent.
\end{proof}

\begin{remark}
Let us note that for the set $Y$ constructed in the proof, the strictly finer non-discrete UFSS topology ${\cal T}_Y$ is still locally unbounded. So to the group
$(G, {\cal T}_Y)$ the same construction can be applied to provide an infinite  strictly increasing chain of non-discrete UFSS topologies $
{\cal T}_Y= {\cal T}_{Y_0} < {\cal T}_{Y_1} < \ldots < {\cal T}_{Y_n} < \ldots.$
Hence in the theorem one can also add a stronger property (v) claiming the existence of such a chain.
\end{remark}

\begin{corollary}\label{Navarra} Let $(G,d)$ be a complete abelian,  metrizable non locally bounded group.
Then there exists a compact totally disconnected GTG set $X$ of $G$, such that ${\cal T}_X$ is
 a    finer non-discrete UFSS group topology   on $G$.
\end{corollary}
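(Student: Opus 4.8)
The plan is to obtain the desired set as an immediate consequence of the two preceding theorems, since all the substantial work has already been carried out there. The key observation is that the hypotheses of Corollary \ref{Navarra} are tailored precisely so that Theorem \ref{Lydia_Theorem} yields an almost independent sequence, and Theorem \ref{Lydia1} then upgrades that sequence into a compact totally disconnected GTG set with the required UFSS topology.

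First I would check that a non locally bounded group is automatically non-discrete, so that Theorem \ref{Lydia_Theorem} is applicable. Indeed, were $G$ discrete, then $\{0\}=G[1]$ would be open, making $G$ locally bounded with $n=1$, contrary to our standing assumption. Hence $(G,d)$ is abelian, metrizable and non-discrete, and assertion (i) of Theorem \ref{Lydia_Theorem} holds.

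Next, invoking the implication (i) $\Longrightarrow$ (iv) of Theorem \ref{Lydia_Theorem}, I obtain an almost independent sequence $(x_n)$ in $G$. Since $(G,d)$ is complete by hypothesis, Theorem \ref{Lydia1} now applies to this sequence: the closure $\wt{X}$ of the GTG set associated to $(x_n)$ via Lemma \ref{LemmaUFSS} is compact and totally disconnected, is itself a GTG set with $\gamma(\wt{X})=2$, and the topology ${\cal T}_{\wt{X}}$ is a non-discrete UFSS group topology finer than the topology induced by $d$. Renaming $\wt{X}$ as $X$ gives exactly the compact totally disconnected GTG set demanded by the statement, which completes the argument.

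As for the main difficulty, there is essentially none at the level of this corollary: it is a bookkeeping combination of facts already established. The only points requiring attention are the small remark that non local boundedness entails non-discreteness (so that Theorem \ref{Lydia_Theorem} can be invoked) and the harmless renaming of $\wt{X}$ to match the notation of the statement. All the genuine content---the inductive construction of the almost independent sequence in Theorem \ref{Lydia_Theorem}, together with the computation $\gamma(\wt{X})=2$ and the compactness and total disconnectedness of $\wt{X}$ in Theorem \ref{Lydia1}---is already available and need not be repeated.
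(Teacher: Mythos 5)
Your proposal is correct and follows exactly the paper's route: the paper's own proof consists of the single observation that Theorem \ref{Lydia_Theorem} supplies an almost independent sequence, with Theorem \ref{Lydia1} then applied implicitly to produce the compact totally disconnected GTG set. Your additional remark that non local boundedness forces non-discreteness (so that Theorem \ref{Lydia_Theorem} applies) is a small but welcome piece of bookkeeping that the paper leaves tacit.
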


\begin{proof} According to the above theorem  $G$ admits an almost independent sequence $(x_n)$. \end{proof}

E. Hewitt \cite{H} observed that the group $\T$ and the group $\R$ have the property that the only stronger
locally compact group topologies are the discrete topologies. Since locally minimal topologies generalize the
locally compact group topologies, this suggests the following question:  {\em Do the groups $\T$ and $\R$
admit stronger non-discrete locally minimal topologies?} The next corollary answers this question in a strongly
positive way. Namely,
the class of all non-totally disconnected locally compact metrizable abelian groups
(in place of $\T$ and  $\R$ only) and for the smaller class of UFSS topologies (in place of locally minimal
topologies).

\begin{corollary}\label{Hewitt}
A locally compact abelian metrizable group $G$ has  a strict UFSS refinement iff $G$ contains
no open compact subgroup of finite exponent.

This happens for example, if $G$ is not totally disconnected.

\end{corollary}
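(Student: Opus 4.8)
The plan is to read off the equivalence as an immediate combination of Theorem \ref{Lydia_Theorem} with Remark \ref{remlocbounded}, interpreting ``strict UFSS refinement'' as a strictly finer non--discrete UFSS group topology (condition (iii) of Theorem \ref{Lydia_Theorem}); the sufficient condition will then follow from a short connectedness argument.

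First I would dispose of the discrete case: if $G$ is discrete there is no strictly finer topology at all, while $\{0\}$ is an open compact subgroup of finite exponent, so both sides of the asserted equivalence fail and there is nothing to prove. Assume henceforth that $G$ is non--discrete. Then $G$ is abelian, metrizable and non--discrete, so Theorem \ref{Lydia_Theorem} applies and yields that $G$ admits a strict UFSS refinement (its condition (iii)) if and only if $G$ is not locally bounded (its condition (i)). On the other hand, the last part of Remark \ref{remlocbounded} states that a locally compact abelian group is locally bounded precisely when it has an open compact subgroup of finite exponent. Negating and combining the two equivalences gives exactly the claim: $G$ has a strict UFSS refinement iff $G$ contains no open compact subgroup of finite exponent.

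For the final assertion I would argue by contraposition, showing that if $G$ possesses an open compact subgroup $K$ of finite exponent $n$, then $G$ is totally disconnected. The crux is that such a $K$ is itself totally disconnected: its identity component $c(K)$ is a compact connected abelian group, hence divisible \cite[24.25]{HR}, while $nK=\{0\}$ forces $n\,c(K)=\{0\}$; divisibility then gives $c(K)=n\,c(K)=\{0\}$, so $K$ is totally disconnected.

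It remains to transfer this to $G$. Being open, $K$ is also closed, hence clopen; since the identity component $c(G)$ is connected and contains $0\in K$, it must be contained in $K$, and as a connected subset of $K$ through $0$ it lies in $c(K)=\{0\}$. Thus $c(G)=\{0\}$ and $G$ is totally disconnected, which is the contrapositive of the desired implication. I do not expect any genuine obstacle here: the whole argument is a matter of assembling the right earlier results, and the only place demanding a moment's care is matching the paper's definition of local boundedness ($G[n]$ open) with its locally compact reformulation, which is precisely what Remark \ref{remlocbounded} supplies.
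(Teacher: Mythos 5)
Your proposal is correct and follows essentially the same route as the paper: dispose of the discrete case, combine Theorem \ref{Lydia_Theorem} (iii)$\Leftrightarrow$(i) with the locally compact part of Remark \ref{remlocbounded}, and prove the final assertion by showing that an open compact subgroup of finite exponent forces total disconnectedness via divisibility of compact connected abelian groups. Your only (harmless) deviation is routing the connectedness argument through $c(K)$ before passing to $c(G)$, where the paper applies boundedness and divisibility directly to the connected component of $G$ inside $K$.
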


\begin{proof} The first assertion is obvious when $G$ is discrete, so we assume that $G$ is non-discrete in the sequel.

  According to \ref{Lydia_Theorem}, $G$ has a strict, non-discrete UFSS refinement iff $G$ is
locally bounded, which, by \ref{remlocbounded}, is equivalent to the existence of a compact open subgroup
of finite exponent.

In order to prove the second statement it is sufficient to show that every group $H$ which has an open compact
subroup $K$ of finite exponent is totally disconnected. The  connected component $C$   of $H$   is
contained in $K$ and hence bounded. On the other hand side, as every compact abelian connected group, $C$
is divisible. This
implies that $C$ is trivial and hence $H$ is totally disconnected.

%Assume that $G$ contains an open subgroup $K$ of finite exponent. Then $G$ is obviously locally bounded,
%so  $G$ has no strict UFSS refinement according to the previous theorem.
%Conversely, if $G$ has no strict UFSS refinement, then the previous theorem implies that $G$ is locally bounded.
%Let $G[n]$ be a bounded open subgroup of $G$. Then $G[n]$ is totally disconnected locally compact;
%indeed, the connected compontent of the bounded locally compact group $G[n]$ is divisible (as the connected component
%of every
%
% hence $G[n]$ contains an open
%compact subgroup $K$ of finite exponent.\NL\footnote{I suggest the following reformulation of the last sentence:
%Applying the structure theorem for locally compact groups, we obtain that $G$ (and hence also $G[n]$ has an open compact
%subgroup which is of exponent $\le n$.}
\end{proof}

%\begin{corollary}\label{Hewitt}
%Every non-totally disconnected locally compact abelian metrizable group has  a strict UFSS refinement.
%\end{corollary}
%
%\begin{proof} Let $G$ be a non-totally disconnected locally compact abelian metrizable group.
%Clearly, $G$ does not contain any open compact subgroup $N$ of finite exponent, since such a group
%$N$ would be zero-dimensional. Indeed, assume $nN= 0$ for some $n \in \N$.
%Since the connected component $N_0$ of $N$ must be divisible
%(as every compact connected group is divisible), we conclude that  $N_0 = nN_0 = 0$ is trivial.
%  Since totally disconnected compact groups are  zero-dimensional, we conclude that $N$ is  zero-dimensional.
%As $N$ is an open subgroup of $G$, this yields that $G$ is zero-dimensional too, a contradiction.
%Now the above corollary applies.
%\end{proof}

Now comes the topology-free version of Theorem \ref{Lydia_Theorem}:

\begin{corollary}\label{LastCorollary} For an abelian group $G$ TFAE:
\begin{enumerate}
\item[(i)]  $G$ is not bounded;
\item[(ii)]  $G$ admits a non--discrete UFSS group topology;
\item[(iii)] there exists a nearly independent sequence in $G$.
\end{enumerate}
\end{corollary}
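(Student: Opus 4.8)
The plan is to prove the cycle (iii) $\Rightarrow$ (ii) $\Rightarrow$ (i) $\Rightarrow$ (iii). This is the topology-free counterpart of Theorem \ref{Lydia_Theorem}, so I would keep its inductive skeleton while discarding all metric data. The implication (iii) $\Rightarrow$ (ii) needs no extra work: given a nearly independent sequence ${\mathbf x}=(x_n)$, Lemma \ref{LemmaUFSS} produces the GTG set $X=\varphi_{\mathbf x}(K_0\cap \fZ_0)$ whose topology ${\cal T}_X$ is a non-discrete UFSS topology on $G$.

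For (ii) $\Rightarrow$ (i) I would argue by contraposition. Suppose $G$ is bounded, say $G=G[n]$ for some $n\in\N$, and let $X$ be a (symmetric) distinguished neighborhood of a UFSS topology $\tau$ on $G$. Since $nx=0$ for every $x$, the multiples $kx$ run through a finite periodic set and one checks at once that $(1/n)X=X_\infty$. As $\tau$ is Hausdorff, $X_\infty=\bigcap_k(1/k)X=\{0\}$, so $(1/n)X=\{0\}$ is a $\tau$-neighborhood of $0$ and $\tau$ is discrete. Hence a bounded group admits no non-discrete UFSS topology.

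The substantial step is (i) $\Rightarrow$ (iii), where I would build a nearly independent sequence by induction, replacing the metric separation via the quantities $\eps_n$ of Theorem \ref{Lydia_Theorem} with a coset-avoidance argument. Assume $x_0,\dots,x_n$ satisfy (4), set $M=2^{n+3}$, and let $F$ be the (finite, symmetric, $0$-containing) set of all $\sum_{j=0}^n a_jx_j$ with $|a_j|\le 2^{j+2}$. Condition (4) can fail for $x_0,\dots,x_{n+1}$ only if $a\,x_{n+1}=c$ for some $1\le|a|\le M$ and some $c\in F$; let $B$ be the set of such bad candidates. For fixed $a,c$ the solutions of $a\,x_{n+1}=c$ form a coset of $G[a]$, so $B$ is a finite union of cosets of the subgroups $G[a]$ with $1\le a\le M$. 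Here unboundedness enters decisively: for every $m\in\N$ one has $G/G[m]\cong mG$, and if $G[m]$ had finite index then $mG$ would be finite, so a finite exponent $e$ of $mG$ would give $(em)G=0$ and $G$ would be bounded; thus every $G[m]$ has infinite index. Taking $N=\lcm(1,\dots,M)$, each $G[a]\subseteq G[N]$, so $B$ lies in finitely many cosets of the infinite-index subgroup $G[N]$ and therefore $B\ne G$. Any $x_{n+1}\in G\setminus B$ extends the sequence (and automatically has $o(x_{n+1})>M$, since $0\in F$ rules out the combinations involving $x_{n+1}$ alone); the base case is the same argument with $F=\{0\}$ and $M=4$, yielding an $x_0$ of order $>4$.

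The main obstacle is exactly this avoidance step. Without a seminorm one cannot detect non-vanishing of a combination by estimating its size, as is done with the $\eps_n$ in Theorem \ref{Lydia_Theorem}, so the heart of the matter is recognizing that unboundedness is precisely the hypothesis making all the subgroups $G[m]$ of infinite index; once this is in hand, the elementary observation that a subgroup of infinite index cannot be covered by finitely many of its cosets closes the induction.
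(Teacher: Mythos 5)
Your proposal is correct, and the implications (iii)~$\Rightarrow$~(ii) and (ii)~$\Rightarrow$~(i) coincide with the paper's (the latter up to contraposition: both rest on the observation that $nG=0$ forces $(1/n)X=X_\infty=\{0\}$, so a Hausdorff UFSS topology on a bounded group is discrete). Where you genuinely diverge is in (i)~$\Rightarrow$~(iii). The paper proves this by a topological reduction: it picks a countable unbounded subgroup $H\leq G$, embeds it into $\T^{\N}$ to obtain a precompact metric on $H$, checks that no $H[n]$ is open (hence $(H,d)$ is not locally bounded), and then invokes Theorem \ref{Lydia_Theorem} to extract an \emph{almost} independent sequence, which is in particular nearly independent. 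You instead run the induction of Theorem \ref{Lydia_Theorem} purely algebraically, replacing the metric separation via the quantities $\eps_n$ by a coset-avoidance argument: the ``bad'' candidates for $x_{n+1}$ lie in finitely many cosets of the subgroups $G[a]$, $1\le a\le 2^{n+3}$, all contained in cosets of $G[N]$ with $N=\lcm(1,\dots,2^{n+3})$, and unboundedness of $G$ is exactly what makes $G[N]\cong\ker(x\mapsto Nx)$ of infinite index, so finitely many of its cosets cannot exhaust $G$. This argument is sound (including the remark that the order condition $o(x_{n+1})>2^{n+3}$ is subsumed by taking $c=0\in F$, and the base case with $F=\{0\}$, $M=4$). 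What each route buys: yours is self-contained and topology-free, making transparent that the existence of a nearly independent sequence is a purely algebraic consequence of unboundedness; the paper's route is shorter on the page because it recycles the already-established metric machinery, and it incidentally yields the stronger conclusion that the sequence can be taken almost independent with respect to a suitable precompact metric, which is more than the corollary requires.
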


\begin{proof}
The implication (iii) $\Longrightarrow$ (ii) was proved in Lemma \ref{LemmaUFSS}.

To prove the implication (ii) $\Longrightarrow$ (i) assume $G$ admits a non--discrete UFSS group topology
${\cal T}$ with distinguished neighborhood $U$ of 0. Then
for every $n\in \N$ the set $(1/n)U$ is a ${\cal T}$-neighborhood of 0, hence $(1/n)U\ne \{0\}$.
If $nG$ were $\{0\}$
for some $n\in \N$, then $(1/n)U=U_\infty=\{0\}$ which is a contradiction. So $G$ is unbounded.

%Thus $nU \ne 0$, and consequently, $nG \ne 0$ as well.
%This proves that $G$ is not bounded.

To prove the implication (i) $\Longrightarrow$ (iii) pick a countable subgroup $H$ of $G$ that is still not bounded. Since $H$ is countable,
there exists an injective homomorphism $j: H \to \T^\N$. Denote by $d$ the metric induced on $H$ by this embedding. Then $(H,d)$ is a metric precompact group, hence
it is not discrete. Moreover, for no $n\in \N$ the subgroup $H[n]$ is open. Indeed, if $H[n]$ were open, then by the precompactness of $H$ it has finite index in $H$. Hence
$mH \subseteq H[n]$ for some $m\in \N$. Therefore, $mn H = 0$, a contradiction. This argument proves that no subgroup $H[n]$ ($n\in \N$) is open in $H$. Hence, $(H,d)$ is
not locally bounded. Then $H$ contains an almost independent sequence $(x_n)$ by the above theorem. Clearly, this is also a  nearly independent sequence in $H$, and consequently, also in $G$.
\end{proof}

%%%%%%%%%%%%%%%%%%%%%%%%%%%%%%%

\section{Locally GTG groups}\label{newGTG}

\subsection{Locally GTG groups and their properties}

\begin{definition}[V. Tarieladze, oral communication]  \label{defGTG}  We say that a Hausdorff
topological abelian group $G$ is {\em locally GTG} if it admits a basis of
neighborhoods of the identity formed by GTG subsets of $G$.
\end{definition}

\begin{example}\label{Exampl0TG}
 \begin{itemize}
 \item[(a)] Every UFSS group is locally GTG. In particular $\R$ and
 $\T$ are locally GTG.
 \item[(b)] Every locally convex space is locally GTG.
\item[(c)] Assume that $G$ is a bounded abelian group with exponent $m$. If $U$
is a GTG neighborhood of $0$ in some group topology $\tau$ of $G$, then
$U_\infty=(1/m)U$ is a $\tau$-neighborhood  of $0$. Therefore,
\begin{itemize}
  \item[(c$_1$)] $(G,\tau)$ is locally GTG precisely when $(G,\tau)$ is
  linearly topologized.
  \item[(c$_2$)] $(G,\tau)$ is UFSS precisely when $(G,\tau)$ is discrete.
\end{itemize}
\end{itemize}
\end{example}

\begin{example}\label{pseudoconvex}
 A topological vector space is said to be
{\em locally pseudoconvex} if it has a basis of pseudoconvex neighborhoods of zero. A topological vector space is locally GTG as a
topological abelian group if and only if it is locally pseudoconvex.
\end{example}

\begin{proof}
Applying \ref{pcGTG}, it suffices to show that a topological
vector space which is locally GTG has a neighborhood basis
consisting of balanced GTG sets. So fix a GTG neighborhood $A$ and
define $B:=\{a\in A:\ [0,1]a\subseteq A\}$. It is straightforward
to prove that $[0,1]B\subseteq B$ and it is a well known fact that
$B$ is a neighborhood of zero. Let us prove that $B$ is GTG. Since $A$ was assumed to be GTG, there exists $n\in \N$ such that
$(1/n)A+(1/n)A\subseteq A$. Observe that $(1/n)B=\frac{1}{n} B$. We shall
show that $\frac{1}{n} B+\frac{1}{n}B\subseteq B$. So fix $a,b\in\frac{1}{n} B$ and $t\in
[0,1]$. Let us see that $t(a+b)$ belongs to $A$: $t(a+b)=ta+tb \in \frac{1}{n} B+\frac{1}{n}B\subseteq (1/n)A+(1/n)A\subseteq A$.
\end{proof}

\begin{example} \label{elecero}
Local GTGness may seem to be a too mild
property, but there exist natural examples of abelian topological groups
lacking
it. Consider the topological vector space $G=L^{0}$ of all classes of
 Lebesgue measurable functions $f$ on $[0,1]$ (modulo almost everywhere equality) with the topology of convergence in
measure. This topology can be defined by the invariant metric
$$
d(f,g)=\int_0^1\min\{1,|f(t)-g(t)| \}dt$$ (for details see for instance
\cite[Ch. 2]{Kal}). It is known that $L^0$ is not locally pseudoconvex and hence, by Example \ref{pseudoconvex}, it is not locally GTG as a topological group.
\end{example}
Here we collect several properties of locally GTG groups.

\begin{proposition}\label{permGTG}
\begin{itemize}
\item[(a)] Every subgroup of a locally GTG group is locally GTG.
\item[(b)] A group with an open locally GTG subgroup is locally GTG.
\item[(c)] The product of locally GTG groups is locally GTG.
\item[(d)] Quotient groups of locally GTG groups need not be locally GTG.
\item[(e)] Every group locally isomorphic to a locally GTG group is locally GTG. In particular, if  a topological group $G$ admits a  non-trivial locally GTG open subgroup, then $G$ is locally GTG.
\end{itemize}
%%%%%%%%%%%%%%%%%%%%%%%%%%%%

\end{proposition}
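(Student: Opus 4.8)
The plan is to dispose of the four \emph{positive} statements (a), (b), (c), (e) by transporting individual GTG neighbourhoods along the relevant maps, using the algebraic characterisations already available, and to settle the \emph{negative} statement (d) by producing a single counterexample.

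For (a), let $H\le G$ with $G$ locally GTG and let $\iota\colon H\hookrightarrow G$ be the inclusion. If $\{U_i\}$ is a basis of GTG neighbourhoods of $0$ in $G$, then $\{U_i\cap H\}=\{\iota^{-1}(U_i)\}$ is a neighbourhood basis of $0$ in $H$, and Lemma \ref{inv_im_GTG}(a) gives $\gamma(\iota^{-1}(U_i))\le\gamma(U_i)<\infty$, so each $U_i\cap H$ is GTG in $H$; hence $H$ is locally GTG. For (b), let $H$ be an open locally GTG subgroup of $G$. Because $H$ is open, any neighbourhood basis of $0$ in $H$ is also one in $G$, so it suffices to check that a GTG set $U\subseteq H$ of the group $H$ remains GTG in the group $G$. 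This is immediate from the observation that the operation $(1/m)U$ is insensitive to the ambient group once $U$ is small: since $x\in(1/m)U$ forces $x\in U\subseteq H$, one gets $(1/m)U$ computed in $G$ equal to $(1/m)U$ computed in $H$ for every $m$, so the criterion $(*)$ of Proposition \ref{definicion_bis_gtg} holds in $G$ iff it holds in $H$.

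For (c), write $G=\prod_{i\in I}G_i$ with each $G_i$ locally GTG. A basic neighbourhood of $0$ has the form $\prod_{i\in I}U_i$, where $U_i$ is a GTG neighbourhood of $0$ in $G_i$ for $i$ in a finite set $F$ and $U_i=G_i$ otherwise. Since every group is a GTG subset of itself (with $\gamma(G_i)=1$), all factors are GTG and the set $\{\gamma(U_i):i\in I\}=\{\gamma(U_i):i\in F\}\cup\{1\}$ is finite, hence bounded; by Lemma \ref{inv_im_GTG}(c) the product $\prod_{i\in I}U_i$ is GTG in $G$. These sets form a basis, so $G$ is locally GTG. For (e), suppose $f\colon V\to V'$ is a local isomorphism between $G$ and a locally GTG group $G'$, so that $f$ is additive whenever its arguments and their sum lie in $V$. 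The key point is, once more, that GTG-ness of a small set is a \emph{local} matter: if $x\in(1/m)U$ then $x,2x,\dots,mx\in U$, so as long as $U\subseteq V$ all the multiples needed to test membership stay inside $V$, where $f$ is a homomorphism. Concretely, given a GTG neighbourhood $U'\subseteq V'$ of $0$ with $\gamma(U')=m$, I would set $U:=f^{-1}(U')$, small enough that $U+U\subseteq V$; an easy induction on $k\le m$ shows $f(ka)=kf(a)$ for $a\in(1/m)U$, whence $f$ maps $(1/m)U$ into $(1/m)U'$, and then $(1/m)U'+(1/m)U'\subseteq U'$ transports back to $(1/m)U+(1/m)U\subseteq U$, so $U$ is GTG in $G$ by Proposition \ref{definicion_bis_gtg}. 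As the sets $f^{-1}(U')$ form a neighbourhood basis of $0$ in $G$, the group $G$ is locally GTG. The ``in particular'' clause is the special case in which $H$ is an open subgroup: the inclusion is then a local isomorphism, so (e) applies (and recovers (b)).

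The hard part will be (d), which asks for a \emph{counterexample}. The plan is to exhibit one locally GTG group possessing a Hausdorff quotient that fails to be locally GTG; the natural target for the quotient is the group $L^{0}$ of measurable functions under convergence in measure, which is not locally GTG by Example \ref{elecero}. The genuine obstacle is the presentation itself: a quotient of a locally convex space \emph{by a subspace} stays locally convex, and a continuous additive map between topological vector spaces is automatically linear, so $L^{0}$ cannot be reached by a linear quotient of a locally convex group. One must therefore realise $L^{0}$ (or some other non-locally-GTG group) as a quotient of a locally GTG group \emph{by a non-subspace closed subgroup}, via a group isomorphism that is not linear; producing and verifying such a presentation is the delicate and essential content of (d), and is where the real work of the proposition lies.
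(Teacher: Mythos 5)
Your arguments for (a), (b), (c) and (e) are correct and follow essentially the same route as the paper: (a) and (e) via Lemma \ref{inv_im_GTG}(a), (c) via Lemma \ref{inv_im_GTG}(c) together with the observation that a whole group has GTG-degree $1$, and (b) via the observation that a neighbourhood basis at $0$ in an open subgroup is one in the whole group (your extra check that $(1/m)U$ is insensitive to the ambient group is a worthwhile detail the paper leaves implicit).

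The genuine gap is (d): you correctly identify $L^{0}$ as the natural non-locally-GTG target and correctly diagnose why it cannot arise as a \emph{linear} quotient of a locally convex space, but you then stop, declaring the construction of an actual presentation to be "the real work" without doing it. As it stands, (d) is not proved. The paper closes exactly this gap with a soft but complete argument that you should be able to reconstruct: take \emph{any} Hausdorff abelian group $G$ that is not locally GTG (e.g.\ $L^{0}$ from Example \ref{elecero}). By Markov's theorem \cite{Markov}, $G$ is a (Hausdorff) quotient of the free abelian topological group $A(G)$. By the Tkachenko--Uspenskij theorem \cite{Tka,Us}, $A(G)$ embeds as a topological subgroup into the free locally convex space $L(G)$, which is locally GTG by Example \ref{Exampl0TG}(b); hence $A(G)$ is locally GTG by part (a). Thus $A(G)$ is a locally GTG group with a quotient, namely $G$, that is not locally GTG. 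Note that this sidesteps your obstruction entirely: the quotient map $A(G)\to G$ is a group quotient, not a linear one, and the locally GTG group used is $A(G)$, not a locally convex space. Without some such input (Markov plus Tkachenko--Uspenskij, or an explicit ad hoc presentation of $L^{0}$), your proof of (d) is incomplete.
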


\begin{proof}
(a) is a consequence of Lemma \ref{inv_im_GTG}(a) and Example
\ref{ejemplosgtg}(b). (b) follows from the fact that any basis of neighborhoods of zero in the open subgroup is a basis of neighborhoods of zero in the whole group. (c) is a consequence of \ref{inv_im_GTG}(c).
(d) Let $G$ be a Hausdorff group which is not locally GTG. $G$ is a
 quotient of the free abelian topological group $A(G)$ (\cite{Markov}).
  The free locally convex space $L(G)$ is locally GTG according to
  \ref{Exampl0TG}(b). According to  a result of Uspenskij and Tkachenko
   (\cite{Us} and \cite{Tka}) the free abelian topological group $A(G)$
    is a subgroup of $L(G)$ and hence, due to (a), also locally GTG.
     This proves (d). (e) is straightforward using  Lemma \ref{inv_im_GTG}(a).
\end{proof}

  Now we obtain another large class of examples:

\begin{example}\label{prec>Loc-GTG}
\begin{itemize}
\item[(a)]  Every precompact abelian group is locally GTG.
Indeed, every precompact abelian group is (isomorphic to) a subgroup
of a  power of $\T$, so items (a) and (c) of Proposition
\ref{permGTG} and item (a) of Example \ref{Exampl0TG} apply.
\item[(b)]  Every locally compact abelian group is locally GTG. Indeed,
every locally compact abelian group has the form $G = \R^n \times
G_0$, where $n\in \N$ and $G_0$ contains an open compact subgroup.
Then $G_0$ is locally GTG by item (a) and item (e) of  Proposition \ref{permGTG}, while $\R^n$ is UFSS, so
locally GTG. Now item (c) of Proposition \ref{permGTG} applies.
\end{itemize}
\end{example}

The connection between locally GTG and UFSS groups is the following:

\begin{theorem}\label{Prop2GTG}\begin{itemize}
\item[(a)] If $U$ is a GTG subset of an abelian group $G$, the quotient group
$G_U:=(G,{\mathcal T_U})/ U_{\infty}$  is UFSS when equipped with the quotient
topology of ${\mathcal T}_U$.
\item[(b)] Every locally GTG group $G$ can be embedded into a product of UFSS
groups.
\item[(c)] A group topology $\tau$ on an abelian group $G$ is a supremum of
UFSS topologies on $G$ iff $\tau$ is NSS and locally GTG.
\item[(d)] If a group topology $\tau$ on an abelian group $G$ is a supremum
of  a family ${\mathfrak T}=\{\tau_i:i\in I\}$ of UFSS topologies on $G$,
then $\tau$ is UFSS iff $\tau$ coincides with the supremum of a finite
subfamily of ${\mathfrak T}$.
\end{itemize}
\end{theorem}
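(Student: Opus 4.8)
The plan is to establish (a) first, since it is both the technical heart of the theorem and the engine for (b)--(d), which then follow by combining (a) with the permanence properties and the remark (after Proposition~\ref{definicion_bis_gtg}) that a GTG set $W$ with $W_\infty=\{0\}$ generates a UFSS topology. For (a) the obvious candidate for a distinguished neighborhood of $G_U=(G,{\cal T}_U)/U_\infty$ is $q(U)$, where $q$ is the canonical projection. \emph{The main obstacle is that $(1/n)q(U)$ need not equal $q((1/n)U)$:} this equality would hold if $U$ were $U_\infty$-saturated, i.e. $U+U_\infty=U$, but a GTG set need not be saturated (for instance $U=(\Z\times\{0\})\cup\{(0,\pm1)\}$ is a GTG subset of $\Z^2$ with $\gamma(U)=2$ and $U_\infty=\Z\times\{0\}$, yet $(0,1)+(1,0)\notin U$). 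To bypass this I would replace $U$ by the saturated set $V:=\overline{(1/m)U}^{\,{\cal T}_U}$, where $m=\gamma(U)$.

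The key steps for (a) are then: (i) $(1/m)U\subseteq V\subseteq U$, because $\overline{(1/m)U}\subseteq (1/m)U+(1/m)U\subseteq U$; (ii) $V$ is GTG with $\gamma(V)\le m^2$, since $(1/m^2)V+(1/m^2)V\subseteq (1/m^2)U+(1/m^2)U\subseteq (1/m)U\subseteq V$, using the inclusion $(1/mn)U+(1/mn)U\subseteq (1/n)U$ from the proof of Proposition~\ref{definicion_bis_gtg}; (iii) ${\cal T}_V={\cal T}_U$ and $V_\infty=U_\infty$, which follow from the sandwich in (i) together with ${\cal T}_U={\cal T}_{(1/m)U}$ (Remark~\ref{kgtg}(c)); (iv) $V$ is $U_\infty$-saturated, because it is ${\cal T}_U$-closed and every closed subset of a topological group is $\overline{\{0\}}$-saturated. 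Once $V$ is saturated one checks $q((1/n)V)=(1/n)q(V)$ (both have $q$-preimage $(1/n)V$), so $q(V)$ is a distinguished neighborhood of the Hausdorff quotient topology, whence $G_U$ is UFSS.

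For (b) I would fix a basis $\{U_i\}_{i\in I}$ of GTG neighborhoods of $0$. Each ${\cal T}_{U_i}\le\tau$ and, by (a), each $G_{U_i}$ is UFSS; the projections assemble into $\Delta=(q_i)_{i}\colon G\to\prod_{i}G_{U_i}$. Then $\Delta$ is injective because $\bigcap_i (U_i)_\infty\subseteq\bigcap_i U_i=\{0\}$, and it is a topological embedding because the initial topology it induces equals $\sup_i{\cal T}_{U_i}$ (here one uses that every open set of a group topology is $\overline{\{0\}}$-saturated, so the pullback of the quotient topology along $q_i$ is exactly ${\cal T}_{U_i}$), and $\sup_i{\cal T}_{U_i}=\tau$ since the $U_i$ form a basis. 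For (c), in the direction $\Leftarrow$, given $\tau$ NSS with witness $W$ and locally GTG, the GTG neighborhoods contained in $W$ still form a basis (finite intersections of GTG sets are GTG, Lemma~\ref{inv_im_GTG}(b)); each such $U$ satisfies $U_\infty=\{0\}$, so ${\cal T}_U$ is UFSS, and $\sup_U{\cal T}_U=\tau$. In the direction $\Rightarrow$, a supremum $\tau=\sup_i\tau_i$ of UFSS topologies is NSS (NSS passes to finer topologies, Lemma~\ref{lem:NSS}(a)) and locally GTG, since its basic neighborhoods are finite intersections of the $(1/n)U_i$, which are GTG (Example~\ref{ejemplosgtg}(a), Remark~\ref{kgtg}) and hence have GTG intersection.

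Finally, for (d): the implication $\Leftarrow$ is that a finite supremum of UFSS topologies is UFSS (Remark~\ref{RemUFSS0}(a)). For $\Rightarrow$, suppose $\tau=\sup_i\tau_i$ is UFSS with distinguished neighborhood $U$; then $U$ contains a basic neighborhood $W=\bigcap_{j=1}^{k}V_{i_j}$ of the supremum, where each $V_{i_j}$ is a GTG neighborhood for $\tau_{i_j}$. Put $\sigma=\sup_{j\le k}\tau_{i_j}\le\tau$. Since $(1/n)U\supseteq (1/n)W=\bigcap_{j=1}^{k}(1/n)V_{i_j}$ is a $\sigma$-neighborhood, the whole UFSS basis $\{(1/n)U\}$ consists of $\sigma$-neighborhoods, giving $\tau\le\sigma$ and hence $\tau=\sigma$. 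The only genuinely delicate point throughout is the non-saturation of GTG sets in part (a); everything else is bookkeeping with the operation $(1/n)(\cdot)$ and the already-proved permanence properties.
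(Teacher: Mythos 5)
Your proof is correct and follows essentially the same route as the paper's: parts (b)--(d) are argued almost exactly as in the paper, and part (a) rests on the same key point, namely that the $U_\infty$-saturation of $(1/m)U$ (with $m=\gamma(U)$) is still contained in $U$. The only divergence is in how (a) is executed: where you pass to the ${\cal T}_U$-closure $V=\overline{(1/m)U}$ to get a saturated distinguished neighborhood (paying with the estimate $\gamma(V)\le m^2$), the paper works directly with $\varphi_U((1/m)U)$ and verifies $(1/n)\varphi_U((1/m)U)\subseteq \varphi_U((1/n)U)$ from $(1/m)U+U_\infty\subseteq U$, which achieves the same thing a bit more economically.
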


\begin{proof} (a)  Let $U$ be a GTG subset of $G$. Since $U_{\infty}$ is the
${\mathcal T}_U$-closure of $\{0\}$, we can consider  the Hausdorff
quotient group $G_U=(G,{\mathcal T_U})/ U_{\infty},$ and  the canonical
epimorphism $\varphi_U:G\to G_U$.
\par
 Let $m\in \N$ be such that $(1/m)U +(1/m)U \subseteq U.$  Let us show
 that for every $n\in \N$
$$
(1/n)\varphi_U((1/m)U ) \subseteq \varphi_U((1/n)U ).
$$
Indeed, fix an element $\varphi_U(x)\in (1/n)\varphi_U((1/m)U ) .$ Then
for every $k\in \{1,\dots, n\}$, $kx\in
(1/m)U+U_{\infty}\subseteq U,$ hence $\varphi_U(x)\in \varphi_U((1/n)U).$

 This shows that $G_U$ is a UFSS group with distinguished neighborhood
 $\varphi_U((1/m)U)$.

(b)  Let ${\mathcal U}$ be a basis of neighborhoods of zero in $G$ formed
by GTG sets. The homomorphism
$$
\Phi: G\to \prod_{U\in {\mathcal U}}G_U,\quad \Phi(x)=(\varphi_U(x))_{U\in
{\mathcal U}}
$$
 is injective and continuous. Fix $U\in {\mathcal U,}$ and let $m\in \N$
 be such that $(1/m)U+(1/m)U\subseteq U.$ Then $(1/m)U+U_{\infty }\subseteq U,$
 from which we deduce
$$
\Phi(U) \supset \Phi(G) \cap \Big(\big(\prod_{U'\in {\mathcal U}
\setminus \{ U\}} G_{U'}\big)\times \varphi_U((1/m)U  ) \Big).
$$
This implies that $\Phi$ is open onto its image.

(c) It is clear that every supremum of UFSS topologies is both NSS and
locally GTG. Conversely, if $G$ is locally GTG, its topology is the
supremum of the family of topologies $\{{\mathcal T}_U\}_{U\in {\mathcal U}}$
where ${\mathcal U}$ is a basis of neighborhoods of zero. If moreover $G$ is
 NSS, we may assume that no neighborhood in ${\mathcal U}$ contains nontrivial
  subgroups, and in particular the topologies ${\mathcal T}_U$ are UFSS.

(d)
The sufficiency is obvious from Remark \ref{RemUFSS0} (a). To prove the
necessity let us assume that $\tau=\sup \tau_i$ is UFSS. Then
there exists a distinguished $\tau$-neighborhood  $W$ of 0 such that
$\tau={\mathcal T}_W$. There exists
a finite subset $J \subseteq I$ and $\tau_j$-neighborhoods $U_j$ of 0
for each $j\in J$ such that
$\bigcap_{j\in J}U_j\subseteq W$. We can assume without loss of generality
that $U_j$ is a distinguished
neighborhood  of 0 in $\tau_j$ for each $j\in J$. Then $(1/n)W\in  \sup_
{j\in J}\tau_j$ for every $n$.
Hence $\tau={\mathcal T}_W \leq \sup_{j\in J}\tau_j$. The inequality $\tau=
\sup_{j\in I}\geq \sup_{j\in J}\tau_j$ is trivial. This proves that $\tau=
\sup_{j\in I}\tau_j$.
\end{proof}

\begin{remark}\label{nssneeded}

Note that ``NSS"{} is needed in (c) above; any nonmetrizable compact
abelian group is locally GTG (see Example \ref{prec>Loc-GTG}(a))
%Proposition \ref{Bemerkung} below)
 but its topology is not a supremum of UFSS topologies.
\end{remark}

\begin{corollary} \label{complGTG} The class of locally GTG abelian
groups is stable under taking completions. \end{corollary}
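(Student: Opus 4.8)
The plan is to derive this from the embedding theorem \ref{Prop2GTG}(b) together with the stability of UFSS under completion (Proposition \ref{perm_prop_3}(a)) and the elementary permanence properties collected in Proposition \ref{permGTG}. Let $G$ be a locally GTG abelian group. First I would invoke Theorem \ref{Prop2GTG}(b) to fix a topological embedding $\Phi:G\hookrightarrow P:=\prod_{i\in I}P_i$, where each $P_i$ is UFSS; the map $\Phi$ constructed there is injective, continuous and open onto its image, hence genuinely an embedding. Next, using the standard fact that the Ra\u\i kov completion commutes with arbitrary products, I would identify $\wt{P}$ with $\prod_{i\in I}\wt{P_i}$.

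By Proposition \ref{perm_prop_3}(a) each $\wt{P_i}$ is again UFSS, since $P_i$ is a dense UFSS subgroup of its completion $\wt{P_i}$. Consequently $\wt{P}$ is a product of UFSS groups. Every UFSS group is locally GTG by Example \ref{Exampl0TG}(a), and products of locally GTG groups are locally GTG by Proposition \ref{permGTG}(c); hence $\wt{P}$ is locally GTG. It then remains to locate $\wt{G}$ inside $\wt{P}$: since $\wt{P}$ is complete and $\Phi(G)$ is a topological subgroup of it, the closure $\overline{\Phi(G)}$ taken in $\wt{P}$ is a complete group in which $\Phi(G)$ sits densely, so it realizes the completion $\wt{G}$. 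Thus $\wt{G}$ is (isomorphic to) a subgroup of the locally GTG group $\wt{P}$, and Proposition \ref{permGTG}(a) yields that $\wt{G}$ is locally GTG, which is exactly the claim.

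The only point requiring genuine care is the identification of $\wt{G}$ with the closure of $\Phi(G)$ in $\wt{P}$: one must check that the uniformity $G$ inherits as a subgroup of $P$ is precisely its own group uniformity, which is guaranteed by $\Phi$ being an embedding, and that for abelian (hence SIN) groups the two translation uniformities coincide, so that the closure of a dense subgroup in a complete group is indeed its Ra\u\i kov completion. Everything else is a direct application of results already established in the excerpt, so I expect no essential obstacle beyond this bookkeeping. (A fully self-contained alternative, avoiding the completion-commutes-with-products step, would instead mimic the closure argument of Proposition \ref{perm_prop_3}(a) directly on a basis of GTG neighborhoods of $G$, showing that the closures $\overline{U}$ of suitable GTG sets $U$ form a basis of GTG neighborhoods in $\wt{G}$; but the embedding route above is shorter and reuses the machinery just developed.)
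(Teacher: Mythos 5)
Your argument is correct and is essentially the paper's own proof: both embed $G$ into a product of UFSS groups via Theorem \ref{Prop2GTG}(b), pass to completions using Proposition \ref{perm_prop_3}(a), and conclude with Proposition \ref{permGTG}(c) and (a). The extra care you take in identifying $\wt{G}$ with the closure of $\Phi(G)$ inside the completed product is a legitimate point that the paper glosses over, but it does not change the route.
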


\begin{proof} By Theorem \ref{Prop2GTG}(b), every locally GTG group
$G$ can be embedded into a product $\prod_ i G_i$ of UFSS groups
$G_i$. By Proposition \ref{perm_prop_3} (a), the completion
$\wt{G}_i$ of the UFSS group $G_i$ is UFSS.  So the completion
$\wt{G}$ of $G$ embeds into the product $P=\prod_i \wt{G}_i$ of UFSS
groups. By Proposition \ref{permGTG} (c) $P$ is locally GTG, so $\wt{G}$
is locally GTG by Proposition \ref{permGTG} (a).
\end{proof}

\begin{theorem}\label{MJX} A Hausdorff abelian topological group
$(G,\tau)$ is a UFSS group if and only if $(G, \tau)$ is locally
minimal, locally GTG and NSS.
\end{theorem}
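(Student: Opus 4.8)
The statement splits into two implications. The forward one requires no new work: if $(G,\tau)$ is UFSS with distinguished neighborhood $U$, then it is $U$-locally minimal by Facts \ref{UFFS_locmin}(a), it is NSS by Facts \ref{UFFS_locmin}(b), and it is locally GTG by Example \ref{Exampl0TG}(a). So the plan concentrates on the converse, where all three hypotheses must be combined.

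For the sufficiency, the idea is to produce a single GTG neighborhood that is simultaneously a distinguished UFSS neighborhood and coarse enough to be pinned down by local minimality. First I would fix a neighborhood $V$ of $0$ witnessing local minimality of $(G,\tau)$, and, using the NSS hypothesis, a neighborhood $W$ of $0$ containing no nontrivial subgroup. Since $G$ is locally GTG, the GTG neighborhoods of $0$ form a basis, so I can choose a GTG set $U$ with $U\subseteq V\cap W$.

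The key point is to verify $U_\infty=\{0\}$. By Proposition \ref{definicion_bis_gtg} the set $U_\infty$ is a subgroup of $G$, and it is contained in $U\subseteq W$; the choice of $W$ then forces $U_\infty=\{0\}$. Since $U_\infty$ is the ${\cal T}_U$-closure of $\{0\}$ (again Proposition \ref{definicion_bis_gtg}), this means ${\cal T}_U$ is Hausdorff, and because the sets $(1/n)U$ form a neighborhood basis of $0$ for ${\cal T}_U$ by the very definition of a GTG set, the topology ${\cal T}_U$ is UFSS with distinguished neighborhood $U$ (Definition \ref{defenflo}). Moreover ${\cal T}_U$ is the coarsest group topology in which $U$ is a neighborhood, and $U$ is a $\tau$-neighborhood, so ${\cal T}_U\leq\tau$.

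It remains to identify ${\cal T}_U$ with $\tau$. Since $U\subseteq V$, the remark following Definition \ref{def:loc:min} shows that $U$ itself witnesses local minimality of $(G,\tau)$. As ${\cal T}_U$ is a Hausdorff group topology coarser than $\tau$ for which $U$ is a neighborhood of $0$, $U$-local minimality yields ${\cal T}_U=\tau$, whence $\tau$ is UFSS. The only genuinely nontrivial step is obtaining $U_\infty=\{0\}$: this is exactly where NSS and local GTGness are used together---local GTGness to supply a GTG neighborhood $U$ at all, and NSS to annihilate $U_\infty$---after which local minimality closes the argument almost formally.
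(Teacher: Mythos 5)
Your argument is correct and follows essentially the same route as the paper's proof: both directions use exactly Facts \ref{UFFS_locmin} and Example \ref{Exampl0TG}(a) for necessity, and for sufficiency both select a single GTG neighborhood $U$ that simultaneously witnesses local minimality and contains no nontrivial subgroup, then conclude ${\cal T}_U=\tau$ via Hausdorffness of ${\cal T}_U$ and $U$-local minimality. Your write-up merely makes explicit the intermediate steps (that $U_\infty=\{0\}$ forces ${\cal T}_U$ Hausdorff, and that smaller neighborhoods inherit the local-minimality witness) which the paper leaves implicit.
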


\begin{proof} Suppose that $(G, \tau)$ is a  UFSS group with distinguished
neighborhood  $U$.  Then $(G,\tau)$ is $U$--locally minimal
according to Facts \ref{UFFS_locmin}(a), locally GTG according to Example
\ref{Exampl0TG}(a) and $U$ does not contain any nontrivial subgroup.

Conversely, let $(G, \tau)$ be locally minimal, locally GTG and NSS.
There exists a neighborhood of zero $U$ which is a GTG set,
witnesses local minimality and does not contain nontrivial subgroups.
The group topology ${\mathcal T}_U$ generated by $U$ is Hausdorff and
coarser than $ \tau$; since $U$ is one of its zero neighborhoods, it
coincides with  $ \tau$.
\end{proof}

%%%%%%%%%%%%%%%%%%%%%%%%%%%%%%%%%%

\subsection{Locally minimal, locally GTG groups}\label{locmin_locgtg}

In this section we will give various properties of   locally minimal
locally GTG groups. Most of our results  are based on the following proposition which allows us to find large, in appropriate sense, minimal subgroups in a locally minimal group.

\begin{proposition}\label{minsubgr}({\rm \cite{DM}}) Let $G$ be a
$U$--locally minimal group  and let $H$ be a closed central subgroup
of $G$ such
that $H+V\subseteq U$ for some neighborhood $V$ of $0$ in $G$. Then $H$
is   minimal.
\end{proposition}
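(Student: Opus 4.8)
The plan is to show directly that $(H,\tau|_H)$ admits no strictly coarser Hausdorff group topology, by reusing the auxiliary construction from the proof of Proposition \ref{subgr}. So let $\sigma$ be an arbitrary Hausdorff group topology on $H$ with $\sigma\leq\tau|_H$; the goal is to prove $\sigma=\tau|_H$. As in \ref{subgr}, I would put on $G$ the family of sets
$$
\{W+V':\ W\in{\cal V}_{(H,\sigma)}(0),\ V'\in{\cal V}_{(G,\tau)}(0)\},
$$
which is a neighborhood basis at $0$ of a group topology $\tau'$ on $G$. This is exactly where the centrality of $H$ is used: for $W\subseteq H$ central one has $-g+W+g=W$ for every $g\in G$, so the basic sets are invariant under conjugation and $\tau'$ is a genuine (not necessarily abelian) group topology.

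Next I would record, verbatim from the proof of \ref{subgr}, the two formal properties of $\tau'$: it is coarser than $\tau$ (take $W=H$), and it is Hausdorff. For the latter, since $H$ is $\tau$-closed and $\sigma\leq\tau|_H$, one has $\overline{A}^\tau\subseteq\overline{A}^\sigma$ for every $A\subseteq H$; hence
$$
\overline{\{0\}}^{\tau'}=\bigcap_{W}\bigcap_{V'}(W+V')=\bigcap_{W}\overline{W}^\tau\subseteq\bigcap_{W}\overline{W}^\sigma=\{0\},
$$
the last equality holding because $\sigma$ is Hausdorff. These verifications are routine and already contained in \ref{subgr}, so I would simply cite them.

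The crucial and genuinely new step, where the hypothesis $H+V\subseteq U$ enters, is the observation that $U$ is a $\tau'$-neighborhood of $0$: indeed $H\in{\cal V}_{(H,\sigma)}(0)$ and $V\in{\cal V}_{(G,\tau)}(0)$, so $H+V$ is a basic $\tau'$-neighborhood of $0$, and $H+V\subseteq U$ gives $U\in{\cal V}_{(G,\tau')}(0)$. This is precisely what upgrades the conclusion from local minimality (as in \ref{subgr}) to full minimality: unlike there, no assumption on $\sigma$ is needed to place $U$ itself into the $\tau'$-filter. Now $U$-local minimality of $G$ applies to the Hausdorff topology $\tau'\leq\tau$ and forces $\tau'=\tau$. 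Restricting to $H$, every basic $\tau'$-neighborhood satisfies $W\subseteq(W+V')\cap H$, so its trace on $H$ contains the $\sigma$-neighborhood $W$ and is itself a $\sigma$-neighborhood; thus $\tau'|_H\leq\sigma$. Combining this with $\sigma\leq\tau|_H=\tau'|_H$ yields $\sigma=\tau|_H$, which proves that $H$ is minimal. The only delicate point is the verification that $\tau'$ is a group topology and Hausdorff, but that is exactly what \ref{subgr} already supplies, so the whole argument reduces to the one-line remark that $H+V\subseteq U$ forces $U$ into the $\tau'$-neighborhood filter.
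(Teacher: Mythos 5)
Your proof is correct. The paper itself gives no proof of Proposition \ref{minsubgr} (it is quoted from \cite{DM}), but your argument is the natural one and it works: you reuse verbatim the auxiliary topology $\tau'$ with basic neighborhoods $W+V'$ from the proof of Proposition \ref{subgr} (where centrality and closedness of $H$ are exactly what make $\tau'$ a Hausdorff group topology coarser than $\tau$), and the one genuinely new observation --- that taking $W=H$ gives $H+V\subseteq U$ as a basic $\tau'$-neighborhood, so $U$ lies in the $\tau'$-filter \emph{without any hypothesis on} $\sigma$ --- is precisely what upgrades the conclusion to full minimality of $H$ via $\tau'=\tau$ and $\sigma\leq\tau|_H=\tau'|_H\leq\sigma$.
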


\begin{theorem}\label{GTG+locMin}

If $G$ is a  $U$--locally minimal abelian group where $U$ is a GTG set,
then $U_\infty$ is a minimal subgroup.
\end{theorem}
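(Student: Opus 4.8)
The plan is to apply Proposition \ref{minsubgr} with $H := U_\infty$, so that the whole task reduces to verifying the hypotheses of that proposition: that $U_\infty$ is a closed central subgroup of $(G,\tau)$, and that there is a $\tau$-neighborhood $V$ of $0$ with $U_\infty + V \subseteq U$. Since $U$ is the witness of local minimality, everything will be arranged so that this same $U$ plays the role of the distinguished neighborhood in \ref{minsubgr}.

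The first point I would handle by comparing the two topologies in play. Because $U$ witnesses local minimality, $U$ is a $\tau$-neighborhood of $0$; as $U$ is a GTG set, ${\mathcal T}_U$ is by definition the coarsest group topology on $G$ for which $U$ is a neighborhood, whence ${\mathcal T}_U \le \tau$. Proposition \ref{definicion_bis_gtg} tells us that $U_\infty$ is the ${\mathcal T}_U$-closure of $\{0\}$, hence a ${\mathcal T}_U$-closed subgroup; and since ${\mathcal T}_U \le \tau$, every ${\mathcal T}_U$-closed set is $\tau$-closed, so $U_\infty$ is $\tau$-closed. Centrality is automatic, as $G$ is abelian.

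For the second point I would produce $V$ explicitly. Put $m := \gamma(U)$, which is finite precisely because $U$ is GTG (Proposition \ref{definicion_bis_gtg}), so that $(1/m)U + (1/m)U \subseteq U$. The set $V := (1/m)U$ is a basic ${\mathcal T}_U$-neighborhood of $0$, hence a $\tau$-neighborhood of $0$ by ${\mathcal T}_U \le \tau$. Since $U_\infty = \bigcap_{n\in\N} (1/n)U \subseteq (1/m)U = V$, I obtain $U_\infty + V \subseteq (1/m)U + (1/m)U \subseteq U$, which is exactly the inclusion required by Proposition \ref{minsubgr}. That proposition then yields that $U_\infty$ is minimal.

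I do not expect a serious obstacle here: once the comparison ${\mathcal T}_U \le \tau$ is established, both the $\tau$-closedness of $U_\infty$ and the existence of the neighborhood $V$ follow immediately, and the GTG-degree $m = \gamma(U)$ supplies the single inclusion $(1/m)U + (1/m)U \subseteq U$ that drives the estimate. The only point deserving care is bookkeeping of which topology each neighborhood or closedness claim refers to, since the argument repeatedly transfers information from ${\mathcal T}_U$ to $\tau$ via the inequality ${\mathcal T}_U \le \tau$.
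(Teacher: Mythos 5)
Your proof is correct and follows exactly the paper's route: apply Proposition \ref{minsubgr} to $H=U_\infty$ with $V=(1/m)U$, using $(1/m)U+(1/m)U\subseteq U$ from Proposition \ref{definicion_bis_gtg}. You merely spell out the details (the $\tau$-closedness of $U_\infty$ and the neighborhood status of $(1/m)U$ via ${\mathcal T}_U\le\tau$) that the paper leaves implicit.
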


\begin{proof} Proposition \ref{definicion_bis_gtg} implies $U_\infty+(1/m)U
\subseteq U$ for some $m\in \N$. Then, Proposition  \ref{minsubgr}
 immediately
gives us that $U_\infty$ is a minimal subgroup.
\end{proof}

%This corollary justifies our choice to consider mostly $U$--locally
%minimal abelian groups  where $U$ is a GTG neighborhood of $0$.

One may ask whether GTG is needed in the above corollary (see Question
\ref{LocMin_vs_locGTG}). The problem is that without this assumption,
the intersection $U_\infty$ need not be a subgroup (although it is always a
union of cyclic subgroups), as it happens in Example \ref{elecero}.

 It easily follows from Theorem \ref{GTG+locMin} that every locally minimal
locally GTG abelian group contains a minimal, hence precompact, $G_\delta$-subgroup
(note that the subgroup $U_\infty$ in Theorem \ref{GTG+locMin} is a $G_\delta$-set).
Now we provide a different proof of this fact, that makes no recourse to local GTG-ness.

\begin{proposition}\label{LAST_Proposition} Every locally minimal abelian group contains a minimal, hence precompact, $G_\delta$-subgroup.
\end{proposition}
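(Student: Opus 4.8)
The plan is to build, inside the given $U$--locally minimal abelian group $G$, a closed $G_\delta$--subgroup $H$ that sits inside $U$ in the precise sense required by Proposition \ref{minsubgr}, and then to invoke that proposition together with the Prodanov--Stoyanov theorem. The point of this argument (as opposed to Theorem \ref{GTG+locMin}, where one takes $H=U_\infty$) is that I cannot rely on $U_\infty$ being a subgroup, since local GTG--ness is no longer assumed; instead I manufacture an honest subgroup directly from a chain of neighborhoods.

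First I would set $V_0:=U$ and, using only that the topology is a group topology, recursively choose symmetric neighborhoods $V_{n+1}\in{\cal V}(0)$ with $V_{n+1}+V_{n+1}\subseteq V_n$ for every $n$. Put $H:=\bigcap_{n}V_n$, which is visibly a $G_\delta$--set. To see that $H$ is a subgroup I would use the standard chain argument: if $x,y\in H$ then $x,y\in V_{n+1}$ for each $n$, whence $x+y\in V_{n+1}+V_{n+1}\subseteq V_n$; combined with the symmetry of the $V_n$ and $0\in H$, this shows $H$ is a subgroup. It is closed because $\overline{V_{n+1}}\subseteq V_{n+1}+V_{n+1}\subseteq V_n$ for every $n$, so that $H=\bigcap_n V_n=\bigcap_n\overline{V_n}$ is an intersection of closed sets. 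Since $G$ is abelian, $H$ is automatically central.

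Next I would verify the hypothesis of Proposition \ref{minsubgr} with the neighborhood $V:=V_1$. Indeed $H\subseteq V_1$, so $H+V_1\subseteq V_1+V_1\subseteq V_0=U$. As $G$ is $U$--locally minimal and $H$ is a closed central subgroup with $H+V_1\subseteq U$, Proposition \ref{minsubgr} gives that $H$ is minimal. Finally, a minimal abelian group is precompact by the Prodanov--Stoyanov theorem, so $H$ is the desired minimal, precompact, $G_\delta$--subgroup.

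The only genuinely delicate point is checking that the neighborhood chain yields a bona fide \emph{closed subgroup} rather than a mere intersection of neighborhoods; this is exactly where the condition $V_{n+1}+V_{n+1}\subseteq V_n$ (and the ensuing $\overline{V_{n+1}}\subseteq V_n$) does all the work, replacing the GTG--ness that in the earlier argument guaranteed $U_\infty$ was a subgroup. Everything else is an immediate appeal to Proposition \ref{minsubgr} and to Prodanov--Stoyanov. Note that $H$ may well be trivial (for instance for $G=\R$ with $U=[-1,1]$, where the chain collapses to $\{0\}$), but this is harmless, since the statement only asserts the \emph{existence} of such a subgroup.
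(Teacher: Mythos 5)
Your proof is correct and follows essentially the same route as the paper: both construct a chain of symmetric neighborhoods with $V_{n+1}+V_{n+1}\subseteq V_n$ starting from the witness $U$, take $H=\bigcap_n V_n$, and apply Proposition \ref{minsubgr} together with Prodanov--Stoyanov. Your explicit verification that $H$ is closed (via $\overline{V_{n+1}}\subseteq V_{n+1}+V_{n+1}\subseteq V_n$) is a detail the paper leaves implicit, but the argument is the same.
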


\begin{proof} Let $U$ witness local minimality of the group $G$. As in the proof of Proposition \ref{loc_min+NSS>metr},  it is possible to construct inductively a sequence $(V_n)$ of symmetric
neighborhoods of $0$ in $\tau$ which satisfy $V_n+V_n\subseteq V_{n-1}$
(where $V_0:=U\cap -U$). It is easy to see that $H= \bigcap_{n\in\N}V_n$ is a subgroup of $G$, contained in each $V_n$.
In particular, $H + V_1 \subseteq V_0 \subseteq  U$.  Now Proposition  \ref{minsubgr} immediately gives us that $H$ is a minimal subgroup.
\end{proof}

Let us note that the minimal $G_\delta$-subgroup obtained in this proof is certainly contained in  the subgroup $U_\infty$, provided
$U$ is a GTG set (as $H\subseteq U$ and $U_\infty$ is the largest  subgroup contained in $U$). However, this argument has the advantage
to require weaker hypotheses.

The next corollary shows that non-metrizable complete locally minimal abelian groups contain large compact subgroups.

\begin{corollary}\label{completeLocMinLocGTG} Every complete locally minimal abelian group contains a compact $G_\delta$-subgroup.
\end{corollary}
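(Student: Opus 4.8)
The plan is to combine Proposition \ref{LAST_Proposition} with the completeness hypothesis. By Proposition \ref{LAST_Proposition}, the group $G$ contains a minimal (hence precompact) $G_\delta$-subgroup $H$. So it remains only to upgrade ``precompact'' to ``compact'', and this is exactly where completeness enters.

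First I would recall that a subgroup $H$ obtained in the proof of Proposition \ref{LAST_Proposition} is of the form $H=\bigcap_{n\in\N}V_n$, where $(V_n)$ is a descending sequence of (symmetric, closed, say) neighborhoods of $0$. In particular $H$ is a closed subgroup of $G$: as an intersection of closed sets it is closed. The key point is that a closed subgroup of a complete group is itself complete in the induced topology. Thus $H$ is both precompact \emph{and} complete. A precompact group is one whose Ra\u\i kov completion $\wt H$ is compact; but since $H$ is already complete, $H=\wt H$, so $H$ itself is compact.

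The one subtlety I would be careful about is ensuring that $H$ is genuinely closed, which requires the $V_n$ in the construction to be taken closed (or at least that their intersection be closed). This is harmless: in any topological group one may always shrink each neighborhood to a closed symmetric one while preserving the defining inclusions $V_n+V_n\subseteq V_{n-1}$, so the inductive construction of Proposition \ref{LAST_Proposition} can be carried out with closed $V_n$, making $H=\bigcap_n V_n$ a closed $G_\delta$-subgroup. Once closedness is in hand, the implication ``precompact $+$ complete $\Rightarrow$ compact'' is immediate from the definition of precompactness via the Ra\u\i kov completion recalled in the Notation section.

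I do not expect any real obstacle here; the corollary is essentially a formal consequence of Proposition \ref{LAST_Proposition} together with the elementary facts that closed subgroups of complete groups are complete and that a complete precompact group is compact. The only thing to watch is the bookkeeping that guarantees $H$ is closed, which as noted above is automatic once the neighborhoods in the construction are chosen closed.
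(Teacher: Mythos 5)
Your proof is correct and follows exactly the route the paper intends: its own proof is just ``Follows directly from Proposition \ref{LAST_Proposition}'', and your argument (closed $+$ complete ambient $\Rightarrow$ complete, precompact $+$ complete $\Rightarrow$ compact) is the natural unpacking of that one-liner. As a minor simplification, you need not shrink the $V_n$ to closed sets: since $\overline{V_n}\subseteq V_n+V_{n+1}\subseteq V_{n-1}$, the intersection $H=\bigcap_n V_n$ is automatically closed.
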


\begin{proof} Follows directly from Proposition \ref{LAST_Proposition}.
%subgroup $U_\infty$ is a $G_\delta$-set.
\end{proof}

\begin{corollary}\label{lin_top} Let $(G,\tau)$ be either
\begin{itemize}
   \item[(a)] a linearly topologized abelian group, or
   \item[(b)] a bounded locally GTG abelian group.
\end{itemize} Then $G$ is locally minimal iff $G$ has an open minimal subgroup.
\end{corollary}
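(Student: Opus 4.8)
The plan is to prove both implications, handling the two hypotheses (a) and (b) as much as possible in parallel, since in each case $G$ will turn out to be locally GTG and the real content reduces to manufacturing a single GTG neighborhood whose $U_\infty$ is at once minimal and open. The backbone of the argument is Theorem \ref{GTG+locMin}, which provides minimality, combined with the structural facts that make $U_\infty$ open in each of the two settings.

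For the easy direction I would argue uniformly. Suppose $G$ has an open minimal subgroup $V$. Then $V$ is locally minimal by Example \ref{Exa_LocMin}(a), and being an open locally minimal subgroup, Proposition \ref{locally minimal open subgroup} immediately gives that $G$ is locally minimal. This uses neither (a) nor (b), so it applies to both cases at once.

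For the converse, assume $G$ is locally minimal and fix a neighborhood $U$ of $0$ witnessing local minimality. My first move is to replace $U$ by a GTG witness $W\subseteq U$; since every smaller neighborhood witnesses local minimality as well, $W$ will still do so. In case (a), being linearly topologized yields an open subgroup $W\subseteq U$, and $W$ is a GTG set by Example \ref{ejemplosgtg}(b). In case (b), $G$ is locally GTG by hypothesis, so I may choose a GTG neighborhood $W\subseteq U$ from the given basis. In either situation $G$ is $W$-locally minimal with $W$ a GTG set, so Theorem \ref{GTG+locMin} tells me that $W_\infty$ is a minimal subgroup.

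It remains to show that $W_\infty$ is open, and this is where (a) and (b) diverge and where the one genuinely delicate point lies. In case (a), $W$ is a subgroup, hence $(1/n)W=W$ for every $n$ and so $W_\infty=W$ is already open. In case (b), writing $m$ for the exponent of $G$, Example \ref{Exampl0TG}(c) gives $W_\infty=(1/m)W$ is a neighborhood of $0$; since $W_\infty$ is a subgroup by Proposition \ref{definicion_bis_gtg}, and a subgroup that is a neighborhood of $0$ is automatically open, $W_\infty$ is open. In both cases $W_\infty$ is therefore an open minimal subgroup, which completes the argument. The main obstacle is precisely marrying the minimality of $W_\infty$ (from Theorem \ref{GTG+locMin}) with its openness: boundedness in (b) is exactly the feature forcing the descending chain $(1/n)W$ to stabilize at the neighborhood $(1/m)W=W_\infty$, whereas in (a) the subgroup structure trivializes the matter since $W_\infty=W$.
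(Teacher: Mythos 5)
Your proof is correct and follows essentially the same route as the paper: the easy direction via Proposition \ref{locally minimal open subgroup}, and the converse by producing a GTG witness $W$ of local minimality whose $W_\infty$ is minimal by Theorem \ref{GTG+locMin} and open either because $W_\infty=W$ (case (a)) or because $(1/m)W=W_\infty$ for the exponent $m$ (case (b)). The only cosmetic difference is that in case (a) the paper applies Proposition \ref{minsubgr} directly to the open subgroup $V$ (using $V+V\subseteq V$), whereas you route through Theorem \ref{GTG+locMin} after observing that a subgroup is a GTG set — which amounts to the same argument.
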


\begin{proof} If $G$ has an open minimal subgroup, then $G$ is locally minimal
(Proposition \ref{locally minimal open subgroup}). Conversely, suppose that $G$ is
locally minimal.

(a) Let $V$ be an open subgroup of $G$ witnessing local minimality of $G$.
Then $V+V\subseteq V$, so $V$ is minimal by Proposition \ref{minsubgr}.

(b)  Let $G$ be $U$--locally minimal for a GTG neighborhood $U$.
According to Theorem \ref{GTG+locMin}, $U_\infty$ is a minimal
subgroup of $G$. For the exponent $m$   of $G$, we obtain
$(1/m)U=U_\infty$ and hence $U_\infty$ is open.
\end{proof}

If the algebraic structure of a group is sufficiently well
understood, Theorem \ref{GTG+locMin} helps to characterize locally
minimal group topologies. As an example we describe the locally
minimal locally GTG   topologies on $\Z$. Let us recall that the
minimal topologies on $\Z$ are precisely the $p$-adic ones (Prodanov
\cite{P1}).

\begin{example}\label{Ex_LA1} Let $(\Z,\tau)$ be  a locally minimal locally
GTG  group topology. Then either

\begin{itemize}
\item[(a)] it is UFSS; or
\item[(b)] $(\Z,\tau)$ has an open minimal subgroup; more precisely,
there exists a prime number $p$ and $n\in\Z$ such that $(np^m\Z)_{m\in\N}$
forms a neighborhood basis of the neutral element.
\end{itemize}
 Indeed, if $\tau$ is not UFSS Theorem \ref{MJX} gives that it is not NSS, and then,
 Example \ref{Zlinear}(d)
 says that $\tau$ is a non-discrete linear topology.
 We
 apply now Corollary \ref{lin_top} and we obtain that $G$ contains an open
 minimal subgroup $N$. Let $N=n\Z$ for some $n\not=0$. Then  the minimality of
  $N$ implies that for a suitable prime $p$, a neighborhood basis of $0$ in
 $n\Z$ is given by the sequence of subgroups $(np^m\Z)_{m\in\N}$ ((2.5.6) in
  \cite{DPS}).
\end{example}

\begin{proposition}\label{Do1loc}
Products of locally minimal (abelian precompact) groups are in
general not locally minimal, namely the group of integers with the
$2$-adic topology  $(\Z,\tau_2)$ is minimal and hence locally
minimal, but the product $(\Z,\tau_2)\times (\Z,\tau_2)$ is not
locally minimal.
\end{proposition}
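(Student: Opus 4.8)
The plan is to split the statement into its two assertions. That $(\Z,\tau_2)$ is minimal (hence locally minimal by Example~\ref{Exa_LocMin}(a)) is Prodanov's theorem, recorded in \cite[2.5.6]{DPS}, so I would simply quote it. All the work lies in showing that the square is not locally minimal, and for this I would pass to the Ra\u\i kov completion $\wt{G}=\Z_2\times\Z_2$ of $G=\Z\times\Z$, in which $G$ sits as a dense subgroup. I would first record that the subgroups $W_n=(2^n\Z_2)^2$ form a neighbourhood basis at $0$ in $\wt G$, so that the sets $U_n:=G\cap W_n=(2^n\Z)^2$ form a neighbourhood basis at $0$ in $(G,\tau_2\times\tau_2)$.

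The core is to prove that no $U_n$ witnesses local minimality. Fix $n$ and choose a $2$-adic integer $\alpha\in\Z_2\setminus\Q$ (possible since $\Z_2$ is uncountable while $\Q\cap\Z_2$ is countable), and set
$$
N:=\{(x,\alpha x):x\in 2^n\Z_2\}.
$$
This is a nontrivial closed subgroup of $\wt G$ (the image of the compact group $2^n\Z_2$ under the continuous homomorphism $x\mapsto(x,\alpha x)$), and since $2^n\Z_2$ is an ideal of $\Z_2$ one has $N\subseteq W_n$. The key algebraic point is that $N\cap G=\{0\}$: if $(a,b)\in N\cap(\Z\times\Z)$ with $a\ne 0$, then $a\in 2^n\Z$ and $\alpha=b/a\in\Q$, contradicting the choice of $\alpha$. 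Let $q\colon\wt G\to\wt G/N$ be the quotient map onto the (Hausdorff, compact) group $\wt G/N$; then $q\restriction_G$ is an injective continuous homomorphism, and I would let $\sigma_n$ be the precompact Hausdorff group topology it induces on $G$, so that by construction $\sigma_n\le\tau_2\times\tau_2$.

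Two facts then finish the argument. First, $U_n$ is a $\sigma_n$-neighbourhood of $0$: since $N\subseteq W_n$ and $W_n$ is a subgroup, $q^{-1}(q(W_n))=W_n+N=W_n$, whence $U_n=G\cap q^{-1}(q(W_n))$ is a $\sigma_n$-neighbourhood. Second, $\sigma_n$ is strictly coarser than $\tau_2\times\tau_2$: were they equal, $q\restriction_G$ would be a topological embedding of $(G,\tau_2\times\tau_2)$ into $\wt G/N$, and passing to completions its extension --- which must be $q$ by density --- would be a topological isomorphism $\wt G\to\wt G/N$, contradicting $\ker q=N\ne\{0\}$. Thus $U_n$ fails to witness local minimality. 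Finally, invoking the remark following Definition~\ref{def:loc:min} (a smaller neighbourhood inherits the witnessing property): if $(G,\tau_2\times\tau_2)$ were locally minimal, witnessed by some $V$, then some $U_n\subseteq V$ would witness it too, a contradiction; hence the product is not locally minimal.

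I expect the main obstacle to be the construction of $N$, which must simultaneously be nontrivial, sit inside the prescribed small neighbourhood $W_n$, and meet the dense subgroup $G$ trivially. The use of an irrational $\alpha\in\Z_2$ is exactly what reconciles these requirements, and the verification $N\cap G=\{0\}$ is the crucial step; everything else is a routine manipulation of quotients of the completion.
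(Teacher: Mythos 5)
Your proof is correct, but it takes a genuinely different route from the paper's. The paper argues by contradiction in three lines: if the product were $U$-locally minimal one may assume $U=2^n\Z\times 2^n\Z$, which is a closed subgroup satisfying $U+U\subseteq U$, so Proposition \ref{minsubgr} forces $U$ to be \emph{minimal}; since $U$ is topologically isomorphic to $(\Z,\tau_2)\times(\Z,\tau_2)$, this contradicts Do\"\i tchinov's theorem \cite{Do1} that this product is not minimal. That argument is very short but leans on two external inputs: the Dikranjan--Morris criterion \ref{minsubgr} and the non-minimality of the square. You instead work directly from the definition, exhibiting for each basic neighbourhood $U_n=(2^n\Z)^2$ a strictly coarser Hausdorff group topology $\sigma_n$ in which $U_n$ is still a neighbourhood of $0$; the key device --- a closed subgroup $N=\{(x,\alpha x):x\in 2^n\Z_2\}$ of the completion with $\alpha\in\Z_2\setminus\Q$, sitting inside $W_n$ and meeting $\Z\times\Z$ trivially --- is essentially Do\"\i tchinov's original construction, which you have unpacked and adapted to an arbitrary member of the neighbourhood base. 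All the steps check out: the verification $N\cap G=\{0\}$, the openness of $q(W_n)$ giving $U_n\in{\cal V}_{\sigma_n}(0)$, the completion argument showing $\sigma_n\neq\tau_2\times\tau_2$, and the final reduction via the remark after Definition \ref{def:loc:min}. What your approach buys is self-containedness (it reproves the needed instance of Do\"\i tchinov's theorem rather than citing it) at the cost of length; what the paper's approach buys is brevity and a reusable pattern (Proposition \ref{minsubgr} converts local minimality into minimality of small closed subgroups, which is how the paper handles several other examples as well).
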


\begin{proof}
Suppose that $(\Z,\tau_2)\times (\Z,\tau_2)$ is $U$--locally
minimal. We may assume that $U=2^n\Z\times 2^n\Z$. By
\ref{minsubgr}, the closed subgroup $U$ is minimal. But $U$ is
topologically isomorphic to $(\Z,\tau_2)\times (\Z,\tau_2),$ which
yields a contradiction.
\end{proof}

 According to Corollary \ref{lin_top}(b) the bounded locally minimal
locally GTG abelian groups have an open minimal subgroup. Now we use
this fact to describe the bounded abelian groups that support a
non-discrete locally
 minimal and locally GTG group topology:

\begin{theorem}\label{NewThm1}
 Let $G$ be a bounded abelian group. Then the
 following assertions are equivalent:
\begin{itemize}
  \item[(a)] $|G|\geq \cont$;
  \item[(b)] $G$ admits a non-discrete locally  minimal and locally
            GTG group topology;
  \item[(c)] $G$ admits a non-discrete locally  compact metrizable group
            topology.
\end{itemize}
\end{theorem}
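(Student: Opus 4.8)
The plan is to prove the cycle of implications (c) $\Rightarrow$ (b) $\Rightarrow$ (a) $\Rightarrow$ (c). The implication (c) $\Rightarrow$ (b) is immediate from examples already established: a non-discrete locally compact metrizable topology on $G$ is locally minimal by Example \ref{Exa_LocMin}(b) and locally GTG by Example \ref{prec>Loc-GTG}(b), so it witnesses (b). The remaining two implications carry the content.

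For (b) $\Rightarrow$ (a) I would start from a non-discrete, locally minimal, locally GTG topology $\tau$ on the bounded abelian group $G$ and invoke Corollary \ref{lin_top}(b) to produce an open minimal subgroup $N$ of $(G,\tau)$. Since $\tau$ is non-discrete and $N$ is open, $N$ cannot be discrete (otherwise $\{0\}$ would be $\tau$-open), so $(N,\tau|_N)$ is an infinite minimal group; being abelian it is precompact by the Prodanov--Stoyanov theorem, and its completion $K$ is compact, again bounded (the exponent $m$ of $G$ gives $mN=0$, hence $mK=0$), and infinite. Now $N$ is a dense minimal subgroup of $K$, hence essential in $K$ by the density criterion for minimality \cite[Theorem 2.5.1]{DPS}. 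Because $K$ is bounded, every nontrivial closed subgroup contains a copy of $\Z(p)$ for some prime $p$, so these $\Z(p)$'s are exactly the minimal nontrivial closed subgroups; essentiality then forces $Soc(K)\subseteq N$. Finally, an infinite bounded compact abelian group satisfies $|Soc(K)|\geq \cont$ (some $K[p]$ is an infinite product $\Z(p)^\lambda$ with $\lambda\geq \aleph_0$, of cardinality $\geq\cont$), whence $|G|\geq |N|\geq |Soc(K)|\geq \cont$.

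For (a) $\Rightarrow$ (c) I would use the structure theory of bounded groups. By Pr\"ufer's theorem $G=\bigoplus_{j}\Z(p_j^{k_j})^{(\kappa_j)}$ with finitely many distinct prime powers $p_j^{k_j}$; since $|G|\geq\cont$, some $\kappa_{j_0}\geq\cont$. Writing $q^k=p_{j_0}^{k_{j_0}}$ and splitting off $\cont$ of the coordinates, I obtain a decomposition $G=K\oplus D$ with $K\cong\Z(q^k)^{(\cont)}$. The key algebraic observation is that $\Z(q^k)^{(\cont)}\cong\Z(q^k)^{\omega}$ as abstract groups: both are bounded of exponent $q^k$, and a short computation (namely $\Z(q^k)^{\omega}[q]=q^{k-1}\Z(q^k)^{\omega}$, so all cyclic summands have order $q^k$, and their number is $\cont$) shows they have identical Ulm--Kaplansky invariants. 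Transporting to $K$ the compact metrizable non-discrete topology of $\Z(q^k)^{\omega}$ and giving $D$ the discrete topology, the product topology on $G=K\times D$ is non-discrete, metrizable (a product of two metrizable groups), and locally compact (the sets $K\times\{d\}$ are compact open), which is exactly (c).

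The step I expect to be the main obstacle is the cardinality jump in (b) $\Rightarrow$ (a): converting the purely topological hypothesis into the bound $|G|\geq\cont$. The delicate point is that an infinite minimal abelian group need not be large in general (e.g. $(\Z,\tau_p)$ is countable), so boundedness must be used essentially; it enters precisely when passing to the completion $K$ and identifying the minimal nontrivial closed subgroups with the copies of $\Z(p)$, which is what upgrades essentiality of $N$ to the containment $Soc(K)\subseteq N$ and thereby forces $|N|\geq\cont$. I would take particular care to justify that the socle of an infinite bounded compact abelian group really has size at least $\cont$, and that essentiality of a dense subgroup of such a $K$ is equivalent to containing $Soc(K)$.
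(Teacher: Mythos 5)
Your proof is correct, and its skeleton --- the cycle (a)$\Rightarrow$(c)$\Rightarrow$(b)$\Rightarrow$(a), Pr\"ufer's theorem plus the isomorphism $\bigoplus_{\cont}\Z(q^k)\cong\Z(q^k)^{\omega}$ for (a)$\Rightarrow$(c), the citation of Examples \ref{Exa_LocMin}(b) and \ref{prec>Loc-GTG}(b) for (c)$\Rightarrow$(b), and Corollary \ref{lin_top}(b) producing an open minimal subgroup for (b)$\Rightarrow$(a) --- coincides with the paper's. The one genuine divergence is how (b)$\Rightarrow$(a) is finished. The paper argues by contradiction: if $|G|<\cont$ then the open minimal subgroup $H$ satisfies $r_p(H)<\infty$ for all primes $p$ by \cite[Cor.~5.1.5]{DPS}, hence $H$ is finite (being bounded) and $G$ is discrete. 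You instead give a direct argument: $N$ is infinite minimal, hence precompact by Prodanov--Stoyanov; its compact completion $K$ is bounded and infinite; the density criterion \cite[Theorem 2.5.1]{DPS} makes $N$ essential in $K$; essentiality plus boundedness forces $Soc(K)\subseteq N$; and $|Soc(K)|\ge\cont$ because some $K[p]$ is an infinite compact group. This is essentially an unpacking of the proof of the $p$-rank corollary the paper cites; it costs a few more lines but makes the step self-contained and exposes exactly where boundedness enters (identifying the minimal nontrivial closed subgroups of $K$ with copies of $\Z(p)$, and guaranteeing that an infinite bounded compact abelian group has socle of size at least $\cont$). Your Ulm--Kaplansky computation in (a)$\Rightarrow$(c) also fills in an isomorphism that the paper merely asserts.
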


\begin{proof} To prove the implication (a) $\Rightarrow$ (c)  use Pr\" ufer's
theorem to deduce that $G$ is a direct sum of cyclic subgroups.
Since $G$ is bounded, there exists an $m>1$ such that $G$ has as a direct
summand a subgroup $H \cong \bigoplus_\cont \Z(m)\cong \Z(m)^\omega$.
Since  $\Z(m)^\omega$ carries a metrizable compact group topology, one can
build a non-discrete locally  compact metrizable group topology on $G$ by
putting on $H$ the topology transported by the isomorphism $H\cong \Z(m)^\omega$
and letting $H$ to be an open subgroup of $G$.

(c) $\Rightarrow$ (b) Let $\tau$ be a non-discrete locally  compact
group topology on the group $G$. According to \ref{Exa_LocMin}(b)
and \ref{prec>Loc-GTG}, $\tau$ is locally minimal and locally GTG.

(b) $\Rightarrow$ (a) Assume that $|G|<\cont$. By Corollary
\ref{lin_top} there exists an open minimal subgroup $H$ of $G$. As
$|H|<\cont$, we conclude that $r_p(H)<\infty$ for all primes $p$
(see \cite[Cor. 5.1.5]{DPS}). Since $H$ is a bounded abelian group
we conclude that  $H$ is finite. Since $H$ is open in $G$, $G$ is
discrete, a contradiction.
\end{proof}

\section{Open questions}
\begin{question}\label{QclGTG}
Is the closure of every GTG set in a topological group again a GTG set?
\end{question}

\begin{question}\label{LocMin_vs_locGTG}  Is every locally minimal abelian group
necessarily locally GTG? \end{question}

According to Theorem \ref{NewThm1}, for a negative answer to Question
\ref{LocMin_vs_locGTG} it suffices to build a non-discrete locally minimal
group topology on an infinite bounded abelian group of size $< \cont$.
To emphasize  better the situation let us formulate this question in the
following  very specific case:

\begin{question}\label{LocMin_vs_locGTG2} Does the infinite Boolean group
$\bigoplus_\omega \Z(2)$ admit a non-discrete locally minimal  group topology?  A positive answer to this question implies a negative answer to Question
 \ref{LocMin_vs_locGTG}.
\end{question}

Actually, the following weaker version of Question
\ref{LocMin_vs_locGTG} will still be useful for Theorem
\ref{GTG+locMin}:

\begin{question}\label{LocMin_vs_locGTG1} If $G$ is a $U$-locally minimal
abelian group for some $U \in
{\mathcal V}(0),$ does there exist a GTG neighborhood of $0$ contained in $U$?
\end{question}

Theorem \ref{MJX} suggests also another weaker version of Question
\ref{LocMin_vs_locGTG}:

\begin{question}\label{LocMin_vs_locGTG3}  Is every locally minimal NSS abelian
group necessarily locally GTG? \end{question}A positive answer to this question will modify the equivalence proved in Theorem
\ref{MJX} to equivalence between UFSS and  the conjunction of local minimality
and NSS.

\begin{remark}\label{elecero_nolocmin} Proposition \ref{minsubgr} shows that the space in Example
\ref{elecero} cannot provide an answer to Question
\ref{LocMin_vs_locGTG}, since actually it is not locally minimal.
[Suppose that for some $\varepsilon\in (0,1),$ $W_{\varepsilon}$
witnesses local minimality of $L^0.$ Let $h$ be the characteristic function
of $[0,\varepsilon/2],$ and $H$ the subgroup $\langle h
\rangle$ of $L^0.$ $H$ is discrete, hence it cannot be minimal; however,
$H\subseteq W_{\varepsilon/2}$ and thus
$H+W_{\varepsilon/2}\subset W_{\varepsilon},$ which contradicts Prop.
\ref{minsubgr}.]
\end{remark}

\begin{question}\label{ques:NSS}
Is every locally minimal NSS group metrizable?
According to Proposition \ref{loc_min+NSS>metr}, this is true for abelian groups.\end{question}

%\NB \footnote{Here used to be Open question 5.8 resolved in the strongets possible way by Lydia's
%powerful construction. D.D.}

 The next question is related to Proposition \ref{subgr}  and Corollary \ref{Coro_}:

\begin{question}\label{LAST_QUES}
Let $H$ be a closed subgroup of a (locally) minimal group $G$. Is then $H$ necessarily locally minimal ?

%%%%%%%%%%%%%%%%%%%%%%%%%%%%
\end{question}

\end{document}